\documentclass[11pt,reqno]{amsproc}
\usepackage[margin=1in]{geometry}
\usepackage{amsmath, amsthm, amssymb}
\usepackage[colorlinks=true, pdfstartview=FitV, linkcolor=blue,citecolor=blue, urlcolor=blue,bookmarksdepth=2]{hyperref}
\usepackage[abbrev,lite,nobysame]{amsrefs}
\usepackage[usenames,dvipsnames]{color}
\usepackage{mathtools}


\title[Convergence of non-autonomous attractors for subquintic  wave equation]{Convergence of non-autonomous attractors for subquintic weakly damped wave equation}
\author{Jakub Bana\'{s}kiewicz, Piotr Kalita}
\address{Faculty of Mathematics and Computer Science, Jagiellonian University, ul. Łojasiewicza 6, 30-348, Kraków}
\email{Jakub.Banaskiewicz@im.uj.edu.pl, piotr.kalita@ii.uj.edu.pl}
\thanks{This work was supported by National Science Center (NCN) of Poland under projects No. DEC-2017/25/B/ST1/00302 and UMO-2016/22/A/ST1/00077.}

\date{\today}

\newcommand{\norm}[1]{\|{#1}\|}
\newcommand{\set}[1]{\{{#1}\}}

\newcommand{\ut}[0]{u_{t}}
\newcommand{\dt}[0]{\frac{d}{dt}}
\newcommand{\R}[0]{\mathbb{R}}
\newcommand{\CC}[0]{C(\mathbb{R};C^1(\mathbb{R}))}
\newcommand{\dist}[0]{\text{dist}}
\newcommand{\partu}[1]
{\frac{\partial {#1}}
{\partial u}
}

\newtheorem{theorem}{Theorem}[section]
\newtheorem{corollary}{Corollary}[theorem]
\newtheorem{lemma}[theorem]{Lemma}
\newcommand{\fraclap}[2]{(-\Delta)^{\frac{{#1} }{{#2} }}}
\newtheorem{proposition}{Proposition}[section]
\theoremstyle{definition}
\newtheorem{definition}[proposition]{Definition}
\newtheorem{remark}[proposition]{Remark}

\numberwithin{equation}{section}

\begin{document}
\begin{abstract}
We study the non-autonomous weakly damped wave equation with subquintic growth condition on the nonlinearity. Our main focus is the class of Shatah--Struwe solutions, which satisfy the Strichartz estimates and are coincide with the class of solutions obtained by the Galerkin method. For this class we show the existence and smoothness of pullback, uniform, and cocycle attractors  and the relations between them. We also prove that these non-autonomous attractors converge upper-semicontinuously to the global  attractor for the limit autonomous problem if the time-dependent nonlinearity tends to time independent function in an appropriate way.

\end{abstract}

\maketitle
\section{Introduction.} In this paper we are interested in the existence, regularity and upper-semicontinuous convergence of pullback, uniform and cocycle attractors of the problems governed by the following weakly damped wave equations
\begin{equation}\label{eq:nonauto}
u_{tt} + u_t - \Delta u = f_\varepsilon(t,u).
\end{equation}
We prove that these attractors converge as $\varepsilon\to 0$ to the global attractor of the problem governed by the limit autonomous equation
\begin{equation}\label{eq:auto}
u_{tt} + u_t - \Delta u = f_0(u),
\end{equation}
where $f_\varepsilon \to f_0$ in appropriate sense.
The unknowns are the functions $u:[t_0,\infty) \times \Omega \to \R$, where $\Omega$ is an open and bounded domain with smooth boundary.

The theory of global attractors for the wave equation with damping term $u_t$ has been developed by Babin and Vishik \cite{Babin-Vishik-1983}, Ghidaglia and Temam \cite{Ghidaglia_Temam}, Hale \cite{Hale_1985}, Haraux \cites{Haraux1, Haraux2}, Pata and Zelik \cite{PataZelik}. Overview of the theory can be found, among others, in the monographs of Babin and Vishik \cite{Babin_Vishik}, Haraux \cite{Har_book}, Chueshov and Lasiecka \cite{Chueshov_Lasiecka}. We also mention the classical monographs of Henry \cite{Henry-1981}, Hale \cite{Hale-1988}, Robinson \cite{Robinson}, Temam \cite{Temam}, and D\l{}otko and Cholewa \cite{Dlotko_Cholewa} on infinite dimensional autonomous dynamical systems. Various types of non-autonomous attractors and their properties has been studied, among others, by Chepyzhov and Vishik \cite{Chepyzhov_Vishik}, Cheban \cite{Cheban}, Kloeden and Rasmussen \cite{Kloeden}, Carvalho, Langa, and Robinson \cite{Carvalho-Langa-Robinson-2012}, Chepyzhov \cite{Chepyzhov-2013}, and Bortolan, Carvalho and Langa \cite{BoCaL}.

The existence of the global attractor for \eqref{eq:auto} with the cubic growth condition 
\begin{equation}\label{eq:growth_3}
|f_0(s)| \leq C(1+|s|^3),
\end{equation}
has been obtained by Arrieta, Carhalho, and Hale in \cite{Arrieta-Carvalho-Hale-1992}. This growth exponent had long been considered as critical. In 2016 Kalantarov, Savostianov and Zelik \cite{Savostianov} used the findings on the Strichartz estimates for the wave equation on bounded domains \cites{Burq, Blair} to obtain the global attractor existence for the so called Shatah--Struwe solutions of quintic weakly damped wave equation, i.e. where  the exponent $3$ in  \eqref{eq:growth_3} is replaced by $5$. These findings led to the rapid development of the theory for weakly damped wave equation with supercubic growth.
In particular, global attractors for Shatah--Struwe solutions for supercubic case  with forcing in $H^{-1}$ have been studied by Liu, Meng, and Sun \cite{LMS}, and the exponential attractors were investigated by Meng and Liu in \cite{Meng_Liu}. We also mention the work \cite{Carvalho-Chol-Dlot-2009} of Carvalho, Cholewa, and D\l{}otko who obtained an existence of the weak global attractor for a concept of solutions for supercubic but subquintic case. 
Finally, the results on attractors for autonomous problems with supercubic nonlinearities have been generalized to the case of damping given by the fractional Laplacian in the subquintic case in \cite{Savo2} and in the quintic case in \cite{Savo1}. 

For a non-autonomous dynamical system there exist several important concepts of attractors: the pullback attractor, a time-dependent family of compact sets attracting ``from the past" \cites{Carvalho-Langa-Robinson-2012, Kloeden}, the uniform attractor, the minimal compact set attracting forwards in time uniformly with respect to the driven system of non-autonomous terms \cite{Chepyzhov_Vishik}, and the cocycle attractor which, in a sense unifies and extends the two above concepts \cites{Kloeden, Langa}. An overview of these notions can be found in the review article \cite{balibrea}.  Recent intensive research on the characterization of pullback attractors and continuity properties for PDEs \cites{Carvalho-Langa-Robinson-2012, Kloeden, Langa} has led to the results on the link between the notions of uniform, pullback, and cocycle attractors, namely an internal characterization of a uniform attractor as the union of the pullback attractors related to all their associated symbols (see \cite{Langa}, and Theorem \ref{NoAtr-Rel} below), and thus allowing to define the notion of lifted invariance (see \cite{Langa}, and Definition \ref{lifted} and Theorem \ref{lifted-inv} below) on uniform attractors. 

There are several recent results on nonautonomous version of weakly damped wave equation with quintic, or at least supercubic, growth condition which use the concept of  Shatah--Struwe solutions. Savostianov and Zelik in the article \cite{Savostianov_measure} obtain the existence of the uniform attractor for the problem governed by 
$$
u_{tt} + u_t + (1 - \Delta) u + f(u) = \mu(t),
$$
on the three dimensional torus, where $\mu(t)$ can be a measure. Mei, Xiong, and Sun \cite{mcs} obtain the existence of the pullback attractor for the problem governed by the equation 
\begin{equation}\label{auto}
u_{tt} + u_t - \Delta u + f(u) = g(t),
\end{equation}
for the subquintic case on the space domain given by whole $\R^3$ in the so called locally uniform spaces. Mei and Sun \cite{Mei_Sun} obtain the existence of the uniform attractor for non translation compact forcing for the problem governed by
\eqref{auto} with subquintic $f$. Finally, Chang, Li, Sun, and Zelik \cite{clsz} consider the problem of the form 
$$
u_{tt} + \gamma(t)u_t-\Delta u + f(u) = g,
$$
and show the existence of several types of nonautonomous attractors with quintic nonlinearity  for the case where the damping may change sign. None of these results consider the nonlinearity of the form $f(t,u)$ and none of these results fully explore the structure of non-autonomous attractors and relation between pullback, uniform, and cocycle attractors characterized in \cite{Langa}. The present paper aims to fill this gap.

In this article we generalize the results of \cite{Savostianov} to the problem governed by the weakly damped nonautonomous wave equation  \eqref{eq:nonauto} with the semilinear term $f_\varepsilon(t,u)$ which is a perturbation of the autonomous nonlinearity $f_0(u)$, cf. assumptions \hyperref[as:H2]{(H2)} and \hyperref[as:H3]{(H3)} in Section \ref{sec:3}. We stress that we deal only with the case of the  subquintic growth
$$
\left|\frac{\partial f_\varepsilon}{\partial u}(t,u)\right| \leq C(1+|u|^{4-\kappa}), \
$$
for which we prove the results on the existence and asymptotic smoothness of Shatah--Struwe solutions, derive the asymptotic smoothing estimates and obtain the result on the upper-semicontinuous  convergence of attractors. Thus we extend and complete the previous results in \cite{Savostianov} where only the autonomous case was considered and in \cites{Mei_Sun, mcs} where the nonlinearity was only in the autonomous term. We stress some key difficulties and achievements of our work. We follow the methodology of \cite[Proposition 3.1 and Proposition 4.1]{Savostianov} to derive the Strichartz estimate for the nonlinear problem from the one for the linear problem (where we use the continuation principle that can be found for example in \cite[Proposition 1.21]{Tao}) but in the proof we need the extra property that the constant $C_h$ in the linear Strichartz estimate
$$
\|u\|_{L^4(0,h;L^{12})} \leq C_h(\|(u_0,u_1)\|_{\mathcal{E}_0} + \|G\|_{L^1(0,T;L^{2})}),
$$
is a nondecreasing function of $h$. We establish this fact with the use of the Christ--Kiselev lemma \cite[Lemma 3.1]{Sogge}. 
As the part of the definition of the weak solution that we work with, we choose that it should be the limit of the Galerkin approximations. In \cite[Section 3]{Savostianov} the authors decide to work with Shatah--Struwe solutions (i.e. the solutions posessing the extra $L^4(0,T;L^{12}(\Omega))$ regularity), and they prove that each such solution must be the limit of the Galerkin approximations, cf. \cite[Corollary 3.6]{Savostianov}. We establish that in the subcritical case the two notions are in fact equivalent, cf. our Lemma \ref{lem:savostianov}. In \cite[Corollary 4.3]{Savostianov} the authors derive only $\mathcal{E}_\delta$ estimates saying in Remark 4.6 about possibility of further bootstrapping arguments. We derive in Section \ref{sec:6} the relevant asymptotic smoothing estimates and thus provide the result on the attractor smoothness in $\mathcal{E}_1$. The main result of the paper about non-autonomous attractors, cf. Theorem \ref{thm:main}, uses the findings of \cites{Kloeden, Langa} and establishes the existence, smoothness, and relation between uniform, cocycle (and thus also pullback) attractors. Finally, another novelty of the present paper is the  upper-semicontinuity result of Section \ref{sec:7} which also concerns these three classes of non-autonomous attractors. 

The possible extension of our results can involve dealing with a non-autonomous nonlinearity with critical quintic growth condition. This case is more delicate because control of the energy norm of the initial data does not give the control over norm $L^4(0,T;L^{12}(\Omega))$. To overcome this problem Kalantarov, Savostianov, and Zelik in \cite{Savostianov} used the technique of trajectory attractors. Another interesting question is the possibility of extending the results of
\cite{Mirelson-Kalita-Langa} about the convergence of non-autonomous attractors for equations
$$\varepsilon u_{tt}+u_{t}-\Delta u =f_\varepsilon(t,u)$$
to the attractor for the semilinear heat equation as $\varepsilon\to 0$,
in the case of subquintic or quintic growth condition on $f$. The main difficulty is to obtain the Strichartz estimates which are uniform with respect to $\varepsilon$.
Finally we mention the possible further line of research involving the lower semicontinuous convergence of attractor and the stability of the attractor structure under perturbation.  

Structure of the article is as follows. After some preliminary facts reminded in Section \ref{sec:2}, the formulation of the problem, assumptions of its data, and some auxiliary results regarding the translation compactness on the non-autonomous term are presented in Section \ref{sec:3}. Next Section \ref{sec:4} is devoted to the Galerkin solutions and their dissipativity and the following Section \ref{sec:5} contains the results on the Strichartz estimates, Shatah--Struwe solutions, and their equivalence with the Galerkin solutions. Result on the existence and asymptotic smoothness of non-autonomous attractors, Theorem \ref{thm:main} is contained in Section \ref{sec:6}, while in Section \ref{sec:7}  
we prove their upper-semicontinuous convergence to the global attractor of the  autonomous problem. Some auxiliary results needed in the paper are included in the final Section \ref{sec:8}.

\section{Preliminaries.}\label{sec:2}
Let $\Omega\subset\mathbb{R}^3$ be a bounded and open set with sufficiently
smooth boundary. We will use the notation $L^2$ for $L^2(\Omega)$ and in
general for notation brevity we will skip writing dependence on $\Omega$ in
spaces of functions defined on this set.
By $(\cdot,\cdot),\norm{.}$ we will denote respectively the scalar product and
 the norm in $L^2$.
 We will also use the notation $\mathcal{E}_0 = H^1_0\times L^2$ for the energy space. Its norm is defined by $\|(u,v)\|^2_{\mathcal{E}_0} = \|\nabla u\|^2 + \|v\|^2$. In the article by $C$ we denote a generic positive constant which can vary from line to line. We recall some useful information concerning the spectral fractional Laplacian \cite{Antil}. Denote by $\{e_i\}_{i=1}^\infty$ the eigenfunctions (normed to $1$ in $L^2(\Omega)$) of the operator $-\Delta$ with the Dirichlet boundary conditions, such that the corresponding eigenvalues are given by
$$
0< \lambda_1 \leq \lambda_2 \leq \ldots \leq \lambda_n \leq \ldots.
$$
For $u\in L^2$ its $k$-th Fourier coefficient is defined as $\widehat{u}_k = (u,e_k)$. Let $s\geq 0$. The spectral fractional laplacian is defined by the formula
$$
(-\Delta)^\frac{s}{2}u = \sum_{k=1}^\infty \lambda_k^\frac{s}{2} \widehat{u}_k.
$$
The space $\mathbb{H}^s$ is defined as
$$
\mathbb{H}^s = \left\{  u\in L^2\,:\ \sum_{k=1}^\infty \lambda_k^s \widehat{u}_k^2 < \infty\right\}.
$$
The corresponding norm is given by
$$\|u\|_{\mathbb{H}^s} = \|(-\Delta)^{s/2}u\| = \sqrt{\sum_{k=1}^\infty \lambda_k^s \widehat{u}_k^2}.$$
The space $\mathbb{H}^s$ is a subspace of the fractional Sobolev space $H^s$. In particular
$$
\mathbb{H}^s = \begin{cases}
H^s = H^s_0\ \ \textrm{for}\ \ s\in (0,1/2),\\
H^s_0 \ \ \textrm{for}\ \ s\in (1/2,1].
\end{cases}
$$
We also remind that the standard fractional Sobolev norm satisfies $\|u\|_{H^s}\leq C\|u\|_{\mathbb{H}^s}$ for $u\in \mathbb{H}^s$, cf. \cite[Proposition 2.1]{Antil}.
For $s\in [0,1]$ we will use the notation $\mathcal{E}_s = \mathbb{H}^{s+1} \times \mathbb{H}^s$. This space is equipped with the norm $\|(u,v)\|^2_{\mathcal{E}_s} = \|u\|^2_{\mathbb{H}^{s+1}} + \|v\|^2_{\mathbb{H}^s}$. 
 
\section{Problem definition and assumptions.}\label{sec:3}
We consider the following family of  problems parameterized by $\varepsilon > 0$
\begin{equation}\label{eq:prblm}
\begin{cases}
 u_{tt}+u_t-\Delta u = f_\varepsilon(t,u)\; \text{for}\; (x,t)\in \Omega\times(0,\infty),\\
u(t,x) = 0\; \text{for}\; x\in \partial \Omega,
\\u(0,x) = u_0(x),
\\u_t(0,x) = u_1(x).
\end{cases}
\end{equation}
The initial data has the regularity $(u_0,u_1) \in \mathcal{E}_0$.
Throughout the article we always assume that the nonautonomous and
nonlinear term $f_\varepsilon(t,u)$, treated as the mapping
which assigns to the time $t\in \R$ the function of the variable $u$,
belongs to the space $C(\mathbb{R};C^1(\mathbb{R}))$.
This space is equipped with the metric
$$d_{C(\mathbb{R};C^1(\mathbb{R}))}(g_1,g_2)
= \sum_{i=1}^\infty\frac{1}{2^i}
\frac
{\sup_{t\in[-i,i]}
d_{C^1(\mathbb{R})}(g_1(t,.),g_2(t,.))
}
{1+\sup_{t\in[-i,i]}
d_{C^1(\mathbb{R})}(g_1(t,.),g_2(t,.))
}
\;\text{for $g_1,g_2\in C(\mathbb{R};C^1(\mathbb{R}))$}
,$$
where the metric in $C^1(\mathbb{R})$ is defined as follows
$$d_{C^1(\mathbb{R})}(g_1,g_2)
=\sum_{i=1}^\infty\frac{1}{2^i}
\frac
{
\norm{g_1(u)-g_2(u)}_{C^1([-i,i])}
}
{1+
\norm{g_1(u)-g_2(u)}_{C^1([-i,i])}
}
\;\text{for $g_1,g_2\in C^1(\mathbb{R})$},
$$
and $\|g\|_{C^1(A)} = \max_{r\in A}|g(r)| + \max_{r\in A}|g'(r)|$ for a compact set $A\subset \R$.
\begin{remark}
If $g_n\to g$ in sense of $\CC$ then $ g_n\to g$ and $\partu{ g_n}\to \partu{g}$ uniformly on every bounded subset of $\mathbb{R}$.
\end{remark}
We make the following assumptions on functions $f_\varepsilon:\R\times \R\to \R$ and $f_0:\R\to \R$
\begin{itemize}
    \item[(H1)]\label{as:H1}
    For every $\varepsilon \in (0,1]$ the function $f_{\varepsilon}\in C(\mathbb{R};C^1(\mathbb{R}))$, and  $f_0\in C^1(\mathbb{R})$.

    \item[(H2)]\label{as:H2}
    For every $u\in \R$
$$
\lim_{\varepsilon\to 0} \sup_{t\in\mathbb{R}}|f_\varepsilon(t,u)-f_0(u)| = 0.
$$
\item[(H3)]\label{as:H3} There holds
$$
\sup_{\varepsilon\in[0,1]}\sup_{t\in \R}\sup_{u\in \R} |f_\varepsilon(t,u)-f_0(u)| < \infty.
$$
\item[(H4)]\label{as:H4} There holds
$$
\limsup_{|u|<\infty} \frac{f_0(u)}{u} < \lambda_1,
$$
where $\lambda_1$ is the first eigenvalue of $-\Delta$ operator with the Dirichlet boundary conditions.
\item[(H5)]\label{as:H5}
There exists $\kappa > 0$ and $C>0$ such that

$$
\sup_{\varepsilon\in[0,1]}\sup_{t\in \R}\left|\frac{\partial f_\varepsilon}{\partial u}(t,u)\right| \leq C(1+|u|^{4-\kappa})\ \ \textrm{for every}\ \ u\in \R.
$$

\item[(H6)]\label{as:H6}  For any fixed $u\in\mathbb{R}$ the map $f_\varepsilon(t,u)$ is uniformly continuous with respect to $t$. Moreover for every $R>0$ the map $\R\times [-R,R]\ni (t,u)\mapsto \frac{\partial f_\varepsilon}{\partial u}(t,u)$ is uniformly continuous.

\end{itemize}
\begin{remark}
The example of family of functions satisfying conditions \hyperref[as:H1]{(H1)}-\hyperref[as:H6]{(H6)} is $f_\varepsilon(t,u)=-u|u|^{4-\kappa}+g(u)+\varepsilon \sin(t)\sin(u^3) $ where the growth of $g(u)$ is essentially lower than $5-\kappa$.
\end{remark}

\begin{proposition}\label{prop23}
	Assuming \hyperref[as:H1]{(H1)}, \hyperref[as:H3]{(H3)}, \hyperref[as:H5]{(H5)}, and \hyperref[as:H6]{(H6)}, for every $R > 0$ the mapping
	$$
	\R\times [-R,R]\ni (t,u) \mapsto f_{\varepsilon}(t,u)	$$
	is uniformly continuous.
\end{proposition}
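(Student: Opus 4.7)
The plan is to split the difference $|f_\varepsilon(t_1,u_1)-f_\varepsilon(t_2,u_2)|$ via the triangle inequality into a piece that captures oscillation only in $u$ and a piece that captures oscillation only in $t$, and to control each separately:
$$
|f_\varepsilon(t_1,u_1)-f_\varepsilon(t_2,u_2)| \leq |f_\varepsilon(t_2,u_1)-f_\varepsilon(t_2,u_2)| + |f_\varepsilon(t_1,u_1)-f_\varepsilon(t_2,u_1)|.
$$
The first summand is handled by the mean value theorem: for every $t\in\mathbb{R}$ and $u_1,u_2\in[-R,R]$, assumption \hyperref[as:H5]{(H5)} gives
$$
|f_\varepsilon(t,u_1)-f_\varepsilon(t,u_2)| \leq L_R\,|u_1-u_2|, \qquad L_R := C(1+R^{4-\kappa}),
$$
so $f_\varepsilon$ is Lipschitz in $u$ with a constant that is uniform in $t$.

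The less trivial piece is the second summand, where \hyperref[as:H6]{(H6)} only provides pointwise-in-$u$ uniform continuity in $t$. To upgrade this to joint uniform continuity in $u$, I would use compactness of $[-R,R]$. Fix $\eta>0$, choose a finite $(\eta/(3L_R))$-net $u_1^*,\ldots,u_N^*\in[-R,R]$, and apply \hyperref[as:H6]{(H6)} at each $u_j^*$ to obtain $\delta_j>0$ such that $|t_1-t_2|<\delta_j$ implies $|f_\varepsilon(t_1,u_j^*)-f_\varepsilon(t_2,u_j^*)|<\eta/3$. Setting $\delta_0=\min_j\delta_j>0$ and estimating, for arbitrary $u\in[-R,R]$ and the nearest $u_j^*$,
$$
|f_\varepsilon(t_1,u)-f_\varepsilon(t_2,u)| \leq 2L_R\,|u-u_j^*| + |f_\varepsilon(t_1,u_j^*)-f_\varepsilon(t_2,u_j^*)| < \eta
$$
whenever $|t_1-t_2|<\delta_0$. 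Combining both estimates shows that, for $|t_1-t_2|<\delta_0$ and $|u_1-u_2|<\eta/L_R$, the whole difference is bounded by $2\eta$, which gives the required uniform continuity.

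The main (mild) obstacle is precisely this upgrading step, where the compactness of $[-R,R]$ is essential and the growth bound \hyperref[as:H5]{(H5)} is used to absorb the interpolation error introduced by the finite net. Note that assumption \hyperref[as:H3]{(H3)} is not actually invoked; it is presumably listed here for uniformity with later statements where the global sup over $(t,u)\in\mathbb{R}\times\mathbb{R}$ plays a role, but the argument above only needs \hyperref[as:H1]{(H1)}, \hyperref[as:H5]{(H5)}, and \hyperref[as:H6]{(H6)}.
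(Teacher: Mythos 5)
Your proposal is correct and follows essentially the same route as the paper: the same triangle-inequality splitting, the same uniform-in-$t$ Lipschitz bound in $u$ from \hyperref[as:H5]{(H5)} via the mean value theorem, and the same use of compactness of $[-R,R]$ to upgrade the pointwise-in-$u$ statement of \hyperref[as:H6]{(H6)} to a uniform one (you do this directly with a finite net, the paper by contradiction with a convergent subsequence, which are two faces of the same argument). Your observation that \hyperref[as:H3]{(H3)} is not actually needed also matches the paper's proof, which likewise never invokes it.
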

\begin{proof}
	Let $u_1, u_2 \in [-R,R]$ and $t_1, t_2 \in \R$. There holds
\begin{align*}
&	|f_{\varepsilon}(t_1,u_1)-f_{\varepsilon}(t_2,u_2)| \leq 	|f_{\varepsilon}(t_1,u_1)-f_{\varepsilon}(t_1,u_2)| + 	|f_{\varepsilon}(t_1,u_2)-f_{\varepsilon}(t_2,u_2)| \\
& \qquad \leq C(R)|u_1-u_2| + \sup_{|u|\leq R}|f_{\varepsilon}(t_1,u)-f_{\varepsilon}(t_2,u)|.
	\end{align*}
	It suffices to prove that for every $\eta > 0 $ we can find $\delta > 0 $ such that if only $|t_1-t_2|\leq \delta$ then $\sup_{|u|\leq R}|f_{\varepsilon}(t_1,u)-f_{\varepsilon}(t_2,u)| \leq \eta$.
	Assume for contradiction that there exists $\eta_0 > 0$ such that for every $n\in \mathbb{N}$ we can find $t^n_1,t^n_2 \in \R$ with $|t_1-t_2|\leq \frac{1}{n}$ and
	$$
	\sup_{|u|\leq R}|f_{\varepsilon}(t^n_1,u)-f_{\varepsilon}(t^n_2,u)| >  \eta_0.
	$$
	For every $n$ there exists $u^n$ with $|u^n|\leq R$ such that
	$$
	|f_{\varepsilon}(t^n_1,u^n)-f_{\varepsilon}(t^n_2,u^n)| >  \eta_0.
	$$
	For a subsequence $u^n\to u^0$ with $|u^0|\leq R$, hence
	\begin{align*}
	&\eta _0 < 	|f_{\varepsilon}(t^n_1,u^n)-f_{\varepsilon}(t^n_1,u^0)| + 	|f_{\varepsilon}(t^n_1,u^0)-f_{\varepsilon}(t^n_2,u^0)| + 	|f_{\varepsilon}(t^n_2,u^0)-f_{\varepsilon}(t^n_2,u^n)|\\
	& \ \ \  \leq 2C(R) |u^n-u^0| + |f_{\varepsilon}(t^n_1,u^0)-f_{\varepsilon}(t^n_2,u^0)|.
	\end{align*}
	By taking $n$ large enough we deduce that
	$$
	\frac{\eta_0}{2} < |f_{\varepsilon}(t^n_1,u^0)-f_{\varepsilon}(t^n_2,u^0)|,
	$$
	a contradiction with uniform continuity of $f_{\varepsilon}(\cdot,u_0)$ assumed in \hyperref[as:H6]{(H6)}.
\end{proof}

We define hull of $f$ as a set $\mathcal{H}(f):=\overline{\{f(t+\cdot,\cdot)\in\CC \}_{t\in \R}}$, where the closure is understood in the metric $d_{C(\R;C^1(\R))}$. We also define set
$$\mathcal{H}_{[0,1]}:=\bigcup_{\varepsilon \in  (0,1]}\mathcal{H}(f_\varepsilon) \cup \{f_0\}$$
We say that a function $f$ is translation compact if its hull $\mathcal{H}(f)$ is a compact set. The following characterization of translation compactness can be found in \cite[Proposition 2.5 and Remark 2.2]{Chepyzhov_Vishik}.

\begin{proposition}\label{pro:trc}
Let $f\in C(\mathbb{R};C^1(\mathbb{R}))$. Then $f$ is translation compact if and only if for every $R>0$
\begin{itemize}
\item[(i)] $|f(t,u)|+
|\frac{\partial f}{\partial u}(t,u) |\leq C_R$ for $(t,u)\in\mathbb{R}\times [-R,R],$

\item[(ii)]Functions $f(t,u)$ and $\frac{\partial f}{\partial u}(t,u)$ are uniformly continuous on $\mathbb{R}\times[-R,R].$
\end{itemize}
\end{proposition}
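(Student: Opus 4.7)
The plan is to reduce the statement to the Arzelà–Ascoli theorem by unpacking the definition of the metric $d_{C(\R;C^1(\R))}$. First I would observe that convergence $g_n\to g$ in $C(\R;C^1(\R))$ is equivalent to the statement that for every $i,R\in\mathbb{N}$,
$$
\sup_{t\in[-i,i]}\|g_n(t,\cdot)-g(t,\cdot)\|_{C^1([-R,R])}\to 0,
$$
that is, $g_n\to g$ and $\partial_u g_n\to\partial_u g$ uniformly on every compact rectangle $[-i,i]\times[-R,R]$. Consequently $\mathcal{H}(f)$ is compact if and only if, for each $i$ and $R$, the family of restrictions
$$
\bigl\{\bigl(f(\cdot+s,\cdot),\;\partial_u f(\cdot+s,\cdot)\bigr)\bigr|_{[-i,i]\times[-R,R]}:s\in\R\bigr\}
$$
is relatively compact in $C([-i,i]\times[-R,R])\times C([-i,i]\times[-R,R])$ (together with a standard diagonal extraction over $i,R$).

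Next I would invoke Arzelà–Ascoli on each such compact: relative compactness is equivalent to uniform boundedness together with equicontinuity of the family. For the boundedness part, note that by translation invariance
$$
\sup_{s\in\R}\sup_{(t,u)\in[-i,i]\times[-R,R]}|f(t+s,u)|=\sup_{(t,u)\in\R\times[-R,R]}|f(t,u)|,
$$
and the same holds for $\partial_u f$; thus uniform boundedness of the families of translates on all rectangles $[-i,i]\times[-R,R]$ is exactly condition (i). For the equicontinuity part, the statement that for every $\eta>0$ there exists $\delta>0$ such that $|t_1-t_2|+|u_1-u_2|<\delta$ implies $|f(t_1+s,u_1)-f(t_2+s,u_2)|<\eta$ for every $s\in\R$ is, by translation invariance, equivalent to the uniform continuity of $f$ on $\R\times[-R,R]$; the same reasoning applied to $\partial_u f$ yields condition (ii).

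Combining these two observations gives both implications. The forward direction is immediate: if $\mathcal{H}(f)$ is compact then in particular the subset of translates is precompact, so Arzelà–Ascoli provides the boundedness and equicontinuity, which by the translation argument above translate into (i) and (ii). For the reverse direction, assuming (i) and (ii), Arzelà–Ascoli gives that from any sequence $\{s_n\}\subset\R$ one can extract a subsequence such that both $f(\cdot+s_n,\cdot)$ and $\partial_u f(\cdot+s_n,\cdot)$ converge uniformly on $[-i,i]\times[-R,R]$; a Cantor diagonal over $i,R\in\mathbb{N}$ produces a further subsequence convergent on every compact rectangle, hence convergent in the metric $d_{C(\R;C^1(\R))}$, which shows precompactness of $\{f(\cdot+s,\cdot):s\in\R\}$ and therefore compactness of its closure $\mathcal{H}(f)$.

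The main obstacle I expect is not any deep estimate but the bookkeeping needed to verify that the uniformity in $s$ in the equicontinuity condition for the translates corresponds precisely to uniform continuity of $f$ (respectively $\partial_u f$) on the unbounded set $\R\times[-R,R]$; once this equivalence is stated carefully, the rest reduces to a textbook application of Arzelà–Ascoli and a diagonal extraction.
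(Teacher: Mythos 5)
Your argument is correct: the paper does not prove this proposition but delegates it to \cite[Proposition 2.5 and Remark 2.2]{Chepyzhov_Vishik}, and your Arzel\`a--Ascoli argument (uniform boundedness plus equicontinuity of the translates of $f$ and $\partial f/\partial u$ on each rectangle $[-i,i]\times[-R,R]$, converted into conditions on $f$ itself via translation invariance, followed by a diagonal extraction over $i,R$) is precisely the standard proof of that cited result. The only point left implicit is that a locally uniform limit of translates together with the limit of their $u$-derivatives again defines an element of $C(\mathbb{R};C^1(\mathbb{R}))$, which is routine.
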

We prove two simple results concerning the translation compactness of $f_\varepsilon$ and its hull.
\begin{corollary}
For each $\varepsilon\in(0,1]$ function $f_\varepsilon$ is translation compact.
\end{corollary}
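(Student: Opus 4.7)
The plan is to apply Proposition \ref{pro:trc} directly, checking that for each fixed $\varepsilon\in(0,1]$ the function $f_\varepsilon$ meets the two listed conditions on every strip $\mathbb{R}\times[-R,R]$.

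For the boundedness condition (i), I would bound the derivative $\partial_u f_\varepsilon$ using \hyperref[as:H5]{(H5)}, which immediately gives $|\partial_u f_\varepsilon(t,u)|\leq C(1+R^{4-\kappa})$ on $\mathbb{R}\times[-R,R]$. For $f_\varepsilon$ itself I would write
$$
|f_\varepsilon(t,u)|\leq |f_\varepsilon(t,u)-f_0(u)|+|f_0(u)|,
$$
where the first term is globally bounded by \hyperref[as:H3]{(H3)} and the second is bounded on $[-R,R]$ because $f_0\in C^1(\mathbb{R})$ by \hyperref[as:H1]{(H1)}. This takes care of (i).

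For condition (ii), the uniform continuity of $f_\varepsilon$ on $\mathbb{R}\times[-R,R]$ is exactly the content of Proposition \ref{prop23}, so I would simply invoke it. The uniform continuity of $\partial_u f_\varepsilon$ on the same set is built into assumption \hyperref[as:H6]{(H6)}. Combining these two pieces yields condition (ii) and thereby, via Proposition \ref{pro:trc}, the translation compactness of $f_\varepsilon$.

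There is no genuine obstacle here: the corollary is essentially a bookkeeping check that the hypotheses \hyperref[as:H1]{(H1)}, \hyperref[as:H3]{(H3)}, \hyperref[as:H5]{(H5)}, \hyperref[as:H6]{(H6)} were designed precisely to match the criterion of Proposition \ref{pro:trc}, with Proposition \ref{prop23} filling in the one nontrivial piece (uniform continuity of $f_\varepsilon$ from uniform continuity of $\partial_u f_\varepsilon$ plus the local Lipschitz bound). I would keep the write-up short, two short paragraphs verifying (i) and (ii) in turn and then quoting Proposition \ref{pro:trc}.
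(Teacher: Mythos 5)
Your proposal is correct and follows essentially the same route as the paper: the paper likewise verifies (i) from \hyperref[as:H3]{(H3)} and $f_0\in C^1(\R)$, and (ii) from \hyperref[as:H6]{(H6)} together with Proposition \ref{prop23}, then invokes Proposition \ref{pro:trc}. Your explicit use of \hyperref[as:H5]{(H5)} for the derivative bound in (i) is a detail the paper leaves implicit but is needed and correct.
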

\begin{proof}
From assumption \hyperref[as:H3]{(H3)} and the fact that $f_0\in C^1(\mathbb{R})$ one can deduce that (i) from Proposition \ref{pro:trc} holds. Moreover, \hyperref[as:H6]{(H6)} and Proposition \ref{prop23} imply that  (ii) holds, and the proof is complete.
\end{proof}
\begin{proposition}\label{prop:p}
If $f_\varepsilon$ satisfies conditions \hyperref[as:H1]{(H1)}, \hyperref[as:H2]{(H2)}, \hyperref[as:H3]{(H3)}, and \hyperref[as:H5]{(H5)} then these conditions are satisfied by all elements from $\mathcal{H}_{[0,1]}$. Moreover there exist constants $C,K>0$ independent of $\varepsilon$ such that for every $p_\varepsilon \in \mathcal{H}(f_\varepsilon)$ there hold
\begin{equation}
\sup_{\epsilon\in [0,1]}\sup_{t\in \R}\sup_{u\in \R} \left|p_\varepsilon(t,s)-f_0(u)\right| \leq K,
\qquad
\sup_{\varepsilon\in[0,1]}\sup_{t\in \R}\left|\frac{\partial p_\varepsilon}{\partial u}(t,u)\right| \leq C(1+|u|^{4-\kappa})\ \ \textrm{for every}\ \ u\in \R.
\end{equation}

\end{proposition}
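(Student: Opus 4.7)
The plan is to use the observation (essentially the remark immediately preceding the assumptions, together with the two nested dyadic averages in the definition of $d_{C(\mathbb{R};C^1(\mathbb{R}))}$) that convergence $g_n\to g$ in this metric is equivalent to the uniform convergence of $g_n$ and of $\partial_u g_n$ to $g$ and $\partial_u g$ on every bounded subset of $\mathbb{R}\times\mathbb{R}$. In particular, if $p_\varepsilon \in \mathcal{H}(f_\varepsilon)$, then by definition there is a sequence $t_n\in\mathbb{R}$ with $f_\varepsilon(t_n+\cdot,\cdot)\to p_\varepsilon$, hence
$$
p_\varepsilon(t,u)=\lim_{n\to\infty}f_\varepsilon(t_n+t,u),\qquad \frac{\partial p_\varepsilon}{\partial u}(t,u)=\lim_{n\to\infty}\frac{\partial f_\varepsilon}{\partial u}(t_n+t,u)
$$
pointwise on $\mathbb{R}\times\mathbb{R}$. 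With this in hand, each assertion of the proposition reduces to passing a pointwise inequality to the limit.

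Concretely, I would verify the conditions in turn. \hyperref[as:H1]{(H1)} for $p_\varepsilon$ is automatic since $\mathcal{H}(f_\varepsilon)$ is by definition a closed subset of $C(\mathbb{R};C^1(\mathbb{R}))$. For the first displayed bound and \hyperref[as:H3]{(H3)} on the hull, I set $K:=\sup_{\varepsilon'\in[0,1]}\sup_{s\in\mathbb{R}}\sup_{u\in\mathbb{R}}|f_{\varepsilon'}(s,u)-f_0(u)|$, which is finite by \hyperref[as:H3]{(H3)} for $f_\varepsilon$; then $|f_\varepsilon(t_n+t,u)-f_0(u)|\le K$ for all $n$, and taking $n\to\infty$ yields $|p_\varepsilon(t,u)-f_0(u)|\le K$ uniformly in $(t,u)$. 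For \hyperref[as:H2]{(H2)} on the hull, the same limiting step gives the monotone estimate $\sup_{t\in\mathbb{R}}|p_\varepsilon(t,u)-f_0(u)|\le\sup_{s\in\mathbb{R}}|f_\varepsilon(s,u)-f_0(u)|$, whose right-hand side tends to zero as $\varepsilon\to 0$ by \hyperref[as:H2]{(H2)} for $f_\varepsilon$. Finally, the second displayed estimate and \hyperref[as:H5]{(H5)} on the hull follow by applying the same argument to $\partial_u f_\varepsilon(t_n+t,u)$, exploiting that the envelope $C(1+|u|^{4-\kappa})$ in \hyperref[as:H5]{(H5)} is independent of both $\varepsilon$ and $t$. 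The only element of $\mathcal{H}_{[0,1]}$ not produced in this way is $f_0$ itself, for which all the listed conditions hold tautologically since $f_0$ is $t$-independent.

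I do not anticipate any real obstacle beyond careful bookkeeping. The one technical point worth writing out in detail is the equivalence between convergence in $d_{C(\mathbb{R};C^1(\mathbb{R}))}$ and uniform convergence together with that of the $u$-derivative on bounded rectangles, which is a direct unpacking of the two nested dyadic sums defining the metric and is also recorded in the remark placed just after the definition of the metric.
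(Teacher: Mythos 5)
Your proposal is correct and follows essentially the same route as the paper: every element of the hull is a limit of time-translates of $f_\varepsilon$, convergence in $d_{C(\mathbb{R};C^1(\mathbb{R}))}$ gives pointwise convergence of the function and its $u$-derivative, and the uniform-in-$(t,\varepsilon)$ bounds from \hyperref[as:H3]{(H3)} and \hyperref[as:H5]{(H5)} pass to the limit. The only cosmetic difference is that you verify \hyperref[as:H2]{(H2)} directly via the inequality $\sup_t|p_\varepsilon(t,u)-f_0(u)|\le\sup_s|f_\varepsilon(s,u)-f_0(u)|$, whereas the paper argues by contradiction; your version is, if anything, slightly cleaner.
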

\begin{proof}
Property
\hyperlink{as:H1}{(H1)} is clear.
Suppose that \hyperlink{as:H2}{(H2)} does not hold. Then there exists a number $\delta > 0$, sequences $\varepsilon_n \to 0$, $p_{\varepsilon_n}\in \mathcal{H}(f_{\varepsilon})$, $t_n \in R$ and a number $u\in\mathbb{R}$ such that
$$
|p_{\varepsilon_n}(t_n,u)-f_0(u)|> 2\delta.
$$
Because $p_{\varepsilon_n}\in\mathcal{H}(f_{\varepsilon_n})$
we can pick a sequence $s_n$ such that $|f_{\varepsilon_n}(s_n+t_n,u)-p_{\varepsilon_n}(t_n,u)|
\leq \delta$.
Then
$$
|f_{\varepsilon_n}(t_n+s_n,u)-f_0(u)|\geq -|f_{\varepsilon_n}(t_n+s_n,u)-p_{\varepsilon_n}(t_n,u)|+
|p_{\varepsilon_n}(t_n,u)-f_0(u)|\geq \delta.
$$
Now \hyperlink{as:H2}{(H2)} follows by contradiction.
We denote
$$K_1:=
\sup_{\varepsilon\in[0,1]}\sup_{t\in \R}\sup_{u\in \R} |f_\varepsilon(t,u)-f_0(u)|,
$$
which from assumption \hyperlink{as:H3}{(H3)} is a finite number.
Taking $p_\varepsilon\in \mathcal{H}_{[0,1]}$, for every $(t,u) \in \R^2$ we obtain.
$$
|p_\varepsilon(t,u)-f_0(u)|\leq
|f_\varepsilon(t+s_n,u)-p_\varepsilon(t,u)|+
|f_\varepsilon(t+s_n,u)-f_0(u)|
\leq K_1 + |f_\varepsilon(t+s_n,u)-p_\varepsilon(t,u)|
$$
We can a pick sequence $s_n$ such that
$|f_{\varepsilon}(s_n+t,u)-
p_{\varepsilon}(t,u)|\to 0$. So passing to the limit we get $$
|p_\varepsilon(t,u)-f_0(u)|\leq K_1
$$
We have proved that for every $p_\varepsilon\in \mathcal{H}_{[0,1]}$ we have
$$
\sup_{t\in \R}\sup_{u\in \R}
|p_\varepsilon(t,u)-f_0(u)|\leq K_1.
$$
From \hyperref[as:H5]{(H5)} we obtain
$$
\left|\partu{f_\varepsilon}(t+s_n,u)\right|\leq C(1 + |u|^{4-\kappa}),
$$
for every $u,t,s_n\in\mathbb{R},\varepsilon\in[0,1]$. Again by choosing $s_n$ such that $|f_{\varepsilon}(s_n+t,u)-
p_{\varepsilon}(t,u)|\to 0$ and passing to the limit we observe that 
for every $p_\varepsilon\in\mathcal{H}_{[0,1]}$ there holds
\begin{equation}
\sup_{t\in \R}\left|\frac{\partial p_\varepsilon}{\partial u}(t,u)\right| \leq C(1+|u|^{4-\kappa})\ \ \textrm{for every}\ \ u\in \R,
\end{equation}
which ends the proof.
\end{proof}

\begin{proposition}
	If \hyperref[as:H1]{(H1)}, \hyperref[as:H2]{(H2)}, \hyperref[as:H3]{(H3)}, and \hyperref[as:H5]{(H5)} hold, then for every $R>0$ and every $p_\varepsilon\in \mathcal{H}(f_\varepsilon)$
	$$
	\lim_{\varepsilon\to 0}\sup_{|s|\leq R}\sup_{t\in \R}|p_{\varepsilon}(t,s)-f_0(s)| = 0.
	$$
\end{proposition}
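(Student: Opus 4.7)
The plan is to first prove the statement with $f_\varepsilon$ in place of $p_\varepsilon$, i.e.\ to upgrade the pointwise (in $u$) convergence asserted by \hyperref[as:H2]{(H2)} into convergence that is uniform on $[-R,R]$, and then transfer the conclusion to arbitrary $p_\varepsilon\in\mathcal H(f_\varepsilon)$ via the definition of the hull.

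For the first step, set $g_\varepsilon(u):=\sup_{t\in\R}|f_\varepsilon(t,u)-f_0(u)|$, so that \hyperref[as:H2]{(H2)} reads $g_\varepsilon(u)\to 0$ for every fixed $u\in\R$. Combining \hyperref[as:H5]{(H5)} with \hyperref[as:H1]{(H1)} I obtain, for $u_1,u_2\in[-R,R]$, all $t\in\R$ and $\varepsilon\in(0,1]$,
$$|f_\varepsilon(t,u_1)-f_\varepsilon(t,u_2)|+|f_0(u_1)-f_0(u_2)|\leq L_R|u_1-u_2|,$$
with $L_R$ depending only on $R$ ($\partial_u f_0$ is continuous hence bounded on $[-R,R]$ by \hyperref[as:H1]{(H1)}, while \hyperref[as:H5]{(H5)} bounds $\partial_u f_\varepsilon$ uniformly in $\varepsilon,t$ on $[-R,R]$). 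The reverse triangle inequality applied to this bound and then to the supremum over $t$ gives the equi-Lipschitz estimate $|g_\varepsilon(u_1)-g_\varepsilon(u_2)|\leq L_R|u_1-u_2|$ uniformly in $\varepsilon$. Given $\eta>0$, I would cover $[-R,R]$ by finitely many balls of radius $\eta/(2L_R)$ with centres $u_1,\dots,u_M$, use \hyperref[as:H2]{(H2)} at each centre to select $\varepsilon_0>0$ with $g_\varepsilon(u_k)<\eta/2$ for all $k$ and all $\varepsilon<\varepsilon_0$, and combine with the equi-Lipschitz bound to conclude $\sup_{|u|\leq R}g_\varepsilon(u)\leq\eta$. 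This rewrites as
$$\lim_{\varepsilon\to 0}\sup_{|u|\leq R}\sup_{t\in\R}|f_\varepsilon(t,u)-f_0(u)|=0.$$

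For the second step I fix $p_\varepsilon\in\mathcal H(f_\varepsilon)$ and a positive integer $N\geq R$. By definition of the hull and of the metric $d_{C(\R;C^1(\R))}$, there exists a sequence $\{s_n\}\subset\R$ (depending on $\varepsilon$ and $N$) such that
$$\sup_{t\in[-N,N]}\sup_{|u|\leq R}|f_\varepsilon(t+s_n,u)-p_\varepsilon(t,u)|\xrightarrow[n\to\infty]{}0.$$
For $t\in[-N,N]$ and $|s|\leq R$ the triangle inequality gives
$$|p_\varepsilon(t,s)-f_0(s)|\leq|p_\varepsilon(t,s)-f_\varepsilon(t+s_n,s)|+\sup_{\tau\in\R,\,|u|\leq R}|f_\varepsilon(\tau,u)-f_0(u)|,$$
and letting $n\to\infty$ removes the first summand. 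Since the resulting right-hand side is independent of $N$, taking the supremum over $t\in\R$ and $|s|\leq R$ and invoking Step~1 finishes the proof.

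The only delicate point is the equicontinuity-to-uniform-convergence upgrade in Step~1: once the equi-Lipschitz bound (which is the genuine content of \hyperref[as:H5]{(H5)} in this context, and which would not follow from pointwise bounds on $f_\varepsilon$ alone) is in hand, a finite-cover argument replaces a naive diagonal limit. The transfer to general hull elements in Step~2 uses only the definition of the Fréchet-style metric on $C(\R;C^1(\R))$ and is essentially bookkeeping.
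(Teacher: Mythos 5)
Your proof is correct. The underlying mechanism is the same as the paper's -- assumption \hyperref[as:H2]{(H2)} gives pointwise convergence in $u$, assumption \hyperref[as:H5]{(H5)} gives a Lipschitz bound on $[-R,R]$ uniform in $t$ and $\varepsilon$, and compactness of $[-R,R]$ upgrades pointwise to uniform -- but the packaging differs in two ways. First, the paper argues by contradiction, extracting a convergent subsequence $s_n\to s_0$ and splitting $|p_{\varepsilon_n}(t_n,s_n)-f_0(s_n)|$ into three terms, whereas you run a direct finite-cover (equi-Lipschitz plus $\varepsilon/2$) argument; these are interchangeable. Second, and more substantively, the paper first transfers the hypotheses \hyperref[as:H2]{(H2)} and \hyperref[as:H5]{(H5)} to all hull elements via Proposition \ref{prop:p} and then proves uniformity directly for $p_\varepsilon$, while you prove the uniform statement for $f_\varepsilon$ itself and only at the end transfer the \emph{conclusion} to $p_\varepsilon$ using the definition of the metric on $C(\R;C^1(\R))$. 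Your route is self-contained (it does not invoke Proposition \ref{prop:p}), at the cost of the extra bookkeeping with the truncation parameter $N$ in Step 2; that step is fine as written since the bound you obtain for $t\in[-N,N]$ is independent of $N$. One cosmetic remark: the approximating sequence $\{s_n\}$ coming from the hull need not depend on $N$ -- a single sequence with $f_\varepsilon(\cdot+s_n,\cdot)\to p_\varepsilon$ in $d_{C(\R;C^1(\R))}$ already yields the uniform convergence on $[-N,N]\times[-R,R]$ for every $N$ and $R$ simultaneously.
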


\begin{proof}
	For contradiction assume that there exists $\delta >0$ and sequences $|s_n|\leq R$, $t_n\in \R$, $\varepsilon_n\to 0$ such that
	$$
	\delta\leq |p_{\varepsilon_n}(t_n,s_n)-f_0(s_n)|.
	$$
	For a subsequence there holds $s_n\to s_0$, where $|s_0|\leq R$. Hence,
	\begin{align*}
	& \delta \leq |p_{\varepsilon_n}(t_n,s_n)-p_{\varepsilon_n}(t_n,s_0)| +|p_{\varepsilon_n}(t_n,s_0)-f_0(s_0)|+|f_0(s_0)-f_0(s_n)|\\
	& \leq C(1+|s_n|^{4-\kappa}+|s_0|^{4-\kappa})|s_n-s_0| + \sup_{t\in \R} |p_{\varepsilon_n}(t,s_0)-f_0(s_0)|+|f_0(s_0)-f_0(s_n)|\\
	& \leq C(R)|s_n-s_0| + \sup_{t\in \R} |p_{\varepsilon_n}(t,s_0)-f_0(s_0)|+|f_0(s_0)-f_0(s_n)|.
	\end{align*}
	All terms on right-hand side tend to zero as $n\to \infty$, and we have the contradiction.
\end{proof}

\section{Galerkin solutions.}\label{sec:4}
\begin{definition}\label{def:problem}
Let $(u_0,u_1)\in \mathcal{E}_0$. The function $u\in L^\infty_{loc}([0,\infty); H^1_0)$ with $u_t\in L^\infty_{loc}([0,\infty);L^2)$ and $u_{tt}\in L^\infty_{loc}([0,\infty);H^{-1})$ is a weak solution of problem \eqref{eq:prblm} if for every $v\in L^2_{loc}([0,\infty);H^1_0)$
there holds
$$
\int_{0}^{t_1} \langle u_{tt}(t), v(t)\rangle_{H^{-1}\times H^1_0} + (u_t(t) - f_{\varepsilon}(t,u(t)),v) + (\nabla u(t), \nabla v(t)) \, dt = 0,
$$
and $u(0) = u_0$, $u_t(0) = u_1$.
\end{definition}
Note that as $u\in C([0,\infty);L^2)$ and $u_t\in C([0,\infty);H^{-1})$, pointwise values of $u$ and $u_t$, and thus the initial data, make sense. However, due to the lack of regularity of the nonlinear term $f_{\varepsilon}(\cdot ,u(\cdot))$, we cannot test the equation with $u_t$. Thus, although it is straightforward to prove (using the Galerkin method) the existence of the weak solution given by the above definition, we cannot establish the energy estimates required to work with this solution.

Let $\{e_i \}_{i=1}^\infty$ be the eigenfunctions of the $-\Delta$ operator with the Dirichlet boundary conditions on $\partial \Omega$ sorted by the nondecreasing eigenvalues. They constitute the  orthonormal basis of $L^2$ and they are orthogonal in $H^1_0$. Denote $V_N = \textrm{span}\, \{ e_1,\ldots, e_N \}$. The family of finite dimensional spaces $\{ V_N\}_{N=1}^\infty$ approximates $H^1_0$ from the inside, that is
$$
\overline{\bigcup_{N=1}^\infty V_N}^{H^1_0} = H^1_0\qquad \textrm{and}\qquad V_N\subset V_{N+1}\ \ \textrm{for every}\ \ N\geq 1.
$$
Let $u^N_0\in V_N$ and $u^N_1\in V_N$ be such that
\begin{align*}
    & u^N_0 \to u_0\ \  \textrm{in}\ \ H^1_0\ \ \textrm{as}\ \ N\to \infty,\\
    & u^N_1 \to u_1\ \  \textrm{in}\ \ L^2\ \ \textrm{as}\ \ N\to \infty.
\end{align*}
Now the $N$-th Galerkin approximate solution for \eqref{eq:prblm} is defined as follows.
\begin{definition}\label{def:galerkin}
The function $u^N\in C^1([0,\infty); V_N)$ with $u^N_t\in AC([0,\infty);V_N)$ is the $N$-th Galerkin approximate solution of problem \eqref{eq:prblm} if $u_N(0) = u^N_0$, $u^N_t(0) = u^N_1$ and for every $v\in V_N$ and a.e. $t>0$
there holds
$$
(u^N_{tt}(t)+ u^N_t(t) - f_{\varepsilon}(t,u^N(t)),v) + (\nabla u^N(t), \nabla v) = 0.
$$
\end{definition}
We continue by defining the weak solution of the Galerkin type
\begin{definition}
The weak solution given by Definition \ref{def:problem} is said to be of the Galerkin type if it can be approximated by the solutions of the Galerkin problems, i.e., for a nonrenumbered subsequence of $N$ there holds
\begin{align}
    & u^N \to u\ \ \textrm{weakly-* in}\ \ L^\infty_{loc}([0,\infty);H^1_0),\\
    & u^N_t \to u_t\ \ \textrm{weakly-* in}\ \ L^\infty_{loc}([0,\infty);L^2),\\
    & u^N_{tt} \to u_{tt}\ \ \textrm{weakly-* in}\ \ L^\infty_{loc}([0,\infty);H^{-1}).
\end{align}
\end{definition}
We skip the proof of the following result which is standard in the framework of the Galerkin method.
\begin{theorem}
Assume \hyperlink{as:H1}{(H1)}, \hyperlink{as:H3}{(H3)}--\hyperlink{as:H5}{(H5)}. If $(u_0,u_1)\in \mathcal{E}_0$ then the problem given in Definition \ref{def:problem} has at least one weak solution of Galerkin type.
\end{theorem}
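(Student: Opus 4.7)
The plan is to carry out the standard Galerkin--compactness scheme, relying on (H4)--(H5) for the a priori bounds and on the strict subquintic growth in (H5) for passing to the limit in the nonlinearity. First, for each $N$, the finite dimensional Cauchy problem from Definition \ref{def:galerkin} admits a locally absolutely continuous solution by Carathéodory's theorem: the vector field is continuous in $t$ by (H1) and $C^1$, hence locally Lipschitz, in the coefficients.

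Second, I would test the $N$-th Galerkin equation with $u^N_t\in V_N$ (admissible in the finite dimensional setting) and decompose $f_\varepsilon(t,u^N)=f_0(u^N)+(f_\varepsilon(t,u^N)-f_0(u^N))$. With $F_0(u)=\int_0^u f_0(s)\,ds$ this yields
\begin{equation*}
\dt\left(\tfrac12\|u^N_t\|^2+\tfrac12\|\nabla u^N\|^2-\int_\Omega F_0(u^N)\,dx\right)+\|u^N_t\|^2=\int_\Omega (f_\varepsilon(t,u^N)-f_0(u^N))\,u^N_t\,dx.
\end{equation*}
By (H3) the right hand side is bounded by $K|\Omega|^{1/2}\|u^N_t\|$, which is absorbed into $\tfrac12\|u^N_t\|^2$ plus a constant. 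Assumption (H4) with Poincaré gives $\int_\Omega F_0(u^N)\,dx\le\tfrac{\mu}{2\lambda_1}\|\nabla u^N\|^2+C$ for some $\mu<\lambda_1$, so $\tfrac12\|\nabla u^N\|^2-\int_\Omega F_0(u^N)$ is equivalent, up to additive constants, to $\|\nabla u^N\|^2$. A Gronwall argument then produces a bound on $\|u^N_t(t)\|^2+\|\nabla u^N(t)\|^2$ which is uniform in $N$ on every bounded time interval, so the Galerkin solution extends globally. Using the Sobolev embedding $H^1_0\hookrightarrow L^6$ together with (H5), $f_\varepsilon(t,u^N)$ is bounded in $L^\infty_{loc}(L^{6/(5-\kappa)})\subset L^\infty_{loc}(H^{-1})$, and reading $u^N_{tt}=\Delta u^N-u^N_t+f_\varepsilon(t,u^N)$ off the equation yields a uniform bound in $L^\infty_{loc}(H^{-1})$.

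Third, Banach--Alaoglu produces a subsequence converging in each of the three required weak-$*$ topologies. By the Aubin--Lions lemma applied to $u^N\in L^\infty_{loc}(H^1_0)$ with $u^N_t\in L^\infty_{loc}(L^2)$, one obtains $u^N\to u$ strongly in $C([0,T];L^p)$ for every $p<6$, and, passing to a further subsequence, pointwise a.e.\ on $(0,T)\times\Omega$. Combining the a.e.\ convergence with the uniform $L^\infty_{loc}(L^{6/(5-\kappa)})$ bound on $f_\varepsilon(\cdot,u^N)$, Vitali's equi-integrability theorem gives $f_\varepsilon(\cdot,u^N)\to f_\varepsilon(\cdot,u)$ strongly in $L^q_{loc}((0,T)\times\Omega)$ for every $q<6/(5-\kappa)$, which is more than enough to pass to the limit in the variational identity of Definition \ref{def:problem} against $v\in L^2_{loc}(H^1_0)$. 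The initial conditions are identified from $(u^N_0,u^N_1)\to(u_0,u_1)$ in $\mathcal{E}_0$ together with the continuity $u\in C([0,\infty);L^2)$ and $u_t\in C([0,\infty);H^{-1})$.

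The hard part is the nonlinear term in the limit passage, and it is precisely here that the \emph{strict} subquintic condition enters: it places $|f_\varepsilon(t,u^N)|$ in a reflexive $L^q$ with $q>1$, giving the equi-integrability needed to identify $\lim f_\varepsilon(t,u^N)$ with $f_\varepsilon(t,u)$. Were the growth exactly quintic this step would fail, and one would need the Strichartz-based machinery developed later in the paper even to produce a weak solution of Galerkin type.
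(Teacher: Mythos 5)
Your argument is correct and is precisely the standard Galerkin--compactness scheme that the paper itself invokes without writing out (``We skip the proof of the following result which is standard in the framework of the Galerkin method''); the uses of (H1), (H3), (H4), (H5) are all legitimate and the strict subquinticity is indeed what makes the limit in the nonlinearity routine. The only point worth polishing is the final pairing: since the test function $v$ lies only in $L^2_{loc}(H^1_0)\subset L^2_{loc}(L^6)$ and not in a single space-time Lebesgue space of high exponent, the convergence of $\iint f_\varepsilon(t,u^N)v$ is most cleanly justified from the uniform $L^\infty_t L^{6/(5-\kappa)}_x$ bound together with a.e.\ convergence (Vitali in $x$ for a.e.\ $t$, then dominated convergence in $t$), rather than from strong convergence in $L^q((0,T)\times\Omega)$ alone.
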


\begin{proposition}\label{prop:abball}
The Galerkin solutions of problem \eqref{eq:prblm} are bounded in $\mathcal{E}_0$ and there exists a bounded  set $B_0\subset \mathcal{E}_0$ which is absorbing, i.e. for every bounded set $B\subset\mathcal{E}_0$ there exists $t_0\geq 0$ such that for every Galerkin solution $(u(t),u_t(t))$  with the initial conditions in $B$ there holds $(u(t),u_t(t)) \in B_0$ for every $t \geq t_0$. Moreover $B_0$ and $t_1$  do not depend on the choice of $p(t,u)\in\mathcal{H}_{[0,1]}$ in place of $f_\varepsilon$  in  \eqref{eq:prblm}.

\end{proposition}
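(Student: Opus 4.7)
The plan is to derive a uniform-in-$N$ $\mathcal{E}_0$-bound on the Galerkin approximations $u^N$ with constants depending only on the structural data of Proposition~\ref{prop:p}, and then to pass this bound to the Galerkin-type weak solution via weak-$*$ lower semicontinuity of norms. Uniformity with respect to $p\in\mathcal{H}_{[0,1]}$ in place of $f_\varepsilon$ is automatic because all the constants supplied by Proposition~\ref{prop:p} are uniform on the hull.

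Concretely, decompose $p(t,u)=f_0(u)+g(t,u)$ with $\sup_{(t,u)\in\R^2}|g(t,u)|\leq K$, and set $F_0(u)=\int_0^u f_0(s)\,ds$. For a parameter $\eta>0$ to be fixed small later, introduce the modified energy
$$
\Phi^N(t)=\tfrac{1}{2}\|u^N_t\|^2+\tfrac{1}{2}\|\nabla u^N\|^2-\int_\Omega F_0(u^N)\,dx+\eta(u^N,u^N_t)+\tfrac{\eta}{2}\|u^N\|^2.
$$
Testing the Galerkin equation with $u^N_t$ and with $\eta u^N$ and summing, while using $(u^N_{tt},u^N)=\frac{d}{dt}(u^N_t,u^N)-\|u^N_t\|^2$ to eliminate second time derivatives, yields
$$
\frac{d\Phi^N}{dt}+(1-\eta)\|u^N_t\|^2+\eta\|\nabla u^N\|^2-\eta\int_\Omega f_0(u^N)u^N\,dx=\int_\Omega g(t,u^N)(u^N_t+\eta u^N)\,dx.
$$
The dissipativity \hyperref[as:H4]{(H4)} supplies $f_0(u)u\leq(\lambda_1-2\delta)u^2+C$, whereby Poincar\'e's inequality absorbs the reaction term into $\eta\|\nabla u^N\|^2$, leaving $\tfrac{2\eta\delta}{\lambda_1}\|\nabla u^N\|^2$. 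Young's inequality with $|g|\leq K$ bounds the right-hand side by $\tfrac14\|u^N_t\|^2+\tfrac{\eta\delta}{\lambda_1}\|\nabla u^N\|^2+C_\eta$. Integrating the bound from \hyperref[as:H4]{(H4)} gives $F_0(u)\leq\tfrac{\lambda_1-2\delta}{2}u^2+C$, and applying Young's inequality to $\eta(u^N,u^N_t)$ then yields the lower coercivity $\Phi^N(t)\geq c_1\|(u^N(t),u^N_t(t))\|_{\mathcal{E}_0}^2-C_1$. Selecting $\eta$ small enough to render all dissipative coefficients positive, one obtains
$$
\frac{d\Phi^N}{dt}+c\bigl(\|u^N_t\|^2+\|\nabla u^N\|^2\bigr)\leq C,
$$
with $c,C,c_1,C_1$ depending only on $\lambda_1,\delta,K,|\Omega|$.

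The main obstacle is the absence of linear upper coercivity for $\Phi^N$: from \hyperref[as:H5]{(H5)} and $H^1_0\hookrightarrow L^{6-\kappa}$ one only has $\Phi^N\leq C(1+\|(u^N,u^N_t)\|_{\mathcal{E}_0}^{6-\kappa})$, with exponent strictly larger than two, which prevents a one-shot Gr\"onwall closure of the form $\Phi^N{}'+\alpha\Phi^N\leq C'$. I close the absorbing argument by the standard interval-pigeonhole scheme for weakly damped wave equations (cf.\ \cite{Temam, Chueshov_Lasiecka}): integrating the differential inequality over $[t,t+T]$ and using the lower coercivity yields $c\int_t^{t+T}\|(u^N,u^N_t)(s)\|_{\mathcal{E}_0}^2\,ds\leq\Phi^N(t)+CT+C_1$, so for $T$ sufficiently large the mean value theorem produces an instant $s\in[t,t+T]$ at which $\|(u^N,u^N_t)(s)\|_{\mathcal{E}_0}^2$ is bounded by a universal threshold depending only on the structural data. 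The polynomial upper bound then pins $\Phi^N(s)$ by a universal constant, and continuous dependence of the Galerkin flow on unit intervals (again with polynomial constants from \hyperref[as:H5]{(H5)}) propagates this pointwise bound forward. This identifies the absorbing set $B_0\subset\mathcal{E}_0$ and the absorbing time $t_0=t_0(B)$, depending only on $\|B\|_{\mathcal{E}_0}$ and the structural constants, hence independently of the choice of $p\in\mathcal{H}_{[0,1]}$. Weak-$*$ lower semicontinuity of norms finally transfers the estimate to the Galerkin-type weak solutions.
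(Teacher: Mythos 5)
Your proof is correct, but it resolves the one genuinely delicate point by a different mechanism than the paper. The energy functional and the differential inequality you derive are the same as the paper's up to normalization (the paper tests with $u+2u_t$ rather than $u_t^N+\eta u^N$, and works formally at the Galerkin level with the same decomposition $p=f_0+g$ supplied by Proposition \ref{prop:p}, so uniformity over $\mathcal{H}_{[0,1]}$ is obtained identically). The obstruction you correctly isolate --- that $\Phi^N$ is only controlled by a superquadratic power of the energy norm because of the potential term $\int_\Omega F_0(u^N)\,dx$, so no linear Gr\"onwall closure is available --- is exactly the issue the paper addresses, but the paper does so with a bespoke componentwise Gr\"onwall lemma (Lemma \ref{le:gron}): writing $I=I_1+\dots+I_5$ and observing that the bad term satisfies $I_5^{1/3}\leq C(\|\nabla u\|^2+1)$, so that $\frac{d}{dt}I\leq -A_5 I_5^{1/3}+B_5$, the lemma yields both absorption and the uniform-in-time bound in one stroke. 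You instead use the classical time-averaging/pigeonhole scheme: integrate the dissipation over a window of length $T(B)$, find a good time where the energy is below a universal threshold, bound $\Phi^N$ there by the polynomial upper coercivity, and propagate forward. Both routes are valid; the paper's lemma buys a cleaner statement of the uniform bound for families of trajectories with bounded initial energy, while your argument is more elementary and self-contained. The only place you are vaguer than you should be is the forward propagation: invoking ``continuous dependence of the Galerkin flow'' is not quite the right tool (its constants are not obviously $N$-independent), but the needed step follows from what you already have, either by re-applying the pigeonhole on successive windows of universal length together with $\frac{d\Phi^N}{dt}\leq C$, or by a trapping argument at the first crossing of a suitable level set (which is precisely the content of the first half of the paper's Lemma \ref{le:gron}). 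The final passage to the limit by weak-$*$ lower semicontinuity matches the paper's (implicit) treatment.
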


Note that in the above result the function $f_\varepsilon$  in  \eqref{eq:prblm} is replaced by $p\in \mathcal{H}_{[0,1]}$. In the sequel we will consider \eqref{eq:prblm} with such $p\in \mathcal{H}_{[0,1]}$ replacing $f_\varepsilon$. 

 To prove the above proposition we will need the following Gronwall type lemma.
\begin{lemma}\label{le:gron}
Let $I(t)=I_1(t)+\ldots+I_n(t)$ be an absolutely continuous function, $I(0)\in\R$. Suppose that
$$\dt I(t)\leq -A_i I_i(t)^{\alpha_i} + B_i,$$
for every $i\in\set{1,\ldots,n}$ and for almost every $t$ such that $I_i(t)\geq 0$, where $\alpha_i, A_i, B_i > 0$ are constants.
Then for every $\eta>0$ there exists $t_0>0$ such that
$$
I(t)\leq
\sum_{i=1}^n\left(\frac{B_i}{A_i}\right)^{\frac{1}{\alpha_i}}
+
\eta,\;
\text{for every $t\geq t_0$}.
$$
If, in addition, $\{I^l(t)\}_{l \in\mathcal{L}}$ is a family of functions satisfying the above conditions and such that $I^l(0)\leq Q$ for each $l\in \mathcal{L}$, then the time $t_0$ is independent of $l$ and there exists a constant $C$ depending on $Q, A_i, B_i, \alpha_i$ such that $I^l(t)\leq C$ for every $t\geq 0$ and every $l\in \mathcal{L}$. 
\end{lemma}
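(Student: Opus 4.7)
Let $M_i := (B_i/A_i)^{1/\alpha_i}$ and $M := \sum_{i=1}^n M_i$; these are the precise thresholds at which the right-hand sides $-A_i s^{\alpha_i} + B_i$ vanish. Fix $\eta > 0$. The plan is to show that whenever $I(t) > M + \eta$ the hypothesis produces a strict, uniform-in-$t$ upper bound on $\dt I(t)$ by a negative constant depending only on $\eta$, and then to combine this with the absolute continuity of $I$ to push $I$ into $\{I \leq M + \eta\}$ within a computable time $t_0$ and keep it trapped there.

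The key elementary observation is a pigeonhole step: if $I(t) > M + \eta$, then $\sum_i (I_i(t) - M_i) > \eta$, so some index $i = i(t)$ satisfies $I_i(t) > M_i + \eta/n > 0$. For this index the hypothesis applies and gives
$$
\dt I(t) \leq -A_i I_i(t)^{\alpha_i} + B_i < -A_i \bigl[(M_i + \eta/n)^{\alpha_i} - M_i^{\alpha_i}\bigr] \leq -c(\eta),
$$
where $c(\eta) := \min_{1 \leq i \leq n} A_i \bigl[(M_i + \eta/n)^{\alpha_i} - M_i^{\alpha_i}\bigr] > 0$; strict positivity of each term uses only that $s \mapsto s^{\alpha_i}$ is strictly increasing when $\alpha_i > 0$. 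Hence $\dt I(t) \leq -c(\eta)$ for almost every $t$ with $I(t) > M + \eta$.

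Set $t_0 := \max\bigl(0, (I(0) - M - \eta)/c(\eta)\bigr)$. If $I(0) \leq M + \eta$ there is nothing to prove at $t = 0$; otherwise, as long as $I$ remains above $M + \eta$, the derivative bound together with the absolute continuity of $I$ yields $I(t) \leq I(0) - c(\eta) t$, which forces $I(t) \leq M + \eta$ by time $t_0$. To rule out later escapes, I would assume $I(t^*) > M + \eta$ for some $t^* > t_0$, set $t_2 := \sup\{t \in [t_0, t^*] : I(t) \leq M + \eta\}$, use continuity to obtain $I(t_2) = M + \eta$ and $I > M + \eta$ on $(t_2, t^*]$, then integrate $\dt I \leq -c(\eta)$ a.e.\ over $(t_2, t^*)$ to reach the contradiction $I(t^*) < M + \eta$.

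For the family version, $t_0$ depends on the initial datum only through the upper bound $I^l(0) \leq Q$, so replacing $I(0)$ by $Q$ produces a time $t_0$ uniform in $l$. The pointwise bound $I^l(t) \leq \max(Q, M + \eta) =: C$ then follows because each $I^l$ is strictly decreasing on the set where it exceeds $M + \eta$, so $\sup_{t \geq 0} I^l(t) \leq \max(I^l(0), M + \eta)$. The only mildly delicate point, and the one I expect to take the most care, is the ``stays below'' argument: the hypothesis provides the differential inequality only almost everywhere and only through whichever component index happens to be active at time $t$, so this step must be phrased via absolute continuity and integration of the a.e.\ derivative bound rather than as a pointwise comparison.
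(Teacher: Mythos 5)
Your proof is correct and follows essentially the same route as the paper's: a pigeonhole step producing a uniform negative bound on $\dt I$ whenever $I$ exceeds the threshold, a trapping argument via the supremum of entry times, and integration of the a.e.\ derivative bound, with the same treatment of the family case. The only difference is cosmetic — you make the decay constant $c(\eta)$ and the time $t_0$ explicit, whereas the paper leaves the corresponding $\delta$ implicit and obtains $t_0$ by contradiction along a sequence $t_n\to\infty$; both are valid.
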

\begin{proof}
We denote $$B=\sum_{i=1}^n\left(\frac{B_i}{A_i}\right)^{\frac{1}{\alpha_i}}$$ and let $A= \min_{i\in \{ 1,\ldots,n\}} \{ A_i  \}$.
First we will show that for every $\eta>0$
if $I(t_0)\leq B+\eta$, then
$I(t)\leq B+\eta$ for every $t\geq t_0$.
For the sake of contradiction let us suppose that
there exists some $t_1>t_0$ such that $I(t_1)>B+\eta$.
Let $t_2=\sup \set{ s\in[t_0,t_1]\,:\ 
 I(s)\leq B + \eta
}$,
so there exists $\delta>0$ such that for every $s\in (t_2,t_1]$ we can find an index $i$ for which there holds $$I_i(s)>\left(\frac{B_i}{A_i}+\delta\right)^{\frac{1}{\alpha_i}}.$$ Then for a.e. $s\in (t_2,t_1]$ we have
$$
\dt I(s)\leq -A_iI_i(s)^{\alpha_i}+B_i\leq -A\delta,
$$
and
after integration we get that $I(t_1)<I(t_2) -(t_2 - t_1) A\delta$ which is a contradiction. We observe that all functions from family $\{I^l(t)\}_{l\in\mathcal{L}}$ are bounded by  $\max\{Q,1\} + B$.
Now we will prove existence of $t_0$. For the sake of contradiction suppose that there 
exists $\eta>0$ and the sequence of times $t_n \to \infty$ such that $I^{l_n}(t_n)>B+\eta$ for some $l_n\in\mathcal{L}$. Then for every $s\in [0,t_n]$ we must have $I^{l_n}(s)>B+\eta$. Then there exist $\delta>0$ such for all $s\in [0,t_n]$ and $l_n$ there is some $I_i^{l_n}(s)>(\frac{B_i}{A_i}+\delta)^\frac{1}{\alpha_i}$.
Again for a.e $s\in (0,t_n)$
$$\dt I^{l^n}(s)\leq -A\delta$$
and after integrating we get that
$I^{l_n}(t_n) \leq Q - t_nA\delta$
which is contradiction.
\end{proof}
\begin{proof}[Proof of Proposition \ref{prop:abball}]
Let $u$ be the Galerkin solution to \eqref{eq:prblm} with any function  
$p\in\mathcal{H}_{[0,1]}$ in place of $f_\varepsilon$ at the right-hand side of \eqref{eq:prblm}. By testing this equation with $u+2\ut$ we obtain
\begin{align*}
 &   \dt\left[
(\ut,u)+\frac{1}{2}\norm{u}^2+
\norm{\ut}^2+\norm{\nabla u}^2-
2\int_\Omega F_0(u)dx
\right]\\
& \qquad 
=-\norm{\ut}^2-\norm{\nabla u}^2 +(f_0(u),u)
+
(p(t,u) - f_0(u),2 \ut+u)
,
\end{align*}
where $F_0(u)=\int_0^uf_0(v)dv$. Assumption \hyperref[as:H4]{(H4)} implies the inequality
$$(f_0(u),u)\leq C+K\norm{u}^2,\text{ where }0\leq K<\lambda_1.$$
We define $$I(t)=
(\ut,u)+\frac{1}{2}\norm{u}^2+
\norm{\ut}^2+\norm{\nabla u}^2-
2\int_\Omega F_0(v)dx.$$
Using the Poincar\'{e} and Cauchy--Schwarz inequalities we obtain
\begin{equation}
\dt I(t) \leq - \norm{u_t}^2 -C \norm{\nabla u}^2 + \norm{p_{\varepsilon}(t,u) - f_0(u)}(2\norm{ \ut}+ \norm{u})+ C.
\end{equation}
Using the Poincar\'{e} inequality again it follows by Proposition \ref{prop:p} that 
\begin{equation}\label{in:estimate1}
\dt I(t) \leq - C\left(\norm{u_t}^2 +\norm{\nabla u}^2\right) + C.
\end{equation}

\noindent
We represent the function $I(t)$ as the sum of the following terms
\begin{equation*}
 I_1 = \norm{\ut}^2,\;I_2=\frac{1}{2}\norm{ u}^2,\;I_3=\norm{\nabla
 u}^2,\;
 I_4(t)=(\ut,u),\;
 I_5=-2\int_{\Omega}F_0(u)dx.
\end{equation*}
From the estimate \eqref{in:estimate1} and Poincare inequality we can easily see that
\begin{equation}
    \text{$\dt I \leq -A_iI_i +B_i$ for $i\in\{1,2,3,4\}$,}
\end{equation}
where $A_i, B_i$ are positive constants. 
To deal with the term $I_5$ we observe that by the growth condition \hyperref[as:H5]{(H5)} using the H\"{o}lder
inequality we obtain
\begin{align*}
    I_5&\leq C \int_\Omega\left|\int_0^u 1+|v|^5dv\right| dx\leq C\int_\Omega\left(|u|+|u|^6\right) dx= C \left(\norm{u}_{L_1}+\norm{u}_{L_6}^6\right)\leq C\left(\norm{u}_{L_6}+\norm{u}_{L_6}^6\right)
    \\
    &\leq
     C\left(\norm{u}_{L_6}^6 + 1\right).
\end{align*}
From the Sobolev embedding  $H_0^1\hookrightarrow L_6$ it follows that 
\begin{equation}
     I_5^{\frac{1}{3}}
     \leq\left(C\norm{\nabla u}^6+ 1\right)^{\frac{1}{3}}
     \leq C\left(\norm{\nabla u}^2 +  1\right).
\end{equation}
From the estimate \eqref{in:estimate1} we observe that
\begin{equation}
    \dt I \leq -A_5I_5^{\frac{1}{3}} + B_5, \ \ \textrm{with}\ \ A_5,B_5 > 0.
\end{equation}
By Lemma \ref{le:gron} we deduce that there exists a constant $D>0$ such that every for bounded set of initial data $B\subset\mathcal{E}_0$ there exists the time $t_0=t_0(B)$ such that for every $p\in\mathcal{H}_{[0,1]}$ there holds
\begin{equation}
    \text{$I(t)\leq D$ for $t\geq t_0$ and $(u_0,u_1)\in B$.}
\end{equation}
We observe that from  \hyperref[as:H4]{(H4)} it follows that
\begin{equation}
    \text{$F_0(u)\leq C + \frac{K}{2}u^2$ where $0\leq K<\lambda_1$ }
\end{equation}
We deduce
\begin{equation}
    I(t)\geq  \frac{1}{2}\norm{\ut}^2 + \norm{\nabla u}^2 - K\norm{u}^2 - C\geq C\norm{u}_{\mathcal{E}_0}^2 - C.
\end{equation}
We have shown the existence of the absorbing set $B_0\subset \mathcal{E}_0$ which is independent of the choice of $p\in\mathcal{H}_{[0,1]}$. By Lemma $\ref{le:gron}$ it follows that for every initial condition $(u_0,u_1)\in \mathcal{E}_0$ there exists a constant $D=D(u_0,u_1) > 0$ such that for every $p\in\mathcal{H}_{[0,1]}$ and $t\in\mathbb{R}$ there holds
\begin{equation}
    \text{$I(t)\leq D$ for $t\in [0,\infty)$. }
\end{equation}
The proof is complete.
\end{proof}

\section{Shatah--Struwe solutions, their regularity and a priori estimates.}\label{sec:5}

\subsection{Auxiliary linear problem.} Similar as in \cite{Savostianov} we define an auxiliary  non-autonomous problem for which we derive a priori estimates both in energy and Strichartz norms.
\begin{align}
\begin{cases}
& u_{tt}+u_t-\Delta u = G(x,t)\; \text{for}\; (x,t)\in \Omega\times(t_0,\infty),\label{eq:linear}\\
&u(t,x) = 0\; \text{for}\; x\in \partial \Omega
\\&u(t_0,x) = u_0(x)
\\&u_t(t_0,x) = u_1(x)
\end{cases}
\end{align}
It is well known that if only $G\in L^1_{loc}([t_0,\infty);L^2)$ and $(u_0,u_1)\in \mathcal{E}_0$ then the above problem has the unique weak solution $u$ belonging to $C_{loc}([t_0,\infty);H^1_0)$ with $u_t\in C_{loc}([t_0,\infty);L^2)$ and $u_{tt}\in L^\infty_{loc}([t_0,\infty);H^{-1})$. For details see cf. \cites{Temam, Babin_Vishik, Robinson, Chepyzhov_Vishik}.
The next result appears in \cite[Proposition 2.1]{Savostianov}. For completeness of our argument we provide the outline of the proof.
\begin{proposition}\label{prop:linearestimate}
Let $u$ be the weak solution to problem \eqref{eq:linear} on interval $[t_0,\infty)$ with $G\in L^1_{loc}([t_0,\infty);L^2)$ and initial data $u(t_0) = u_0$, $u_t(t_0) = u_1$ with $(u_0,u_1) \in \mathcal{E}_0$. Then the following estimate holds
$$\norm{(u(t),u_t(t))}_{\mathcal{E}_0}\leq C \left(\norm{(u_0,u_1)}_{\mathcal{E}_0}e^{-\alpha (t-t_0)}+\int_{t_0}^t e^{-\alpha(t-s)}\norm{G(s)}ds\right)$$
for every $t\geq t_0$, where $C,\alpha$ are positive constants independent of $t,t_0,G$ and initial conditions of \eqref{eq:linear}.
\end{proposition}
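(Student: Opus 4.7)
The plan is to introduce a perturbed energy functional that incorporates a small cross-term in order to extract exponential decay from the pure damping. For a parameter $\delta\in(0,1/2)$ to be fixed, I would set
$$E(t) = \tfrac{1}{2}\norm{\ut}^2 + \tfrac{1}{2}\norm{\nabla u}^2 + \delta(u,\ut) + \tfrac{\delta}{2}\norm{u}^2.$$
Using the Cauchy--Schwarz and Poincar\'{e} inequalities, the mixed term is controlled by $\tfrac{\delta}{2}(\norm{\ut}^2 + \norm{u}^2)$, so for $\delta$ sufficiently small one obtains two-sided bounds of the form $c_1\norm{(u,\ut)}_{\mathcal{E}_0}^2 \leq E(t) \leq c_2\norm{(u,\ut)}_{\mathcal{E}_0}^2$.

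Next I would test the linear equation \eqref{eq:linear} with the multiplier $\ut + \delta u$. Since this multiplier is not a priori admissible for a weak solution of a hyperbolic equation, the identity is first derived at the Galerkin level (or equivalently by regularization of $G$ and of $(u_0,u_1)$) and then passed to the limit, as in \cite{Temam}. The resulting identity reads
$$\dt E(t) + (1-\delta)\norm{\ut}^2 + \delta\norm{\nabla u}^2 = (G(t),\,\ut + \delta u).$$
Combining Poincar\'{e} with the coercivity of $(1-\delta)\norm{\ut}^2 + \delta\norm{\nabla u}^2$, the left-hand side dominates $2\alpha E(t)$ for some $\alpha = \alpha(\delta) > 0$, while by Cauchy--Schwarz the right-hand side is bounded by $C\norm{G(t)}\sqrt{E(t)}$.

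Therefore $E$ satisfies the differential inequality $\dt E(t) + 2\alpha E(t) \leq C\norm{G(t)}\sqrt{E(t)}$. Dividing formally by $2\sqrt{E(t)}$ on the set where $E>0$ (and handling the possibility $E(t)=0$ in the standard way by regularizing $E$ to $E+\varepsilon$ and passing to the limit) yields
$$\dt\sqrt{E(t)} + \alpha\sqrt{E(t)} \leq C\norm{G(t)}.$$
Multiplying by the integrating factor $e^{\alpha t}$ and integrating from $t_0$ to $t$ gives
$$\sqrt{E(t)} \leq e^{-\alpha(t-t_0)}\sqrt{E(t_0)} + C\int_{t_0}^t e^{-\alpha(t-s)}\norm{G(s)}\,ds,$$
and the norm equivalence between $\sqrt{E(t)}$ and $\norm{(u(t),\ut(t))}_{\mathcal{E}_0}$ translates this bound directly into the conclusion of the proposition.

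The only delicate point in this argument is the justification of the energy identity at the level of weak solutions, since $\ut$ need not lie in $L^2_{loc}([t_0,\infty);H^1_0)$; this however is standard for the linear damped wave equation and is handled by approximation via smoother data. Once this is secured, the remaining manipulations are routine ODE computations with the modified energy.
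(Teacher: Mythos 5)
Your proposal is correct and follows essentially the same route as the paper: the paper tests with $u+2u_t$ and works with the perturbed energy $I(t)=(u_t,u)+\tfrac12\|u\|^2+\|u_t\|^2+\|\nabla u\|^2$ (your $E$ with a specific choice of $\delta$), arrives at the same differential inequality $\tfrac{d}{dt}I\leq C(-I+\sqrt{I}\,\|G\|)$, and closes it by a square-root Gronwall argument (implemented there via an auxiliary function $J$ and an $\varepsilon$-shift, which plays the same role as your regularization $E+\varepsilon$). The only cosmetic differences are the normalization of the cross term and the bookkeeping in the final ODE step.
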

\begin{proof}
Testing \eqref{eq:linear} by $u+2u_t$ we obtain
\begin{align*}
    \dt\left(
(\ut,u)+\frac{1}{2}\norm{u}^2+
\norm{\ut}^2+\norm{\nabla u}^2\right)
=-\norm{\ut}^2-\norm{\nabla u}^2 +(G(t),u+2\ut)
\end{align*}
We define $I(t) = (\ut,u)+\frac{1}{2}\norm{u}^2+
\norm{\ut}^2+\norm{\nabla u}^2$. We easily deduce
$$
\dt I(t)\leq C\left( -I(t) + \sqrt{I(t)}\norm{G(t)}\right).
$$
Multiplying the above inequality by $e^{Ct}$ we obtain
$$
\frac{d}{dt}\left(I(t)e^{Ct}\right) \leq C e^{Ct}\|G(t)\|\sqrt{I(t)}.
$$
After integration it follows that
$$
I(t)e^{Ct}-I(t_0)e^{Ct_0} \leq
C\int_{t_0}^t e^{Cs}\|G(s)\| \sqrt{I(s)}\, ds.
$$
Hence, for every $\varepsilon > 0$
\begin{equation}\label{in:estimate2}
I(t) \leq (I(t_0)+\varepsilon)e^{C(t_0-t)} +  e^{-Ct}C\int_{t_0}^t e^{Cs}\|G(s)\| \sqrt{I(s)}\, ds.
\end{equation}
Now let
$$
J(t) = C\int_{t_0}^t e^{Cs}\|G(s)\| \sqrt{I(s)}\, ds.
$$
Then $J$ is absolutely continuous, $J(t_0) = 0$, and for almost  every $t > t_0$ we obtain
$$
J'(t) = Ce^{Ct}\|G(t)\|\sqrt{I(t)}.
$$
From \eqref{in:estimate2} it follows that
$$
J'(t) \leq  C e^{Ct}\|G(t)\|\sqrt{(I(t_0)+\varepsilon)e^{C(t_0-t)}+e^{-Ct}J(t)} = C e^{\frac{Ct}{2}}\|G(t)\|\sqrt{(I(t_0)+\varepsilon)e^{Ct_0}+J(t)}.
$$
Hence
$$
\frac{J'(t)}{\sqrt{(I(t_0)+\varepsilon)e^{Ct_0}+J(t)}} \leq Ce^{\frac{Ct}{2}}\|G(t)\|.
$$
After integrating over interval $[t_0,t]$ we obtain the following inequality valid for every $t\geq t_0$
$$
\sqrt{(I(t_0)+\varepsilon)e^{Ct_0} + J(t)}  \leq
{\sqrt{(I(t_0)+\varepsilon)e^{Ct_0}}}+
\frac{C}{2}\int_{t_0}^t e^{\frac{Cs}{2}}\|G(s)\|\, ds.
$$
It follows that
$$
J(t)\leq C\left[
\left(\int_{t_0}^t e^{\frac{Cs}{2}}\|G(s)\|\, ds\right)^2+(I(t_0)+\varepsilon)e^{Ct_0}\right].
$$
From definition of $J(t)$ using the  inequality \eqref{in:estimate2} we notice that
$$
I(t)\leq C \left((I(t_0)+\varepsilon)e^{\alpha(t_0-t)}+
\left(\int_{t_0}^t e^{-\alpha(t - s)} \|G(s)\|\, ds\right)^2\right),
$$
for a constant $\alpha>0$.
As
$c_1\norm{(u(t),u_t(t))}_{\mathcal{E}_0}\leq
\sqrt{I(t)}\leq
c_2\norm{(u(t),u_t(t))}_{\mathcal{E}_0}$ for some $c_1,c_2>0$, passing with $\varepsilon$ to zero we obtain the required assertion.
\end{proof}
The following Lemma provides us an extra control on the $L^4(L^{12})$ norm of the solution to the linear problem \eqref{eq:prblm}. The result is given in  \cite[Proposition 2.2 and Remark 2.3]{Savostianov}.
\begin{lemma}\label{lemma:strichartz}
Let $h>0$ and let $u$ be a weak solution to problem \eqref{eq:linear} on time interval $(t_0,t_0+h)$ with $G\in L^1(t_0,t_0+h;L^2)$ and $(u(t_0),u_t(t_0)) = (u_0,u_1)\in \mathcal{E}_0$. Then $u\in L^4(t_0,t_0+h;L^{12})$ and the following estimate holds
\begin{equation}\label{eq:strichartz}
\norm{u}_{L^4(t_0,t_0+h;L^{12})}
\leq
C_h\left( \norm{(u_0,u_1)}_{\mathcal{E}_0}
+
\norm{G}_{L^1(t_0,t_0+h;L^2)} \right),
\end{equation}
where the constant $C_h > 0$ depends only on $h$ but is independent of $t_0,(u_0,u_1),G$.
\end{lemma}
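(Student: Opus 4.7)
The plan is to reduce \eqref{eq:linear} to the pure (undamped) wave equation by absorbing $u_t$ into the forcing, apply the Strichartz estimate on a bounded three-dimensional domain established in the deep works of Burq--Lebeau--Planchon~\cite{Burq} and Blair--Smith--Sogge~\cite{Blair}, and then use Proposition~\ref{prop:linearestimate} to control the extra term that arises.

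First I would rewrite \eqref{eq:linear} as
$$
u_{tt} - \Delta u = G - u_t, \qquad u|_{\partial\Omega}=0, \qquad (u(t_0), u_t(t_0)) = (u_0, u_1),
$$
so that the damping becomes part of the inhomogeneity. Next, I would invoke the Strichartz estimate for the inhomogeneous Dirichlet wave equation on $\Omega\subset\R^3$ with smooth boundary, for the wave-admissible pair $(4,12)$: any weak solution $v$ of $v_{tt}-\Delta v = F$ on $(t_0,t_0+h)$ with $F\in L^1(t_0,t_0+h;L^2)$ and $(v(t_0), v_t(t_0))\in \mathcal{E}_0$ satisfies
$$
\|v\|_{L^4(t_0,t_0+h;L^{12})} \leq C_h\bigl(\|(v(t_0), v_t(t_0))\|_{\mathcal{E}_0} + \|F\|_{L^1(t_0,t_0+h;L^2)}\bigr).
$$
By time-translation invariance of the autonomous linear equation, the constant $C_h$ depends only on $h$, not on $t_0$. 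Applying this with $F = G - u_t$ yields
$$
\|u\|_{L^4(t_0,t_0+h;L^{12})} \leq C_h\bigl(\|(u_0,u_1)\|_{\mathcal{E}_0} + \|G\|_{L^1(t_0,t_0+h;L^2)} + \|u_t\|_{L^1(t_0,t_0+h;L^2)}\bigr).
$$

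Finally, I would dispatch the last term by H\"older's inequality followed by Proposition~\ref{prop:linearestimate}:
$$
\|u_t\|_{L^1(t_0,t_0+h;L^2)} \leq h\,\|u_t\|_{L^\infty(t_0,t_0+h;L^2)} \leq C h\bigl(\|(u_0,u_1)\|_{\mathcal{E}_0} + \|G\|_{L^1(t_0,t_0+h;L^2)}\bigr),
$$
where I absorbed the exponential factor $e^{-\alpha(t-t_0)}\leq 1$ and bounded the integral $\int_{t_0}^{t}e^{-\alpha(t-s)}\|G(s)\|\,ds$ by $\|G\|_{L^1(t_0,t_0+h;L^2)}$. Merging the constant $Ch$ into a possibly larger $h$-dependent constant (still denoted $C_h$) gives \eqref{eq:strichartz}.

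The only nontrivial step is invoking the Strichartz estimate on a bounded domain, which is a deep harmonic-analytic result cited as a black box; the rest is a routine reduction combined with the energy estimate already proved. Note that the present statement only asserts that $C_h$ depends on $h$; the monotonicity of $C_h$ in $h$ advertised in the introduction (obtained via the Christ--Kiselev lemma) is a separate, stronger claim not needed at this stage.
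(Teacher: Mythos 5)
Your argument is correct, but it is worth noting that the paper does not actually prove this lemma: it is quoted verbatim from \cite{Savostianov}*{Proposition 2.2 and Remark 2.3}, so there is no in-text proof to compare against. Your reconstruction is the standard route behind that cited result: absorb the damping into the forcing, apply the inhomogeneous Strichartz estimate for the undamped Dirichlet wave equation on a bounded $3$d domain with the admissible pair $(4,12)$ (Burq--Lebeau--Planchon, Blair--Smith--Sogge), and control $\|u_t\|_{L^1(t_0,t_0+h;L^2)}$ by $h$ times the energy bound of Proposition \ref{prop:linearestimate}, which is proved independently of this lemma, so there is no circularity. Two small points deserve attention. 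First, the references \cites{Burq,Blair} state homogeneous estimates; the inhomogeneous version with forcing in $L^1(t_0,t_0+h;L^2)$ that you invoke as a black box follows from the homogeneous one by Duhamel's formula and Minkowski's integral inequality (no Christ--Kiselev argument is needed when the forcing is measured in $L^1$ in time), so your final remark that the monotonicity of $h\mapsto C_h$ is a separate issue is accurate and consistent with the paper, which establishes that monotonicity afterwards in a dedicated proposition. Second, your reduction makes $C_h$ grow like $h$ times the constant from the undamped estimate, which is harmless here; the paper's later proof of monotonicity in $h$ proceeds by a similar Duhamel decomposition, so your approach meshes well with what follows.
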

We will need the following result.
\begin{proposition}
It is possible to choose the constants $C_h$ in previous lemma such that the function $[0,\infty) \ni h \to C_h$ is nondecreasing.
\end{proposition}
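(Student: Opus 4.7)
The plan is to replace the constants $C_h$ from Lemma \ref{lemma:strichartz} by the smallest admissible ones, and to verify the resulting monotonicity by a zero-extension argument on the forcing. For each $h > 0$, let $C_h^{\ast}$ denote the infimum of those constants $C > 0$ for which \eqref{eq:strichartz} holds on every interval of length $h$, uniformly in $t_0 \in \mathbb{R}$, $(u_0, u_1) \in \mathcal{E}_0$ and $G \in L^1(t_0, t_0 + h; L^2)$. By Lemma \ref{lemma:strichartz} we have $C_h^{\ast} \leq C_h < \infty$ for every $h > 0$, and passing to the limit in \eqref{eq:strichartz} along a minimizing sequence $C_n \downarrow C_h^{\ast}$ shows that $C_h^{\ast}$ itself is admissible.

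It therefore suffices to prove $C_{h_1}^{\ast} \leq C_{h_2}^{\ast}$ whenever $0 < h_1 \leq h_2$. Given a weak solution $u$ of \eqref{eq:linear} on $(t_0, t_0 + h_1)$ with data $(u_0, u_1) \in \mathcal{E}_0$ and forcing $G \in L^1(t_0, t_0 + h_1; L^2)$, I extend $G$ to $\tilde G \in L^1(t_0, t_0 + h_2; L^2)$ by setting $\tilde G = 0$ on $(t_0 + h_1, t_0 + h_2)$. Let $\tilde u$ be the weak solution of \eqref{eq:linear} on the longer interval with the same initial data $(u_0, u_1)$ and forcing $\tilde G$; existence and uniqueness of such $\tilde u$ were recalled just before Proposition \ref{prop:linearestimate}, and by uniqueness $\tilde u$ coincides with $u$ on $(t_0, t_0 + h_1)$.

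Applying \eqref{eq:strichartz} to $\tilde u$ on the interval of length $h_2$ with constant $C_{h_2}^{\ast}$, and using both $\|\tilde G\|_{L^1(t_0, t_0 + h_2; L^2)} = \|G\|_{L^1(t_0, t_0 + h_1; L^2)}$ and the monotonicity of the $L^4(L^{12})$ norm under enlargement of the time interval, I would obtain
$$
\|u\|_{L^4(t_0, t_0 + h_1; L^{12})} \leq \|\tilde u\|_{L^4(t_0, t_0 + h_2; L^{12})} \leq C_{h_2}^{\ast}\bigl(\|(u_0, u_1)\|_{\mathcal{E}_0} + \|G\|_{L^1(t_0, t_0 + h_1; L^2)}\bigr).
$$
Thus $C_{h_2}^{\ast}$ is admissible on intervals of length $h_1$, so $C_{h_1}^{\ast} \leq C_{h_2}^{\ast}$. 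Setting $C_h := C_h^{\ast}$ then gives the desired nondecreasing family.

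I do not anticipate any substantive obstacle; the argument is essentially a linearity-plus-uniqueness manipulation. The only point worth flagging is that the infimum $C_h^{\ast}$ must be verified to be itself admissible, which is a soft limiting step in \eqref{eq:strichartz}. In particular, the monotonicity obtained here does not require the Christ--Kiselev lemma, which the introduction mentions in connection with the derivation of the Strichartz estimate \eqref{eq:strichartz} itself; here it is enough to exploit the zero-extension of $G$, uniqueness of weak solutions of \eqref{eq:linear} in $C_{\mathrm{loc}}([t_0,\infty); H^1_0)$, and the trivial fact that restricting the $L^4(L^{12})$ norm to a subinterval decreases it.
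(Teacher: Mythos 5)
Your argument is correct, and it takes a genuinely different route from the paper. The paper proves monotonicity constructively: it isolates the homogeneous constant $D_h$ (which is nondecreasing essentially by the restriction argument you use), writes the solution via the Duhamel formula, bounds the non-retarded operator $\int_0^h S_u(\delta-s)(0,G(t_0+s))\,ds$ using the group property and the growth bound $\|S(-s)\|\leq e^{\beta s}$, and then invokes the Christ--Kiselev lemma to pass to the retarded Duhamel integral, ending with an explicit nondecreasing bound of the form $C D_h e^{\beta h}$. You instead observe that monotonicity is a soft structural fact: pass to the optimal constant $C_h^{\ast}$ (the infimum is itself admissible by the limiting step you flag), extend the forcing by zero to the longer interval, use uniqueness of the weak solution of \eqref{eq:linear} to identify the two solutions on the shorter interval, and conclude from $\|\tilde G\|_{L^1}=\|G\|_{L^1}$ together with monotonicity of the $L^4(L^{12})$ norm in the time interval. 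Each step checks out, and your approach is more elementary and more general --- it applies to any causal estimate of this shape and avoids harmonic analysis entirely; what the paper's route buys in exchange is an explicit quantitative form of $C_h$ in terms of the homogeneous constant, which is not needed for the statement of the proposition.
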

The above proposition will be proved with the use of the following theorem known as the Christ--Kiselev lemma, see e.g. \cite[Lemma 3.1]{Sogge}.
\begin{theorem}\label{th:kiselev}
  Let $X,Y$ be Banach spaces and assume that $K(t,s)$ is a continuous function taking values in $B(X,Y)$, the space of linear bounded mappings from $X$ to $Y$. Suppose that $-\infty \leq a < b \leq \infty $ and set
  $$ Tf(t) = \int_a^b K(t,s) f(s)\, ds,$$
  $$ Wf(t) = \int_a^t K(t,s) f(s)\, ds.$$
  Then if for $1\leq p<q\leq \infty$ there holds
  $$\norm{Tf}_{L^q(a,b;Y)} \leq C \norm{f}_{L^p(a,b;X)} ,$$
  then
  $$\norm{Wf}_{L^q(a,b;Y)} \leq \overline{C}
  \norm{f}_{L^p(a,b;X)},\;
  \text{with $\overline{C}=2C\frac{2^{2\left(\frac{1}{q} - \frac{1}{p} \right)} }{1 - 2^{\frac{1}{q} -\frac{1}{p} } } $}.$$
\end{theorem}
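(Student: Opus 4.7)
The plan is to execute the dyadic-decomposition argument of Christ and Kiselev, adapted to the distribution of the $L^p$-mass of $f$. The operator $Wf$ differs from $Tf$ by integration over the lower triangle $\{(t,s):s<t\}$, which I will tile by off-diagonal rectangles of ``equal $L^p$-mass of $f$'' on which the $T$-type bound of the hypothesis is directly available.

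First I would normalize $\|f\|_{L^p(a,b;X)}=1$ by homogeneity and introduce the cumulative-mass function $F(t)=\int_a^t\|f(s)\|_X^p\,ds$. Using its preimages I would build, for each level $n\geq 0$, a partition of $[a,b)$ into $2^n$ intervals $\{I_{n,k}\}_{k=1}^{2^n}$ each carrying $L^p$-mass exactly $2^{-n/p}$, nested in the usual dyadic manner so that $I_{n-1,j}=I_{n,2j-1}\cup I_{n,2j}$. Next I would verify the geometric identity
$$\{(t,s):a<s<t<b\}=\bigcup_{n\geq 1}\bigcup_{j=1}^{2^{n-1}} I_{n,2j}\times I_{n,2j-1},$$
valid up to a set which contributes nothing to $Wf$ because $f$ vanishes there $s$-almost everywhere. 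This is proved by reading off, for each $(t,s)$ with $s<t$, the unique smallest level $n$ at which $s$ and $t$ fall into distinct children of a common parent $I_{n-1,j}$; since $s<t$, they lie in the left and right child respectively.

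Using this decomposition I would write $Wf=\sum_{n\geq 1} W_n f$ with
$$W_n f(t):=\sum_{j=1}^{2^{n-1}}\chi_{I_{n,2j}}(t)\,T\bigl(f\chi_{I_{n,2j-1}}\bigr)(t).$$
For fixed $n$ the $t$-supports $I_{n,2j}$ are pairwise disjoint, so the hypothesis on $T$ together with the equal-mass property yields
$$\|W_n f\|_{L^q(a,b;Y)}^q=\sum_{j=1}^{2^{n-1}}\|T(f\chi_{I_{n,2j-1}})\|_{L^q(I_{n,2j};Y)}^q\leq C^q\sum_{j=1}^{2^{n-1}}\|f\chi_{I_{n,2j-1}}\|_{L^p}^q=C^q\cdot 2^{n-1}\cdot 2^{-nq/p}.$$
Taking $q$-th roots produces a geometric factor $2^{n(1/q-1/p)}$ with negative exponent (since $p<q$), and summing in $n\geq 1$ gives $\|Wf\|_{L^q}\leq\overline{C}\|f\|_{L^p}$ with $\overline{C}$ of the stated form after elementary bookkeeping of the factors of $2$.

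The main obstacle I expect is the measure-theoretic setup of the partition. If $\|f\|_X$ vanishes on a subinterval then $F$ is constant there and the preimages of the dyadic points $k2^{-n}$ are intervals, not singletons; I would pick the rightmost preimage in each case. The pairs $(t,s)$ escaping the triangle decomposition then lie in flat regions of $F$ on which $f=0$ almost everywhere in $s$, and hence they contribute nothing either to $Wf$ or to any of the $L^p$- and $L^q$-norms above. With this measure-theoretic care the remainder of the argument is a routine chain of Minkowski inequalities and geometric summation.
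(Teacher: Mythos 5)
The paper does not prove this statement at all: it is the Christ--Kiselev lemma, quoted verbatim from Smith and Sogge \cite[Lemma 3.1]{Sogge} and used as a black box, so there is no in-paper argument to compare against. Your proposal is the standard (and correct) proof of that lemma: the dyadic partition of $[a,b)$ adapted to the cumulative mass $F(t)=\int_a^t\|f(s)\|_X^p\,ds$, the Whitney-type tiling of the lower triangle by the products $I_{n,2j}\times I_{n,2j-1}$ of sibling intervals, the disjointness of the $t$-supports at each level giving $\|W_nf\|_{L^q}\leq C\,2^{(n-1)/q}2^{-n/p}$, and geometric summation in $n$ using $p<q$. Your handling of the degenerate case (flat stretches of $F$, rightmost preimages, and the observation that uncovered pairs $(t,s)$ carry no mass of $f$) is exactly the right measure-theoretic care. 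Two small points worth tightening if you write this out in full: for $q=\infty$ the identity $\|W_nf\|_{L^q}^q=\sum_j\|\cdot\|_{L^q(I_{n,2j})}^q$ must be replaced by the corresponding supremum, which still yields $\|W_nf\|_{L^\infty}\leq C2^{-n/p}$ and the same geometric sum; and the pointwise identity $Wf=\sum_n W_nf$ involves an interchange of an infinite sum with the $s$-integral, which one justifies first for $f$ in a dense class (e.g.\ bounded with compact support) and then extends by the uniform bound. Your bookkeeping in fact yields $\overline{C}=C\,2^{-1/p}/\bigl(1-2^{1/q-1/p}\bigr)$, which is no larger than the constant stated in the theorem; since only finiteness of $\overline{C}$ (depending on $C,p,q$) is used in the paper, this discrepancy is immaterial.
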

\begin{proof}
If $G\equiv 0$ then we denote the corresponding constant by $D_h$, i.e. 
$$
\norm{u}_{L^4(t_0,t_0+h;L^{12})}
\leq
D_h \norm{(u_0,u_1)}_{\mathcal{E}_0}.
$$
Clearly, the function $[0,\infty) \ni h\to D_h \in [0,\infty)$ can be made nondecreasing. We will prove that \eqref{eq:strichartz} holds with $C_h$, a monotone function of $D_h$. If the family $\{S(t)\}_{t\in \R}$ of mappings $S(t):\mathcal{E}_0\to \mathcal{E}_0$ is the solution group for the linear homogeneous problem (i.e. if $G\equiv 0$) then we denote  $S(t)(u_0,u_1) = 
(S_u(t)(u_0,u_1),S_{u_t}(t)(u_0,u_1))$. Let $t_0\in \R$ and $\delta > 0$. Using the Duhamel formula for equation $\eqref{eq:linear} $ we obtain
$$u(t_0+\delta) = S_u(\delta)(u_0,u_1) + \int_{0}^{\delta}
S_u(\delta - s)(0,G(t_0+s))\, ds.$$
Applying the $L^4(0,h;L^{12})$ norm with respect to $\delta$  to both sides we obtain
$$
\norm{u}_{L^4(t_0,t_0+h;L^{12}) }\leq
D_{h}\norm{(u_0,u_1)}_\mathcal{E}+
\norm{P_1}_{L^4(0,h;L^{12})},
$$
for every $h>0$, 
where
$
P_1(\delta) = \int_{0}^{\delta}
S_u(\delta - s)(0,G(t_0+s))ds$.
We will estimate the Strichartz norm of $P_1$ using Theorem \ref{th:kiselev} with $X=L^{2},Y=L^{12},q=4,p=1,a=0,b=h$. If $\Pi_N:L^2\to V_N$ is $L^2$-orthogonal projection, then 
$S_u(h-s)(0,\Pi_N(\cdot))$ is a continuous function of $(h,s)$ taking its values in $B(L^2,L^{12})$. Hence the estimate should be derived separately for every $N$, and, since it is uniform with respect to this $N$ it holds also in the limit. We skip this technicality and proceed with the formal estimates only.  We set
$P_2(\delta) = \int_{0}^{h}
S_u(\delta - s)(0,G(t_0+s))ds$,
and we estimate
\begin{align*}
& \norm{P_2}_{L^4(0,h;L^{12})} \leq
\int_{0}^{h}
\norm{ S_u(\delta - s)(0,G(t_0+s))}_{L^4(0,h;L^{12})}\,ds\\
&\ \ \  =
\int_{0}^{h}
\norm{
S_u(\delta)S(-s)(0,G(t_0+s))
}_{L^4(0,h;L^{12})}\, ds \leq
\int_{0}^{h}
D_{h}\norm{S(-s)(0,G(t_0+s))}_{\mathcal{E}_0}
\, ds,
\end{align*}
where in the last inequality we used the homogeneous Strichartz estimate.
Observe that there exists $\beta>0$ such that there holds
$$\norm{S(-s)(u_0,u_1)}_{\mathcal{E}_0} 
\leq 
e^{s \beta}\norm{(u_0,u_1)}_{\mathcal{E}_0}.
$$
We deduce 
$$
\norm{P_2}_{L^4(0,h;L^{12})} \leq D_{h} e^{\beta h} \norm{G}_{L^1(t_0,t_0+h,L^2)}
.
$$
Hence, by Theorem \ref{th:kiselev} we obtain
$
\norm{P_1}_{L^4(0,h;L^{12})} 
\leq
C D_{h} e^{\beta h} \norm{G}_{L^1(t_0,t_0+h,L^2)}$ for every $h > 0$, and the proof is complete.

\end{proof}
The following result will be useful in the bootstrap argument on the attractor regularity.
\begin{lemma}\label{le:fractionlstrichartz}
Let $(u_0,u_1)\in \mathcal{E}_s$ and $G \in L^1_{loc}([t_0,\infty);\mathbb{H}^s)$ for $s\in (0,1]$. Then the weak solution of  \eqref{eq:linear} has regularity $u \in C_{loc}([t_0,\infty);\mathbb{H}^{s+1})$ and $u_t\in C_{loc}([t_0,\infty);\mathbb{H}^s)$, Moreover, the following estimates hold
\begin{align*}
 &   \|(u(t),u_t(t))\|_{\mathcal{E}_s} \leq C\left(\norm{(u_0,u_1)}_{\mathcal{E}_s}e^{-\alpha(t-t_0)} + \int_{t_0}^t e^{-\alpha(t-s)}\norm{G(s)}_{\mathbb{H}^s} ds\right),\\
& \|u\|_{L^4(0,h;W^{s,12})} \leq C_h\left(\norm{(u_0,u_1)}_{\mathcal{E}_s} +
\norm{G}_{L^1(t_0,t_0+h;\mathbb{H}^{s})}\right).
\end{align*}

\end{lemma}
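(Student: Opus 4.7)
The plan is to reduce to the already-handled base case $s=0$ by conjugating the equation with the spectral fractional Laplacian. Set $v:=(-\Delta)^{s/2}u$ and $H:=(-\Delta)^{s/2}G$. Since $(-\Delta)^{s/2}$ is defined via the functional calculus of the Dirichlet Laplacian, it commutes with $\partial_t$ and with $-\Delta$ and is an isometric isomorphism $\mathbb{H}^{r+s}\to\mathbb{H}^r$ on the admissible range of $r$. Thus $(v,v_t)$ formally solves
\begin{equation*}
v_{tt}+v_t-\Delta v=H,\quad v|_{\partial\Omega}=0,\quad v(t_0)=(-\Delta)^{s/2}u_0,\quad v_t(t_0)=(-\Delta)^{s/2}u_1,
\end{equation*}
with $(v(t_0),v_t(t_0))\in\mathcal{E}_0$ and $H\in L^1_{loc}([t_0,\infty);L^2)$. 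To make the substitution rigorous I would first work on the Galerkin projection onto $V_N$, on which $(-\Delta)^{s/2}$ acts simply by multiplication of the coefficient $\widehat{u}_k$ by $\lambda_k^{s/2}$, then pass to $N\to\infty$ using linearity and uniqueness of the weak solution of \eqref{eq:linear}.

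Proposition \ref{prop:linearestimate} applied to $v$ immediately yields $v\in C_{loc}([t_0,\infty);H^1_0)$ and $v_t\in C_{loc}([t_0,\infty);L^2)$ together with the exponential energy estimate on $\mathcal{E}_0$. Translating norms back through the identities $\|(v(t),v_t(t))\|_{\mathcal{E}_0}=\|(u(t),u_t(t))\|_{\mathcal{E}_s}$ and $\|H(r)\|=\|G(r)\|_{\mathbb{H}^s}$ (which follow from $\|(-\Delta)^{1/2}w\|=\|\nabla w\|$ on $H^1_0$ and the definition of $\mathbb{H}^s$), one obtains the claimed regularity $u\in C_{loc}([t_0,\infty);\mathbb{H}^{s+1})$, $u_t\in C_{loc}([t_0,\infty);\mathbb{H}^s)$ and the first of the two asserted inequalities.

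For the Strichartz bound, I would invoke Lemma \ref{lemma:strichartz} on the same $v$, producing
\begin{equation*}
\|(-\Delta)^{s/2}u\|_{L^4(t_0,t_0+h;L^{12})}=\|v\|_{L^4(t_0,t_0+h;L^{12})}\leq C_h\bigl(\|(u_0,u_1)\|_{\mathcal{E}_s}+\|G\|_{L^1(t_0,t_0+h;\mathbb{H}^s)}\bigr).
\end{equation*}
The remaining step, and the main technical obstacle, is to upgrade the left-hand side to $\|u\|_{L^4(t_0,t_0+h;W^{s,12})}$, i.e.\ to obtain the pointwise-in-$t$ inequality $\|u(t)\|_{W^{s,12}}\leq C\|(-\Delta)^{s/2}u(t)\|_{L^{12}}$. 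This is the $L^{12}$ counterpart of the $L^2$ identification of \cite{Antil} recalled in Section \ref{sec:2}: for $s\in(0,1)$ it follows from the $L^p$-boundedness of the Riesz-type transforms associated with the Dirichlet Laplacian on a bounded smooth domain (obtained via the functional calculus of $-\Delta$), while for $s=1$ it reduces to the boundedness of $\nabla(-\Delta)^{-1/2}$ on $L^{12}(\Omega)$. Raising the resulting pointwise inequality to the fourth power and integrating over $(t_0,t_0+h)$ absorbs the constant into $C_h$ and closes the argument.
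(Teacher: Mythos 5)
Your proposal follows essentially the same route as the paper: conjugate the equation by $(-\Delta)^{s/2}$ (verified coefficient-wise on the spectral basis, exactly as you suggest doing on the Galerkin projections), identify $w=(-\Delta)^{s/2}u$, and then apply Proposition \ref{prop:linearestimate} and Lemma \ref{lemma:strichartz} to this transformed solution. You are in fact more explicit than the paper about the final step of converting the bound on $\|(-\Delta)^{s/2}u\|_{L^{12}}$ into one on $\|u\|_{W^{s,12}}$, a point the paper's proof leaves implicit.
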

\begin{proof}
The problem
\begin{align}
\begin{cases}
& w_{tt}(t)+w_t(t)-\Delta w(t) = (-\Delta)^{s/2}G(t)\; \text{for}\; (x,t)\in \Omega\times(t_0,\infty),\\
&w(t,x) = 0\; \text{for}\; x\in \partial \Omega ,
\\&w(t_0) = (-\Delta)^{s/2}u_0,
\\&w_t(t_0) = (-\Delta)^{s/2}u_1,
\end{cases}
\end{align}
has the unique weak solution $w\in C_{loc}([t_0,\infty);H^1_0)$ with the derivative $w_t\in C_{loc}([t_0,\infty);L^2)$. It is enough to observe that $$\widehat{w}_k(t) =\lambda_k^{\frac{s}{2}} \widehat{u}_k(t)\ \ \textrm{for every}\ \ k \in \mathbb{N}.$$ Testing weak solutions $w,u$ with $e_kv(t)$, where $v(t)\in C_0^{\infty}([t_0,t_1))$ and using the du Bois-Reymond lemma 
we get systems
$$
\begin{cases}
\widehat{u}_k''+\widehat{u}_k''+\lambda_k \widehat{u}_k = (G(t),e_k),
\\\widehat{u}_k(t_0)= \widehat{u_0}_k,
\\\widehat{u}_k'(t_0)= \widehat{u_1}_k,
\end{cases}
\ \ \ 
\begin{cases}
\widehat{w}_k''+\widehat{w}_k'+\lambda_k \widehat{w}_k = (\fraclap{s}{2}G(t),e_k) = \lambda_k^{\frac{s}{2}}(G(t),e_k),
\\\widehat{w}_k (t_0)=  (\fraclap{s}{2}u_0, e_k) = \lambda_k^{\frac{2}{s}}\widehat{u_0}_k,
\\\widehat{w}_k'(t_0)= (\fraclap{s}{2} u_1,e_k)=
\lambda_k^{\frac{2}{s}}\widehat{u_1}_k.
\end{cases}
$$
The difference $\overline{w}_k(t)=\widehat{w}_k(t) -\lambda_k^{\frac{s}{2}} \widehat{u}_k(t)$ solves the problem
$$
\begin{cases}
\overline{w}_k''+\overline{w}_k'+\lambda_k\overline{w}_k = 0,\\
\overline{w}_k(t_0) = 0,\\
\overline{w}_k'(t_0) = 0.
\end{cases}
$$
So $\overline{w}_k(t)=0$ for every $t\in [t_0,\infty)$. The assertion follows from Proposition \ref{prop:linearestimate} and Lemma \ref{lemma:strichartz}.
\end{proof}

\subsection{Shatah--Struwe solutions and their properties}

This section recollects the results from \cite{Savostianov}. The non-autonomous generalizations of these results are straightforward so we skip some of the proofs which follow the lines of the corresponding results from \cite{Savostianov}. 
The following remark follows from the Gagliardo--Nirenberg interpolation inequality and the Sobolev embedding $H^1_0\hookrightarrow L^6$.
\begin{remark}
If $u\in L^4(0,t;L^{12})$ and $u\in L^\infty(0,t;H^1_0)$ then
$$
\norm{u}_{L^5(0,t;L^{10})}
\leq \norm{u}_{L^4(0,t;L^{12})}^{\frac{4}{5}} 
\norm{u}_{L^\infty(0,t;H^1_0)}^\frac{1}{5} .
$$
\end{remark}
We define the Shatah--Struwe solution of problem \eqref{eq:prblm}.
\begin{definition}\label{def:shatah}Let $(u_0,u_1)\in \mathcal{E}_0$. A weak solution of problem \eqref{eq:prblm}, given by Definition \ref{def:problem} is called a Shatah--Struwe solution if $u\in L^4_{loc}([0,\infty);L^{12})$.
\end{definition}

\begin{proposition}\label{prop:uniq}
Shatah--Struwe solutions to problem \eqref{eq:prblm} given by Definition \ref{def:shatah} are unique and the mapping $\mathcal{E}_0 \ni (u_0,u_1) \mapsto (u(t),u_t(t))\in \mathcal{E}_0$ is continuous for every $t>0$.
\end{proposition}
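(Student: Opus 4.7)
The plan is to prove both assertions simultaneously by establishing an $\mathcal{E}_0$--Lipschitz estimate for the difference of two Shatah--Struwe solutions. Let $u, v$ be Shatah--Struwe solutions to \eqref{eq:prblm} on $[0, T]$ corresponding to data $(u_0, u_1), (v_0, v_1) \in \mathcal{E}_0$, and set $w := u - v$. Then $w$ is the weak solution of the linear damped wave equation
$$w_{tt} + w_t - \Delta w = f_\varepsilon(t, u) - f_\varepsilon(t, v), \qquad (w(0), w_t(0)) = (u_0 - v_0, u_1 - v_1).$$
Applying Proposition \ref{prop:linearestimate} with $G(s) = f_\varepsilon(s, u(s)) - f_\varepsilon(s, v(s))$ gives
$$\|(w(t), w_t(t))\|_{\mathcal{E}_0} \leq C\|(u_0 - v_0, u_1 - v_1)\|_{\mathcal{E}_0} + C \int_0^t \|f_\varepsilon(s, u(s)) - f_\varepsilon(s, v(s))\|_{L^2}\, ds.$$

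The crucial step is to bound the integrand in terms of a locally integrable weight times $\|(w(s), w_t(s))\|_{\mathcal{E}_0}$. By the mean value theorem and the subquintic growth \hyperref[as:H5]{(H5)},
$$|f_\varepsilon(s, u) - f_\varepsilon(s, v)| \leq C(1 + |u|^{4-\kappa} + |v|^{4-\kappa})|w|.$$
To bound the $L^2$ norm of the right-hand side I apply a three-factor H\"{o}lder decomposition: I write $|u|^{2(4-\kappa)} = |u|^{a} \cdot |u|^{b}$ with $a + b = 2(4-\kappa)$ and pair the three factors $|u|^{a}$, $|u|^{b}$, $|w|^{2}$ with the exponents $12$ (Strichartz), $6$ (via $H^1_0 \hookrightarrow L^6$), and $6$. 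Matching the H\"{o}lder triple forces $a = 8 - 4\kappa$, $b = 2\kappa$ for $\kappa \in (0, 2]$ (the case $\kappa > 2$ is easier and uses only the Sobolev embedding), and yields
$$\int_\Omega |u|^{2(4-\kappa)} |w|^2\, dx \leq \|u\|_{L^{12}}^{8-4\kappa}\|u\|_{L^6}^{2\kappa}\|w\|_{L^6}^2,$$
hence
$$\|f_\varepsilon(s, u) - f_\varepsilon(s, v)\|_{L^2} \leq C\bigl(1 + \|u\|_{L^{12}}^{4-2\kappa}\|u\|_{H^1_0}^{\kappa} + \|v\|_{L^{12}}^{4-2\kappa}\|v\|_{H^1_0}^{\kappa}\bigr)\|w\|_{H^1_0}.$$

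By Definition \ref{def:shatah} we have $u, v \in L^4(0, T; L^{12})$, while Proposition \ref{prop:abball} guarantees $u, v \in L^\infty(0, T; H^1_0)$, so the weight $\phi(s) := 1 + \|u(s)\|_{L^{12}}^{4-2\kappa}\|u(s)\|_{H^1_0}^{\kappa} + \|v(s)\|_{L^{12}}^{4-2\kappa}\|v(s)\|_{H^1_0}^{\kappa}$ lies in $L^{4/(4-2\kappa)}(0, T) \subset L^1(0, T)$, the integrability being ensured precisely by the subquintic gap $4 - 2\kappa < 4$. Substituting into the linear estimate and invoking Gronwall's inequality produces
$$\|(w(t), w_t(t))\|_{\mathcal{E}_0} \leq C \|(u_0 - v_0, u_1 - v_1)\|_{\mathcal{E}_0} \exp\Bigl(C \int_0^T \phi(s)\, ds\Bigr),$$
from which uniqueness (by setting the two initial data equal) and continuous dependence in $\mathcal{E}_0$ both follow. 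The main technical point is the three-fold H\"{o}lder split that simultaneously consumes the Shatah--Struwe regularity and the energy norm while keeping the exponent of the Strichartz factor strictly below $4$, which is what permits the Gronwall step.
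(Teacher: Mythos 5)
Your argument is correct and rests on exactly the same mechanism as the paper's proof: a Gronwall estimate for the energy of the difference $w=u-v$, in which the Lipschitz constant of $u\mapsto f_\varepsilon(t,u)$ from $H^1_0$ to $L^2$ is controlled by a time-integrable weight thanks to the $L^4(0,T;L^{12})$ regularity in Definition \ref{def:shatah}. The packaging differs in two minor ways. First, the paper tests the difference equation with $w_t$ and runs a differential Gronwall inequality on $\|w_t\|^2+\|\nabla w\|^2$, whereas you route through Proposition \ref{prop:linearestimate} (Duhamel) and an integral Gronwall lemma with $L^1$ weight; the two are equivalent, and your version has the small advantage of not having to justify testing with $w_t$. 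Second, the paper simply bounds $|u|^{4-\kappa}\leq 1+|u|^4$ and applies H\"older with exponents $6,3,2$, producing the weight $1+\|u\|_{L^{12}}^4+\|v\|_{L^{12}}^4\in L^1(0,T)$, while you keep $\kappa$ and perform the three-factor split giving $\|u\|_{L^{12}}^{4-2\kappa}\|u\|_{H^1_0}^{\kappa}$; your exponent bookkeeping ($a=8-4\kappa$, $b=2\kappa$, integrability in $L^{4/(4-2\kappa)}$) is correct, but nothing is gained from retaining $\kappa$ here, since the cruder bound already yields an integrable weight. Two small remarks: the $L^\infty(0,T;H^1_0)$ bound on $u,v$ you attribute to Proposition \ref{prop:abball} is in fact part of Definition \ref{def:problem} of a weak solution (Proposition \ref{prop:abball} concerns Galerkin solutions, whose equivalence with Shatah--Struwe solutions is only proved later); and, as in the paper's own proof, the continuity assertion strictly requires the observation that the exponential factor is bounded uniformly for initial data in a bounded set, which follows from the a priori energy and Strichartz bounds.
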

\begin{proof}
Let $u,v$ be Shatah--Struwe solutions to Problem \eqref{eq:prblm} with the initial data $(u_0,u_1)$ and $(v_0,v_1)$, respectively.  Their difference $w:=u-v$ satisfies the following equation
$$w_{tt}(t)+w_t(t)-\Delta w(t) = f_\varepsilon(t,u(t))-f_\varepsilon(t,v(t))=w\frac{\partial p_{\varepsilon}(t,\theta u + (1-\theta)v)}{\partial u} .$$
Testing this equation with $w_t$ yields
$$
\frac{1}{2}\dt
\left(\norm{w_{t}}^2+\norm{\nabla w}^2\right)+ \norm{w_{t}}^2 =\left(w\frac{\partial p_\varepsilon(t,\theta u + (1-\theta)v)}{\partial u},w_t\right).
$$
Assumption \hyperref[as:H5]{(H5)} gives inequality

$$
\frac{1}{2}\dt\left(\norm{w_{t}}^2+\norm{\nabla w}^2\right)
\leq
C\int_{\Omega}w(1+|u|^4+|v|^4)w_tdx
$$
Then by using the H{\"o}lder inequality with exponents $\frac{1}{6},\frac{1}{3},\frac{1}{2}$ and the Sobolev embedding $L_6\hookrightarrow H_0^1$ we obtain
$$
\dt\left(\norm{w_{t}}^2+\norm{\nabla w}^2\right)
\leq
C\left(\norm{\nabla w}^2+\norm{w_t}^2\right)\left(1+
\norm{u}_{L^{12}}^{4}+\norm{v}_{L^{12}}^4\right).
$$
Because $v,u$ are Shatah–Struwe solution, i.e. $u,v\in L^4_{loc}([0,\infty);L^{12})$, it is possible to use integral form of the Grönwall inequality which gives us
$$\norm{\nabla w}^2+\norm{w_t}^2
\leq
(\norm{\nabla w_0}^2+ \norm{w_1}^2) 
\exp\left(C\left(t +\int_{0}^t\norm{v}_{L_{12}}^{4}+\norm{w}_{L_{12}}^{4}dt\right)\right),
$$
for $t\in[0,\infty)$, hence the assertion follows.
\end{proof}

\begin{lemma}\label{lem:savostianov}
Every weak solution of problem \eqref{eq:prblm} is of Galerkin type if and only it is a Shatah--Struwe solution. Moreover for every $t>0$ there exists a constant $C_t>0$ such that for every solution $u$ with arbitrary $p_\varepsilon \in\mathcal{H}_{[0,1]}$ treated as right-hand side in \eqref{eq:prblm} contained  in the absorbing set $B_0$ there holds
$$
\norm{u}_{L^4(0,t;L^{12} )}\leq C_t.
$$
\end{lemma}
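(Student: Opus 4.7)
The plan is to establish the $L^4(L^{12})$ bound first at the level of the Galerkin approximations via a Strichartz bootstrap, and then deduce the equivalence of Galerkin-type and Shatah--Struwe solutions from uniqueness.

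\textbf{Strichartz bound on Galerkin approximations.} Fix $p\in\mathcal{H}_{[0,1]}$ and a Galerkin approximation $u^N$ whose initial data lies in $B_0$. By Proposition \ref{prop:abball}, $\|(u^N(t),u^N_t(t))\|_{\mathcal{E}_0}\le R$ uniformly in $t\ge 0$ and in $N$. Treating $u^N$ as a weak solution of the linear problem \eqref{eq:linear} with source $G(t)=\Pi_N p(t,u^N(t))$ (where $\Pi_N$ is the $L^2$-projection onto $V_N$, and $\|\Pi_N G\|\le \|G\|$), Lemma \ref{lemma:strichartz} gives
$$\|u^N\|_{L^4(t_0,t_0+h;L^{12})}\leq C_h\Big(R+\int_{t_0}^{t_0+h}\|p(s,u^N(s))\|\,ds\Big).$$
Integrating the pointwise growth $|p(s,u)|\le C(1+|u|^{5-\kappa})$ coming from Proposition \ref{prop:p}, then applying the Gagliardo--Nirenberg interpolation $\|u\|_{L^{2(5-\kappa)}}\le C\|u\|_{L^6}^{a}\|u\|_{L^{12}}^{1-a}$ with $a=(1+\kappa)/(5-\kappa)$, the Sobolev embedding $H^1_0\hookrightarrow L^6$, and Hölder in time, one obtains
$$\int_{t_0}^{t_0+h}\|p(s,u^N(s))\|\,ds\le Ch+Ch^{\kappa/2}\|u^N\|_{L^4(t_0,t_0+h;L^{12})}^{4-2\kappa}.$$

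\textbf{Bootstrap and iteration.} Set $X(h):=\|u^N\|_{L^4(t_0,t_0+h;L^{12})}$. Exploiting the monotonicity of $h\mapsto C_h$ established in the preceding proposition, for $h\in[0,1]$ the above estimates combine into a bootstrap inequality of the form $X(h)\le A + B h^{\kappa/2}X(h)^{4-2\kappa}$ with constants $A,B>0$ depending on $R$ and $C_1$ but independent of $t_0$ and $N$. Since $u^N$ is continuous in time with values in the finite-dimensional space $V_N$, the map $h\mapsto X(h)$ is continuous with $X(0)=0$. Choosing $h_*\in(0,1]$ so small that $B h_*^{\kappa/2}(2A)^{3-2\kappa}\le 1/2$, the continuation principle \cite[Proposition 1.21]{Tao} yields $X(h_*)\le 2A$. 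Because $h_*$ is independent of the starting time $t_0$ and of $N$, and the energy bound persists for all $t\ge 0$, iterating this estimate on $[0,t]$ over $\lceil t/h_*\rceil$ consecutive subintervals of length $h_*$ produces the uniform bound $\|u^N\|_{L^4(0,t;L^{12})}\le C_t$.

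\textbf{Equivalence and closure.} For a Galerkin-type weak solution $u$ with initial data in $B_0$, weak-$*$ lower semicontinuity of the norm gives $\|u\|_{L^4(0,t;L^{12})}\le \liminf_N\|u^N\|_{L^4(0,t;L^{12})}\le C_t$, so $u$ is Shatah--Struwe and the announced bound holds. Conversely, if $u$ is a Shatah--Struwe solution and $\tilde u$ is any Galerkin-type solution with the same initial data (whose existence is already provided), then $\tilde u$ is Shatah--Struwe by the previous step, and Proposition \ref{prop:uniq} forces $u=\tilde u$, so $u$ is of Galerkin type. The principal obstacle is the interpolation step in the first paragraph: the arithmetic identity $(5-\kappa)(1-a)=4-2\kappa$ produces an exponent strictly below $4$ on $X(h)$ and a positive power $h^{\kappa/2}$ on the nonlinear term only because $\kappa>0$, which is precisely the subquintic feature that breaks down at $\kappa=0$ and necessitates the trajectory-attractor techniques in the critical regime.
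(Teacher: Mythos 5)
Your proof is correct and follows essentially the same route as the paper: the linear Strichartz estimate of Lemma \ref{lemma:strichartz}, a subquintic interpolation of the source term (your exponents $4-2\kappa$ and $h^{\kappa/2}$ versus the paper's $4-\tfrac{4\kappa}{5}$ and $t^{\kappa/5}$ via the $L^5L^{10}$ route are equivalent in effect), the continuation principle of \cite[Proposition 1.21]{Tao} on a time step depending only on the energy bound $R$, and iteration over subintervals using Proposition \ref{prop:abball}. Your only departures are cosmetic but welcome: you run the bootstrap directly on the Galerkin approximations (which the paper only gestures at) rather than on the formal splitting $u=v+w$, and you make the ``if and only if'' explicit via Proposition \ref{prop:uniq}.
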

\begin{proof}
Let $u$ be the solution of the Galerkin type with the initial data $(u_0,u_1)\in \mathcal{E}_0$. From assumption \hyperref[as:H3]{(H5)} we see that
$$\norm{p_{\varepsilon}(t,u)}\leq  C(1+\norm{|u|^{5-\kappa}})= C(1+\norm{u}^{5-\kappa}_{L^{2(5-\kappa)}} )
\leq
C(1+ \norm{u}^{5-\kappa}_{L^{10}}) .$$
We assume that $t\in[0,1]$.
From the H{\"o}lder inequality we obtain
\begin{align*}
  &  \int_{0}^{t}\norm{p_{\varepsilon}(s,u)}ds
\leq
Ct +C\int_{0}^{t}\norm{u}^{5-\kappa}_{L^{10}}ds
\leq
C\left(\left(\int_{0}^{t}\norm{u}^{5}_{L^{10}}ds\right)^{\frac{5-\kappa}{5}}
\left(\int_{0}^{t}1dt\right)^{\frac{\kappa}{5}} + t\right)\\
&\ \ =
C \left(\norm{u}_{L^5(0,t; L^{10})}^{5-\kappa }t^{\frac{\kappa}{5}} + t \right)
\leq
C\left(\norm{u}_{L^5(0,t;L^{10}) }^{5 - \kappa} + 1\right)t^{\frac{\kappa}{5}}
\\
&\ \ \leq
CR^{\frac{1}{5}}\left(\norm{u}_{L^4(0,t;L^{12})}^{4 - \frac{4\kappa}{5}} + 1 \right)t^{\frac{\kappa}{5}}
\end{align*}
where $R$ is the bound of the $L^\infty(0,t;H^1_0)$ norm of $u$.
We split $u$ as the sum 
$ u = v+ w$ where $v,w$  solve the following problems
$$
\begin{cases}
v_{tt}+v_t-\Delta v = 0,
\\v(t,x) = 0\; \text{for}\; x\in \partial \Omega,
\\v(0,x) = u_0(x),
\\v_t(0,x) = u_1(x),
\end{cases}
\qquad\qquad
\begin{cases}
w_{tt}+w_t-\Delta w = p_\epsilon(t,u),
\\w(t,x) = 0\; \text{for}\; x\in \partial \Omega,
\\w(0,x) = 0,
\\w_t(0,x) = 0.
\end{cases}
$$
From the Strichartz estimate in Lemma \ref{lemma:strichartz} we deduce
$$
\norm{v}_{L^4(0,t;L^{12}) } \leq C_1 \norm{(u_0,u_1)}_\mathcal{E},
$$
and
$$
\norm{w}_{L^4(0,t;L^{12}) }\leq
C R^{\frac{1}{5}}
\left(
\norm{w}_{L^4(0,t;L^{12}) }^{4 -\frac{4\kappa}{5}}
+\left(C_1\norm{(u_0,u_1)}\right) ^{4 -\frac{4\kappa}{5}}
+1\right)
t^{\frac{\kappa}{5}}.
$$
We define function $Y(t)= \norm{w}_{L^4(0,t;L^{12})}$ for $t\in[0,1]$.
Formally we do not know if this function is well defined, so to make the proof rigorous we should proceed for Galerkin approximation, cf. \cite{Savostianov}. We continue the proof in formal way. The function  $Y(t)=\norm{w}_{L^4(0,t;L^{12})}$ is continuous with  $Y(0)=0$ and there holds
$$
Y(t)\leq
CR^{\frac{1}{5}}
(
Y(t)^{4 -\frac{4\kappa}{5}} 
+(C_1\norm{(u_0,u_1)}) ^{4 -\frac{4\kappa}{5}}
+1)
t^{\frac{\kappa}{5}}
.
$$
We define
$$t^{\frac{\kappa}{5}}_{\max}
=
\min\left\{
\frac{1}
{2 C R^\frac{1}{5}(
(C_1 R )^{4-\frac{4\kappa}{5}}+2
)}
,
1
\right\}
,
\text{ where }
R \geq \norm{(u_0,u_1)}_{\mathcal{E}_0}   
$$
Now we will use continuation method to prove that the estimate $Y(t)\leq 1$ holds on
the interval $[0,t_{\max}]$. The argument follows the scheme of the proof from \cite[Proposition 1.21]{Tao}.   Defining the logical predicates
$H(t)=(Y(t)\leq 1)$ and  $C(t)= (Y(t)\leq \frac{1}{2} )$ we
observe that following facts hold
\begin{itemize}
 \item $C(0)$ is true.
 \item If $C(s_0)$ for some $s_0$ is true then $H(s)$ is true in some neighbourhood of  $s_0$.
 \item If $s_n\to s_0$ and $C(s_n)$ holds for every $n$ then $C(s_0)$ is true.
 \item $H(t)$ implies $C(t)$ for  $t\in [0,t_{\textrm{max}}]$, indeed 
 $$
 Y(t)\leq
 CR^\frac{1}{5}((C_1 \norm{u_0,u_1})^{4-\frac{4\kappa}{5}}+1+Y(t)^{4-\frac{4\kappa}{5}} )t^{\frac{\kappa}{5}}
 \leq
 C R^\frac{1}{5} (C_1 \norm{u_0,u_1}+2)^{\frac{4-\kappa}{5}} t^{\frac{\kappa}{5}}_{\max}
 \leq
 \frac{1}{2}.
 $$

\end{itemize}
The continuation argument implies that $C(t)$ holds for
$t\in [0,t_{\max}]$.
From the triangle inequality we conclude that
$$\norm{u}_{L^4(0,t_{\max};L^{12})}\leq C_1\norm{(u_0,u_1)} + 1.$$
Observe that $t_{\max}$ and $C_1$ are independent of choice of $p_\varepsilon\in \mathcal{H}_{[0,1]}$.
Because all trajectories are bounded. cf. Proposition \ref{prop:abball}, by picking $R :=\max_{t\in[0,\infty)} \norm{(u(t),u_t(t))}_{\mathcal{E}_0} $ we 
deduce that 
$\norm{u}_{L^4(0,t,L^{12})}$ 
is bounded for every $t>0$. Moreover if $(u(t),u_t(t)) \in B_0$ for every $t\geq  0$, then with $R := \sup_{(u,v)\in B_0} \norm{(u,v)}_{\mathcal{E}_0}$ we get the  bound  $\norm{u}_{L^4(0,t,L^{12})} \leq C_t$ with $C_t$ independent of $p_\varepsilon$.
\end{proof}
\begin{remark}
As a consequence of Proposition \ref{prop:uniq} and Lemma \ref{lem:savostianov} for every $(u_0,u_1)\in \mathcal{E}_0$  weak solution of Galerkin type of problem \eqref{eq:prblm}  is unique.
\end{remark}
\begin{lemma}\label{lem:cont}
	If the weak solution $(u,u_t)$ of Problem \ref{eq:prblm} is of Galerkin type then for every $T>0$ it belongs to the space $C([0,T];\mathcal{E}_0)$.
\end{lemma}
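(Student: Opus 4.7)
The plan is to recast the equation for $u$ as a linear inhomogeneous wave equation with right-hand side $G(t) := f_\varepsilon(t, u(t))$, and then invoke the standard regularity statement for the linear problem \eqref{eq:linear} recalled in the excerpt immediately before Proposition \ref{prop:linearestimate}: if $(u_0,u_1)\in \mathcal{E}_0$ and $G\in L^1(0,T;L^2)$, the unique weak solution belongs to $C([0,T];H^1_0)$ with derivative in $C([0,T];L^2)$. Thus everything reduces to verifying that the substituted nonlinearity lives in $L^1(0,T;L^2)$.

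To that end I first use Lemma \ref{lem:savostianov}, which promotes every Galerkin-type solution to a Shatah--Struwe solution, so $u\in L^4(0,T;L^{12})$. Combining this with $u\in L^\infty(0,T;H^1_0)\hookrightarrow L^\infty(0,T;L^6)$ (from Definition \ref{def:problem}) and the interpolation recorded in the Remark preceding Definition \ref{def:shatah} gives $u\in L^5(0,T;L^{10})$. Integrating the pointwise bound of \hyperref[as:H5]{(H5)} in the $u$-variable, together with the uniform bound on $f_\varepsilon(t,0)$ furnished by \hyperref[as:H3]{(H3)} and $f_0\in C^1$, yields $|f_\varepsilon(t,u)|\le C(1+|u|^{5-\kappa})$. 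Since $\Omega$ is bounded and $2(5-\kappa)\le 10$, this gives
$$
\|f_\varepsilon(t,u(t))\|\leq C\bigl(1+\|u(t)\|_{L^{10}}^{5-\kappa}\bigr),
$$
and H\"older's inequality in time produces
$$
\int_0^T \|G(s)\|\, ds \leq CT + C T^{\kappa/5}\|u\|_{L^5(0,T;L^{10})}^{5-\kappa} < \infty.
$$

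With $G\in L^1(0,T;L^2)$ established, I observe that the Galerkin-type weak solution $u$ satisfies the equation $u_{tt}+u_t-\Delta u = G$ in the sense of Definition \ref{def:problem} and has initial data in $\mathcal{E}_0$. By uniqueness of weak solutions to \eqref{eq:linear} with $L^1(L^2)$ forcing, $u$ coincides with the unique weak solution of this linear problem, whose stated regularity is exactly $(u,u_t)\in C([0,T];\mathcal{E}_0)$. I do not foresee a genuine obstacle: the only substantive step is the $L^1(0,T;L^2)$ bound on $G$, which is a direct consequence of the Shatah--Struwe regularity given by Lemma \ref{lem:savostianov} together with the subquintic growth assumption \hyperref[as:H5]{(H5)}.
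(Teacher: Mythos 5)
Your proof is correct, but it takes a different route from the paper's. You reduce the statement to linear theory: using Lemma \ref{lem:savostianov} to get the Shatah--Struwe regularity $u\in L^4(0,T;L^{12})$, hence $u\in L^5(0,T;L^{10})$ by interpolation, you show $G=f_\varepsilon(\cdot,u(\cdot))\in L^1(0,T;L^2)$ and then identify $u$, via uniqueness, with the solution of the linear problem \eqref{eq:linear}, whose $C([0,T];\mathcal{E}_0)$ regularity the paper records as classical. The paper instead argues directly on the nonlinear solution through the energy equality: it notes that $(u,u_t)\in C_w([0,T];\mathcal{E}_0)$, and upgrades weak to strong continuity by showing that $\norm{(u(t_n),u_t(t_n))}_{\mathcal{E}_0}\to\norm{(u(t),u_t(t))}_{\mathcal{E}_0}$, estimating the right-hand side of the energy identity by $\|u\|_{L^5(t_n,t;L^{10})}$. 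Both arguments hinge on the same quantitative ingredient --- the $L^5(L^{10})$ bound coming from the Strichartz estimate, which makes the nonlinearity $L^1$-in-time with values in $L^2$ --- but they package it differently. Your version is shorter and delegates the continuity to the well-posedness theory of the inhomogeneous linear wave equation (so it silently relies on uniqueness of weak solutions to \eqref{eq:linear} with $L^1(L^2)$ forcing, which the paper asserts without proof); the paper's version is self-contained modulo the validity of the energy equality for Galerkin/Shatah--Struwe solutions, a fact it needs elsewhere anyway (e.g.\ in the continuity argument of Proposition \ref{eq:propcont}). Either route is acceptable here.
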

\begin{proof}
	The proof follows an argument of Proposition 3.3 from \cite{Savostianov}. They key fact is that Galerkin (or equivalently, Shatah--Struwe) solutions satisfy the energy equation. Let $t_n\to t$ and let $T> \sup_{n\in N}\{ t_n \}$. Clearly, $(u,u_t) \in C_w([0,T];\mathcal{E}_0)$ and hence $(u(t_n),u_t(t_n))\to (u(t),u_t(t))$ weakly in $\mathcal{E}_0$. To deduce that this convergences is strong we need to show that $\|(u(t_n),u_t(t_n))\|_{\mathcal{E}_0} \to \|(u(t),u_t(t))\|_{\mathcal{E}_0}$. To this end we will use the energy equation
	$$
	\norm{(u(t),u_t(t))}_{\mathcal{E}_0}^2-
	\norm{(u(t_n),u_t(t_n))}_{\mathcal{E}_0}^2 
	=
	2\int_{t_n}^t (p(s,u(s)),u_t) -\norm{u_t(s)}^2\, ds.
	$$
Then
$$
\left|\norm{(u(t),u_t(t))}_{\mathcal{E}_0}^2-
	\norm{(u(t_n),u_t(t_n))}_{\mathcal{E}_0}^2\right| \leq C R\left((R+1)|t-t_n| + 
	\norm{u}_{L^5{(t_n,t;L^{10})}}\right)
$$	
where $R$ is a bound on $\|u_t\|$. The right side tends to zero as $t_n\to t$ which proves the assertion.
	\end{proof}

\subsection{Nonautonomous dynamical system.} We will denote by
$(u(t),u_t(t)) = \varphi_\varepsilon(t,p)(u_0,u_1)$ the map which gives  the solution of  \eqref{eq:prblm} with $p\in\mathcal{H}(f_\varepsilon)$ as the right-hand side  and the initial conditions
$u(0)=u_0$, $u(0)=u_1$.

\begin{proposition}\label{eq:propcont}
 Mapping $\varphi_\varepsilon:\mathbb{R}\times\mathcal{H}(f_\varepsilon)\to C(\mathcal{E})$ together with time translation $\theta_t p_\varepsilon = p_\varepsilon (\cdot + t)$ forms a NDS.
\end{proposition}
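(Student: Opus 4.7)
The plan is to verify the four axioms defining a non-autonomous dynamical system on the state space $\mathcal{E}_0$ with base $(\mathcal{H}(f_\varepsilon),\theta)$: \emph{(i)} the base is a continuous group action on a compact metric space; \emph{(ii)} $\varphi_\varepsilon(0,p)=\mathrm{Id}_{\mathcal{E}_0}$ for every $p$; \emph{(iii)} the cocycle identity $\varphi_\varepsilon(t+s,p)=\varphi_\varepsilon(t,\theta_s p)\circ\varphi_\varepsilon(s,p)$; and \emph{(iv)} joint continuity of $(t,p,(u_0,u_1))\mapsto\varphi_\varepsilon(t,p)(u_0,u_1)$. Items \emph{(i)} and \emph{(ii)} are immediate: the translation compactness of $f_\varepsilon$ already established makes $\mathcal{H}(f_\varepsilon)$ compact, continuity of $\theta$ follows at once from the definition of the hull metric, and \emph{(ii)} is direct from Definition \ref{def:problem}. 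For \emph{(iii)}, setting $u(\cdot)=\varphi_\varepsilon(\cdot,p)(u_0,u_1)$, the shifted map $\tau\mapsto u(\tau+s)$ is again a Shatah--Struwe solution of \eqref{eq:prblm} but driven by $\theta_s p$ with initial data $(u(s),u_t(s))$; uniqueness (Proposition \ref{prop:uniq} together with Lemma \ref{lem:savostianov}) then forces it to coincide with $\varphi_\varepsilon(\cdot,\theta_s p)(u(s),u_t(s))$.

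The substantive point is \emph{(iv)}. Given $t_n\to t_0$, $p_n\to p_0$ in $\mathcal{H}(f_\varepsilon)$ and $(u_0^n,u_1^n)\to(u_0^0,u_1^0)$ in $\mathcal{E}_0$, set $u^n(\tau)=\varphi_\varepsilon(\tau,p_n)(u_0^n,u_1^n)$. On any interval $[0,T]$ with $T>\sup_n t_n$, the uniform form over $\mathcal{H}_{[0,1]}$ of Proposition \ref{prop:abball} together with Lemma \ref{lem:savostianov} yields a bound on $u^n$ in $L^\infty(0,T;H^1_0)\cap L^4(0,T;L^{12})$ and on $u^n_t$ in $L^\infty(0,T;L^2)$, uniformly in $n$. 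Passing to a subsequence, Aubin--Lions gives $u^n\to\tilde u$ strongly in $C([0,T];L^2)$, together with weak-$*$ limits of the derivatives; a further subsequence yields pointwise a.e.\ convergence in $(0,T)\times\Omega$. Combining the hull convergence $p_n\to p_0$ (uniform on compact subsets of $\mathbb{R}\times\mathbb{R}$) with the growth estimate of Proposition \ref{prop:p} via a truncation argument, one identifies $p_n(\cdot,u^n)\to p_0(\cdot,\tilde u)$ in $L^1(0,T;L^{6/5})\hookrightarrow L^1(0,T;H^{-1})$; passage to the limit in the weak formulation of \eqref{eq:prblm} then gives that $\tilde u$ is the unique Shatah--Struwe solution with driver $p_0$ and initial data $(u_0^0,u_1^0)$, so $\tilde u=\varphi_\varepsilon(\cdot,p_0)(u_0^0,u_1^0)$ and the whole sequence converges. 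The upgrade from weak $\mathcal{E}_0$ convergence at time $t_0$ (and then at $t_n$ via Lemma \ref{lem:cont}) to strong convergence follows from the energy equation, each of whose terms passes continuously to the limit under the above convergences.

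The main technical obstacle is the identification of the nonlinear forcing in the limit. Since $u^n$ is controlled in $H^1_0\hookrightarrow L^6$ and in $L^4(0,T;L^{12})$ only, the growth $|\partial_u p_n|\leq C(1+|u|^{4-\kappa})$ is not directly tamed by the energy norm. The plan is a splitting into $\{|u^n|\leq R\}$ and its complement: on the first set the uniform convergence of $p_n$ on $[-R,R]$ encoded in the hull metric produces a quantity $\eta_n(R,T)\to 0$; on the second the bound $|p_n-p_0|^{6/5}\leq C|u^n|^{6-6\kappa/5}$ combined with a Chebyshev-type estimate yields
$$\int_{\{|u^n|>R\}}|u^n|^{6-6\kappa/5}\,dx\leq R^{-6\kappa/5}\|u^n\|_{L^6}^6,$$
which vanishes as $R\to\infty$ thanks to the assumption $\kappa>0$ and the uniform $L^6$ bound. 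Sending $n\to\infty$ first and $R\to\infty$ afterwards closes the estimate; the further difference $p_0(\cdot,u^n)-p_0(\cdot,\tilde u)$ is handled analogously, exploiting the strong $L^2$ convergence $u^n\to\tilde u$ and the local Lipschitz bound on $p_0$.
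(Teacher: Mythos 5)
Your overall skeleton matches the paper's: uniform bounds from Proposition \ref{prop:abball} and Lemma \ref{lem:savostianov}, Aubin--Lions compactness, identification of the limit as the unique Shatah--Struwe solution, then an upgrade from weak to strong $\mathcal{E}_0$ convergence, finished off with Lemma \ref{lem:cont} at the shifted times $t_n$. Your treatment of the nonlinear forcing in the limit (truncation on $\{|u^n|\le R\}$ plus a Chebyshev tail estimate against the uniform $L^6$ bound) is a legitimate alternative to the paper's route (pointwise a.e.\ convergence plus dominated convergence with an $L^{6-\frac{6}{5}\kappa}$ majorant obtained from the compact embedding); both work and neither is clearly preferable.

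The genuine gap is the final step. You assert that strong convergence $(u^n(t),u^n_t(t))\to(\tilde u(t),\tilde u_t(t))$ in $\mathcal{E}_0$ "follows from the energy equation, each of whose terms passes continuously to the limit under the above convergences." This is false as stated: the energy identity contains the dissipative term $\int_0^t\|u^n_t(s)\|^2\,ds$, which is only weakly lower semicontinuous under the weak-$*$ convergence of $u^n_t$ in $L^\infty(0,T;L^2)$; its convergence is equivalent to strong $L^2(0,T;L^2)$ convergence of $u^n_t$, i.e.\ essentially what you are trying to prove. To salvage the energy method you must pass to the perturbed functional (test with $u_t+\delta u$) so that this term is absorbed into an exponential decay, and you must then show that $p_n(\cdot,u^n)\to p_0(\cdot,\tilde u)$ \emph{strongly in $L^2(0,T;L^2)$} (not merely in $L^1(0,T;L^{6/5})$ as in your identification step) in order to pass to the limit in $\int_0^t e^{-\alpha(t-s)}(p_n(s,u^n),u^n_t+\delta u^n)\,ds$ against the weakly convergent $u^n_t$. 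That strong $L^2(L^2)$ convergence is exactly where the Strichartz bound enters: one interpolates $\int_0^T\int_\Omega|u^n-\tilde u|^2(1+|u^n|^{8-2\kappa}+|\tilde u|^{8-2\kappa})$ between the Aubin--Lions convergence in $L^{\frac{12}{2+\kappa}}$ and the uniform $L^4(0,T;L^{12})$ bound; the energy norm alone (giving only $L^\infty(L^6)$) cannot control the square of a nonlinearity of growth $5-\kappa$. The paper performs precisely this interpolation, but uses it in a direct Gr\"onwall estimate for the difference $w^n=u^n-\tilde u$ tested with $w^n_t$, which avoids the energy-equation machinery altogether. Either way, this estimate is the crux of the continuity proof and cannot be waved through.
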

\begin{proof}
  Property $\varphi(0,p) = \textrm{Id}_{\mathcal{E}_0}$ and cocycle property are obvious from definition of $\varphi_\varepsilon$ and $\theta_t$. Let $(u^n_0,u^n_1)\to (u_0,u_1)$ in $\mathcal{E}_0$, $p_\varepsilon^n\to p_\varepsilon$ in the metric of $\Sigma$, $t_n\to t$ and let $\{u^n\}_{n=1}^\infty$ and $u$ be the Galerkin type weak solutions of the problems governed by the equations
	\begin{align}
	& u^n_{tt} + u^n_t -\Delta u^n = p^n_\varepsilon(t,u^n),\label{eq:u}\\
	& u_{tt} + u_t -\Delta u = p_\varepsilon(t,u),\label{eq:u2}
	\end{align}
	with the boundary data $u^n=u =0$ on $\partial \Omega$ and initial data $(u^n(0),u^n_t(0)) = (u^n_0,u^n_1)\in \mathcal{E}_0$ and $(u(0),u_t(0)) = (u_0,u_1)\in \mathcal{E}_0$. Choose $T>0$ such that $T> \sup_{n\in \mathbb{N}}\{t_n \}$. There hold the bounds
	\begin{align*}
	& \|\nabla u^n(t)\|_{L^2} \leq C, \ \ \|\nabla u(t)\|_{L^2} \leq C,\\
	& \|u^n_t(t)\|_{L^2} \leq C,  \ \ \|u_t(t)\|_{L^2} \leq C,\\
	& \|u^n_{tt}(t)\|_{H^{-1}} \leq C,  \ \ \|u_{tt}(t)\|_{H^{-1}} \leq C.
	\end{align*}
	for $t\in [0, T]$ with a constant $C>0$. 
	Moreover there hold the bounds
	$$
	\|u^n\|_{L^4(0,T;L^{12})} \leq C, \|u\|_{L^4(0,T;L^{12})} \leq C.
	$$
	This means that, for a subsequence
	\begin{align*}
	& u^n \to v \ \ \textrm{weakly-*}\ \ \textrm{in}\ \ L^\infty(0,T;H^1_0),\\
	& u^n_t \to v_t  \ \ \textrm{weakly-*}\ \ \textrm{in}\ \ L^\infty(0,T;L^2),\\
	& u^n_{tt} \to v_{tt} \ \ \textrm{weakly-*}\ \ \textrm{in}\ \ L^\infty(0,T;H^{-1}),	  
	\end{align*} 
	for a certain function $v \in L^\infty(0,T;H^1_0)$ with $v_t \in L^\infty(0,T;L^2)$ and $v_{tt} \in L^\infty(0,T;H^{-1})$. 
	By Lemma \ref{lem:cont} 
	$u^n, u\in C([0,T];\mathcal{E}_0)$.
	Moreover $v\in C([0,T];L^2) \cap  C_w([0,T];H^1_0)$, and, $v_t\in C([0,T];H^{-1})\cap C_w([0,T];L^2)$, cf \cite[Lemma 1.4, page 263]{Temam}. We will show that $v=u$ for $t\in [0,T]$.
	Note that for every $w\in L^2$ 
	$$
	(u^n(0),w) = (u^n(t),w) - \int_{0}^{t}(u^n_t(s),w)\, ds.
	$$
	Integrating with respect to $t$ between $0$ and $T$ and exchanging the order of integration we obtain 
	$$
	T(u^n_0,w) = \int_{0}^{T}(u^n(t),w)\, dt - \int_{0}^{T}(u^n_t(s),(T-s)w)\, ds.
	$$
	Passing to the limit we obtain 
	$$
	T(u_0,w) = \int_{0}^{T}(v(t),w)\, dt - \int_{0}^{T}(v_t(s),(T-s)w)\, ds = T(v(0),w),
	$$
	whence $v(0) = u_0$. It is straightforward to see that $u^n(t)\to v(t)$ weakly in $H^1_0$ for every $t\in [0,T]$. Similar reasoning for $u^n_t$ allows us to deduce that $v_t(0) = u_1$ and $u^n_t(t) \to v(t)$ weakly in $L^2$ for every $t\in [0,T]$.  
	Now we have to show that $v$ satsfies  \eqref{eq:u2}. Indeed, weak form  of \eqref{eq:u} is as follows 
	\begin{align*}
	& \int_{0}^{T}\langle u^{n}_{tt}(t), w(t)\rangle_{H^{-1}\times H^1_0{}}\, dt  + \int_{0}^{T}( u^{n}_{t}(t), w(t)) \, dt + \int_{0}^{T}(\nabla  u^{n}(t), \nabla w(t)) \, dt\\
	& \ \  = \int_{0}^{T}\int_{\Omega } f^n_{\epsilon}(u^n(x,t),t) w(t) \, dx \, dt,
	\end{align*}
	for every $w\in L^2(0,T;H^1_0)$. It suffices only to pass to the limit on the right-hand side. Fix $t\in [0,T]$ and $w\in H^1_0$. There holds $u^n(\cdot,t)\to u(\cdot,t)$ strongly in $L^{6-\frac{6}{5}\kappa}$ and, for a subsequence, $u^n(x,t)\to u(x,t)$ for a.e. $x\in\Omega$ and $|u^n(x,t)|\leq g(x)$ with $g\in L^{6-\frac{6}{5}\kappa}$ where $g$ can also depend on $t$. Hence 
	$$
	f^n_\epsilon(u^n(x,t),t)w(x) \to f_\epsilon(u(x,t),t)w(x)\ \ \textrm{a.e.}\ \ x\in \Omega,
	$$
	moreover 
	$$
	|f^n_\epsilon(u^n(x,t),t)w(x)| \leq C(1+|u^n(x,t)|^{5-\kappa}) |w(x)| \leq |w(x)|^6 + C(1+g(x)^{6-\frac{6}{5}\kappa})\in L^1.
	$$
	This means that
	$$
	\lim_{n\to \infty} \int_{\Omega } f^n_{\epsilon}(u^n(x,t),t) w(x) \, dx = \int_{\Omega } f_{\epsilon}(u(x,t),t) w(x) \, dx.
	$$ 
	Now let $w\in L^2(0,T;H^1_0)$. There holds 
	\begin{align*}	 
	&\left| \int_{\Omega } f^n_{\epsilon}(u^n(x,t),t) w(x,t) \, dx  \right| \leq  C  \int_{\Omega }(1+|u^n(x,t)|^{5}) |w(x,t)| \, dx  \leq C\|w(t)\|_{L^6}(1+\|u^n(t)\|_{L^6}^5)\\
	& \ \ \leq C\|w(t)\|_{H^1_0}(1+\|u^n(t)\|_{H^1_0}^5) \leq C\|w(t)\|_{H^1_0} \in L^1(0,T+1),
	\end{align*}
	whence we can pass to the limit in the nonlinear term. The fact that the $L^4(0,T;L^{12})$ estimate on $u^n$ is independent of $n$ implies that $v$ satisfies the same estimate which ends the proof that $u=v$. 
	
	We must show that $\norm {(u^n(t_n),u^n_t(t_n))-(u(t),u_t(t))}_{\mathcal{E}_0}\to 0$ 
	We already know that $u^n(t) \to u(t)$ weakly in $H^1_0$ and $u^n_t(t) \to u_t(t)$ weakly in $L^2$ for every $t\in [0,T]$. We will first prove that these convergences are strong. To this end let $w^n = u^n-u$. There holds
	$$
	w^n_{tt} + w^n_t - \Delta w^n = f^n_{\epsilon}(u^n,t) - f_\epsilon(u,t).
	$$ 
	Testing this equation with $w^n_t$ we obtain
	$$
	\frac{1}{2}\frac{d}{dt} \norm{(w^n(t),w^n_t(t))}_{\mathcal{E}_0}^2 
	+ \norm{w^n(t)}^2= \int_{\Omega}(f^n_{\epsilon}(t,u^n) - f_\epsilon(t,u))w^n_t(t)\, dx.  
	$$
	Simple computations lead us to 
	$$
	\frac{d}{dt}\norm{(w^n(t),w^n_t(t))}_{\mathcal{E}_0}^2 \leq \int_{\Omega}(f^n_{\epsilon}(t,u^n) - f^n_\epsilon(u,t))^2\, dx + \int_{\Omega}(f^n_{\epsilon}(t,u) - f_\epsilon(t,u))^2\, dx.  
	$$
	After integration from $0$ to $t$ we obtain 
	\begin{align*}
	&\norm{(w^n(t),w^n_t(t))}_{\mathcal{E}_0}^2 \leq 
	\norm{(u^n_0-u^n,u^n_1-u_1)}_{\mathcal{E}_0}^2 \\
	& \qquad + \int_{0}^T\int_{\Omega}(f^n_{\epsilon}(u^n,s) - f^n_\epsilon(u,s))^2\, dx\, ds + \int_{0}^T\int_{\Omega}(f^n_{\epsilon}(u,s) - f_\epsilon(u,s))^2\, dx\, ds.  
	\end{align*}
	We must pass to the limit in two terms. To deal with the first term observe that,
	\begin{align}
	& \int_{0}^T\int_{\Omega}(f^n_{\epsilon}(u^n,s) - f^n_\epsilon(u,s))^2\, dx\, ds \leq 
	\int_{0}^T\int_{\Omega}(C|u^n(s)-u(s)|(1+|u^n(s)|^{4-\kappa}+|u(s)|^{4-\kappa}))^2\, dx\, ds\nonumber\\
	& \ \ \leq C  \int_{0}^T\int_{\Omega}|u^n(s)-u(s)|^2(1+|u^n(s)|^{8-2\kappa}+|u(s)|^{8-2\kappa})\, dx\, ds \nonumber\\
	& \ \ \leq C \left(\|u^n-u\|^2_{L^2(0,T;L^2)} + \int_{0}^T\, \|u_n(s)-u(s)\|_{L^{\frac{12}{2+\kappa}}}^{2}\left(\|u^n(s)\|_{L^{12}}^{\frac{12}{8-2\kappa}}+\|u(s)\|_{L^{12}}^{\frac{12}{8-2\kappa}}\right)ds\right)\nonumber\\
	& \ \ \leq C \left(\|u^n-u\|^2_{L^2(0,T;L^2)} + \left(\int_{0}^T\, \|u_n(s)-u(s)\|_{L^{\frac{12}{2+\kappa}}}^{\frac{8-2\kappa}{1-\kappa}}\, dt\right)^{\frac{1-\kappa}{4-\kappa}} \left(\|u^n\|_{L^4(0,T;L^{12})}^{\frac{12}{4-\kappa}}+\|u\|_{L^4(0,T;L^{12})}^{\frac{12}{4-\kappa}}\right)\right)\nonumber\\
	& \ \ \leq C \left(\|u^n-u\|^2_{L^2(0,T;L^2)} + \|u^n-u\|_{L^{\frac{8-2\kappa}{1-\kappa}}(0,T;L^{\frac{12}{2+\kappa}})}^{\frac{8-2\kappa}{4-\kappa}} \right),\label{eq:conv_a}
	\end{align}
	and the assertion follows from the compact embedding $H^1_0\subset L^{\frac{12}{2+\kappa}}$ by the Aubin--Lions lemma. To deal with the second term note that $f^n_\epsilon(u,t)\to f_\epsilon(u,t)$ for almost every $(x,t)\in \Omega\times (0,T)$. Moreover 
	
	$$
	(f^n_{\epsilon}(u,t) - f_\epsilon(u,t))^2 \leq C,
	$$
	and the Lebesgue dominated convergence theorem implies the assertion. 
	
	Now, the triangle inequality implies  
	\begin{align*}
	& \|\nabla u^n(t_n)-\nabla u(t)\|_{L^2}^2 + \|u^n_t(t_n)- u_t(t)\|_{L^2}^2 \\
	& \ \ \leq 2\left(	\|\nabla u^n(t_n)-\nabla u(t_n)\|_{L^2}^2 + \|u^n_t(t_n)- u_t(t_n)\|_{L^2}^2\right) \\
	& \qquad + 	2\left(\|\nabla u(t_n)-\nabla u(t)\|_{L^2}^2 + \|u_t(t_n)- u_t(t)\|_{L^2}^2\right),
	\end{align*}
	where both terms tend to zero, the first one by \eqref{eq:conv_a} and the second one by Lemma \ref{lem:cont} and the proof is complete. 
\end{proof}
\section{Existence and regularity of non-autonomous attractors.}\label{sec:6}

We start from the result which states that the solution can be split into the sum of two functions: one that decays to zero, and another one which is more smooth than the initial data.
\begin{lemma}\label{le:spliting}
Let $u$ be the Shatah--Struve solution of \eqref{eq:prblm} such that $u(t) \in B_0$ for every $t\geq 0$, where $B_0$ is the absorbing set from Proposition \ref{prop:abball}.
There exists the increasing sequence $\alpha_0,\ldots,\alpha_k$ with $\alpha_0=0$, $\alpha_k=1$ such that
if $\norm {(u(t),u_t(t))}_{\mathcal{E}_{\alpha_i}}\leq R$ for every $t\in[0,\infty)$, then $u$ can be represented as the sum of two functions $v,w$ satisfying
\begin{align*}
& u(t)=v(t)+w(t), \ \ \norm{(v(t),v_t(t))}_{\mathcal{E}_{\alpha_{i}}}\leq  \norm{(u_0,u_1)}_{\mathcal{E}_{\alpha_i}} C e^{-t\alpha} \\
& \qquad \textrm{and}\ \  \norm{(w(t),w_t(t))}_{\mathcal{E}_{\alpha_{i+1}}} \leq C_R \ \ \textrm{for}\ \  i\in \{0,\ldots,k-1\}.
\end{align*}

Moreover constants $C,C_R ,\alpha$ are the same for every $p(t,u)\in \mathcal{H}_{[0,1]}$ treated as the right-hand side  in equation \eqref{eq:prblm}.
\end{lemma}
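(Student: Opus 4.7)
The plan is to follow the classical splitting argument. For each $i\in\{0,\ldots,k-1\}$, decompose $u=v+w$, where $v$ solves the linear homogeneous problem
$$
v_{tt}+v_t-\Delta v = 0, \qquad (v(0),v_t(0))=(u_0,u_1),
$$
and $w$ solves the inhomogeneous problem with zero initial data
$$
w_{tt}+w_t-\Delta w = p(t,u(t)), \qquad (w(0),w_t(0))=(0,0).
$$
The bound on $v$ follows from Proposition \ref{prop:linearestimate} when $\alpha_i=0$ and from the homogeneous part of Lemma \ref{le:fractionlstrichartz} for $\alpha_i\in(0,1]$, yielding the desired exponential decay in $\mathcal{E}_{\alpha_i}$ with constants $C,\alpha$ depending only on the linear semigroup and, in particular, independent of $p\in\mathcal{H}_{[0,1]}$.

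For $w$, the inhomogeneous part of Lemma \ref{le:fractionlstrichartz} applied with $s=\alpha_{i+1}$ gives
$$
\norm{(w(t),w_t(t))}_{\mathcal{E}_{\alpha_{i+1}}} \leq C\int_0^t e^{-\alpha(t-s)}\,\norm{p(s,u(s))}_{\mathbb{H}^{\alpha_{i+1}}}\,ds,
$$
so the exponential kernel in time reduces the claim to a uniform pointwise estimate of the form $\norm{p(s,u(s))}_{\mathbb{H}^{\alpha_{i+1}}}\leq C_R$ whenever $\norm{(u(s),u_t(s))}_{\mathcal{E}_{\alpha_i}}\leq R$. This is a mapping property for the superposition operator $u\mapsto p(\cdot,u)$ between fractional Sobolev spaces on $\Omega$. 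Using \hyperref[as:H5]{(H5)} together with the uniform growth bound from Proposition \ref{prop:p}, a Moser/fractional chain rule of the type
$$
\norm{p(\cdot,u)}_{\mathbb{H}^{\alpha_{i+1}}} \leq C\bigl(1+\norm{u}_{L^{q}}^{4-\kappa}\bigr)\norm{u}_{\mathbb{H}^{1+\alpha_{i+1}}}
$$
is expected to apply, where $q$ is determined by the Sobolev embedding of $\mathbb{H}^{1+\alpha_i}$; provided the gain $\alpha_{i+1}-\alpha_i$ is chosen sufficiently small with respect to $\kappa$, the right-hand side is controlled by $R$, closing the inductive step.

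The sequence $0=\alpha_0<\alpha_1<\cdots<\alpha_k=1$ is then chosen so that at each step the gain is admissible in the sense above; since $\kappa>0$, a strictly positive admissible gain exists at every stage, so finitely many iterations bring us to $\alpha_k=1$. Uniformity of all constants with respect to $p\in\mathcal{H}_{[0,1]}$ is inherited from Proposition \ref{prop:p} (which provides constants in the growth bounds of $p$ and $\partial_u p$ that are independent of $p$) and from Proposition \ref{prop:abball} together with Lemma \ref{lem:savostianov} (which yield $p$-uniform bounds on the energy and Strichartz norms on the absorbing set $B_0$). I expect the main obstacle to be the rigorous justification of the fractional chain rule for $p(t,\cdot)$ in $\mathbb{H}^s(\Omega)$ on a bounded domain with Dirichlet boundary conditions, and the careful tracking of Sobolev exponents so that the gain $\alpha_{i+1}-\alpha_i$ remains strictly positive throughout the bootstrap, even as the embeddings approach criticality.
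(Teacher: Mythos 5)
Your decomposition $u=v+w$ and the treatment of $v$ coincide with the paper's, but the way you close the bound on $w$ contains a genuine gap. You reduce the claim to a \emph{pointwise-in-time} superposition estimate $\norm{p(s,u(s))}_{\mathbb{H}^{\alpha_{i+1}}}\leq C_R$ under the sole hypothesis $\norm{(u(s),u_t(s))}_{\mathcal{E}_{\alpha_i}}\leq R$. No such estimate holds in this problem. Already at the first step ($\alpha_i=0$): from $u(s)\in H^1_0$ one only gets $u(s)\in L^6$, so $\partial_u p(s,u(s))\,\nabla u(s)$ is a product of a function in $L^{6/(4-\kappa)}$ with one in $L^2$, and for small $\kappa$ the resulting exponent is below $1$; the composition $p(s,u(s))$ need not lie in any $\mathbb{H}^{\alpha}$ with $\alpha>0$ (it is not even in $L^2$ in general, only in $L^{6/(5-\kappa)}$). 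The extra integrability that makes the nonlinearity smoother is supplied exclusively by the Strichartz norm $\norm{u}_{L^4(t_0,t_0+h;W^{\alpha_i,12})}$, which controls $\norm{u(s)}_{L^{12}}$ only in a time-averaged sense. Consequently the correct quantity to bound is $\int_{t_0}^{t_0+h}\norm{p(s,u(s))}_{\mathbb{H}^{\alpha_{i+1}}}\,ds\leq C(h,R)$ (this is what the paper's Gagliardo--Nirenberg computation \eqref{in:fracnorm} delivers, with terms like $\norm{u}_{L^4(t_0,t_0+h;L^{12})}^4$ on the right), and the conclusion for $w$ is then obtained not from your convolution-with-pointwise-bound argument but from the window estimate $\norm{(w(t+h),w_t(t+h))}_{\mathcal{E}_{\alpha_{i+1}}}\leq Ce^{-\beta h}\norm{(w(t),w_t(t))}_{\mathcal{E}_{\alpha_{i+1}}}+C(h,R)$, iterated with $h$ fixed so that $Ce^{-\beta h}\leq \tfrac12$.

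Two further points. First, your displayed ``Moser/fractional chain rule'' has $\norm{u}_{\mathbb{H}^{1+\alpha_{i+1}}}$ on the right-hand side, which is circular: at stage $i$ you only know $u\in\mathbb{H}^{1+\alpha_i}$, and even after replacing it by $\norm{u}_{\mathbb{H}^{1+\alpha_i}}$ the prefactor $\norm{u}_{L^q}^{4-\kappa}$ requires an exponent $q$ beyond what $\mathcal{E}_{\alpha_i}$ gives pointwise in time, for the reason above. Second, the induction must carry \emph{two} hypotheses forward --- the $L^\infty_t(\mathcal{E}_{\alpha_i})$ bound \emph{and} the $L^1_t(\mathbb{H}^{\alpha_i})$ bound on the forcing (equivalently, via Lemma \ref{le:fractionlstrichartz}, the $L^4_t(W^{\alpha_i,12})$ bound on $u$) --- since the latter is what produces the gain $\alpha_{i+1}=\tfrac52\alpha_i$ at the next step; the gain is not a free ``sufficiently small'' parameter but is dictated by the interpolation exponents, with a separate final step from $\alpha_{k-1}=\tfrac{9}{20}$ to $1$. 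The uniformity in $p\in\mathcal{H}_{[0,1]}$ that you invoke via Propositions \ref{prop:p}, \ref{prop:abball} and Lemma \ref{lem:savostianov} is correct and matches the paper.
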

\begin{proof}

From the Gagliardo-–Nirenberg interpolation inequality we see that
$$\norm{p_\varepsilon(t,u)}_{H^{\alpha}}\leq
C \norm{\nabla p_\varepsilon(t,u)}_{L^s}^\theta \norm{p_\varepsilon(t,u)}_{L^q}^{1-\theta}+C\norm{p_\varepsilon(t,u)}_{L^q}$$
with $\alpha\leq\theta\leq1$,
$\frac{1}{2}=\frac{\alpha}{3}+\left(\frac{1}{s}-\frac{1}{3}\right)\theta+\frac{1-\theta}{q}$ and $s<2$.
From the H{\"o}lder inequality we obtain
$$
\norm{p_\varepsilon(t,u)}_{H^\alpha}\leq
C
\left (\int_{\Omega}\left|\partu{p_\varepsilon}(t,u)\right|^{sp_*}dx \right)^\frac{\theta}{sp_*}
\left(\int_{\Omega}|\nabla u|^{sp}dx\right)^\frac{\theta}{sp}
\norm{p_\varepsilon(t,u)}_{L^q}^{1-\theta}+C\norm{p_\varepsilon(t,u)}_{L^q}.
$$
From assumption \hyperref[as:H5]{(H5)}, the Cauchy inequality, and the fact that solution $u$ is included in the absorbing set, taking $sp = 2,\;sp^{*}=3,\;\theta=\frac{1}{2}$ we get the inequality
\begin{align}
&\norm{p_\varepsilon(t,u)}_{H^\alpha}\leq
C(R) \left(
\left(\int_{\Omega}|u|^{12}dx\right)
^\frac{1}{3}
+
\left(\int_{\Omega}|u|^{(5-\kappa)q}dx\right)^{\frac{1}{q}}
+1
\right)\;\nonumber\\
&\qquad \qquad \qquad \qquad \qquad
\text{with}\; \alpha = \frac{3}{2}\left(\frac{1}{2}-\frac{1}{q}\right),\; \alpha<\frac{1}{2}
\label{in:fracnorm}
\end{align}
Now we will inductively describe sequence $\alpha_1\ldots,\alpha_{k-1}$ starting with $\alpha_1$.
If we set $ \frac{5-\kappa}{10}\leq \frac{1}{q} < \frac{1}{2}$ in inequality \eqref{in:fracnorm}, we obtain
$$
\int_{t_0}^{t_0+h}
\norm{p_\varepsilon(t,u)}_{H^{\alpha_1}}dt\leq
C(R)\left(\norm{u}_{L^4(t_0,t_0+h;L^{12})}^4
+\norm{u}_{L^5(t_0,t_0+h;L^{10})}^5
+h\right)
\leq
C(h,R).
$$
We  observe that $\alpha_1\in (0,\delta )$, for some $\delta>0$.
Assume that $i\in\{1,\dots,k-1\}$
$$
\norm{(u(t),u_t(t))}_{\mathcal{E}_{\alpha_i}}\leq R \quad \text{and}\quad
\int_{t_0}^{t_0+h} \norm{p_\varepsilon(t,u)}_{H^{\alpha_i}}dt\leq C(h,R)\ \quad\text{for $t,t_0\in[0,\infty)$} .
$$
From Lemma \ref{le:fractionlstrichartz}  we see that
$$
 u\in L^4(t_0,t_0+h;W^{\alpha_i,12}),
 \qquad
\norm{u}_{L^4(t_0,t_0+h;W^{\alpha_i,12})}\leq C(h,R) .
$$
By the Sobolev embeding $W^{\alpha_i,10}\hookrightarrow L^{\frac{30}{3-10\alpha_i}} $ and by  interpolation we see that
$$
\norm{u}_{L^5\left(t_0,t_0+h;L^{\frac{30}{3-10\alpha_i}} \right)}
\leq
\norm{u}_{L^5(t_0,t_0+h;W^{\alpha_i,10})}
\leq
\norm{u}_{L^4(t_0,t_0+h;W^{\alpha_i,12})}^\frac{4}{5}\norm{u}_{L^\infty(t_0,t_0+h;H^{\alpha_i +1})}^\frac{1}{5} \leq C(h,R).
$$
Using \eqref{in:fracnorm} with $q=\frac{6}{3-10\alpha_i}$ we  obtain 
$$
\int_{t_0}^{t_0+h}
\norm{p_\varepsilon(t,u)}_{H^{\alpha_{i+1}}}dt\leq
C(R)\left(\norm{u}_{L^4(t_0,t_0+h;L^{12})}^4
+\norm{u}_{L^5\left(t_0,t_0+h;L^{\frac{30}{3-10\alpha_i}}\right)}^5
+h\right)
\leq
C(h,R),
$$
with $\alpha_{i+1}=\frac{5}{2}\alpha_i$. From this recurrent relation and the fact that $\alpha_1\in(0,\delta)$ we can find sequence $\alpha_1,\ldots,\alpha_{k-1}$ such that $\alpha_{k-1} = \frac{9}{20}$. Let observe that in case $\alpha_{k-1} = \frac{9}{20}$ from the Sobolev embedding we get the bounds
$$
\norm{u}_{L^{60}}\leq C(R) \quad\text{and}\quad \norm{\nabla u}_{L^{\frac{60}{21}}}\leq C(R).
$$
Hence,
$$
\norm{\nabla p_\varepsilon(t,u)}_{L^2} \leq C\left(1+ \int_\Omega |u|^8 |\nabla u|^2\, dx\right)
\leq
C\left(1+\norm{u}_{L^{28}}^8 \norm{\nabla u}_{L^{\frac{60}{21}}}^2\right)\leq C(R),
$$
and consequently there holds $$\int_{t_0}^{t_0+h}\norm{\nabla p_\varepsilon(t,u)}_{L^2}\,  dt \leq C(h,R).$$
\\Let us decompose $u(t)=w(t)+v(t)$ where $w,v$ satisfy the problems
$$
\begin{cases}
v_{tt}+v_t-\Delta v = 0,
\\v(t,x) = 0\; \text{for}\; x\in \partial \Omega,
\\v(0,x) = u_0(x),
\\v_t(0,x) = u_1(x),
\end{cases}
\qquad\qquad
\begin{cases}
w_{tt}+w_t-\Delta w = p_\varepsilon(t,v+w),
\\w(t,x) = 0\; \text{for}\; x\in \partial \Omega,
\\w(0,x) = 0,
\\w_t(0,x) = 0.
\end{cases}
$$
From Lemma  \ref{le:fractionlstrichartz}  we get that 
$\norm{(v(t),v_t(t))}_{\mathcal{E}_{\alpha_i}} \leq C \norm{(u_0,u_1)}_{\mathcal{E}_{\alpha_i}} e^{- \alpha t}$ 
and
$$\norm{(w(t+h),w_t(t+h))}_{\mathcal{E}_{\alpha_{i+1}}}\leq
C e^{-\beta h}\norm{(w(t),w_t(t))}_{\mathcal{E}_{\alpha_{i+1}}}+ C(h,R),
$$
for every $t\geq 0$ and $h>0$. We set $h$ such that $ Ce^{-\beta h}\leq \frac{1}{2}$. Then we obtain that $\norm{(w(t),w_t(t))}_{\mathcal{E}_{\alpha_{i+1}}}\leq 2C(h,R) = C_R$ for $i\in\{0,\ldots,k-1 \}$.
We stress that all constants are independent of $p_\varepsilon(t,u)\in\mathcal{H}_{[0,1]}$.
\end{proof}
Bounds obtained in the previous lemma allow us to deduce the asymptotic compactness of the considered non-autonomous dynamical system.

\begin{proposition}
For every $\varepsilon\in[0,1]$, the non-autonomous dynamical system $(\varphi_\varepsilon,\theta)$ is uniformly asymptotically compact.
\end{proposition}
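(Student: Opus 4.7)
The plan is to deduce uniform asymptotic compactness by a single application of Lemma~\ref{le:spliting} with index $i=0$: this yields a splitting of each trajectory into a vanishing part and a part bounded in $\mathcal{E}_{\alpha_1}$, and the compact embedding $\mathcal{E}_{\alpha_1}\hookrightarrow \mathcal{E}_0$ (valid because $\alpha_1>0$) then finishes the argument.

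Concretely, I would fix a bounded set $B\subset\mathcal{E}_0$ and sequences $p_n\in\mathcal{H}(f_\varepsilon)$, $\xi_n\in B$, $t_n\to\infty$, and show that $\{\varphi_\varepsilon(t_n,p_n)\xi_n\}$ admits a subsequence convergent in $\mathcal{E}_0$. By Proposition~\ref{prop:abball}, the absorbing time $t_B$ depends only on $B$ and not on the driving symbol, so setting $\eta_n:=\varphi_\varepsilon(t_B,p_n)\xi_n\in B_0$, $q_n:=\theta_{t_B}p_n\in\mathcal{H}_{[0,1]}$, and $s_n:=t_n-t_B\to\infty$, the cocycle property yields $\varphi_\varepsilon(t_n,p_n)\xi_n=\varphi_\varepsilon(s_n,q_n)\eta_n$. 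The uniform-bound part of Lemma~\ref{le:gron}, used in the proof of Proposition~\ref{prop:abball}, then furnishes a constant $R>0$, depending only on $B_0$, such that the trajectory $u_n(t):=\varphi_\varepsilon(t,q_n)\eta_n$ satisfies $\norm{(u_n(t),(u_n)_t(t))}_{\mathcal{E}_0}\leq R$ for every $t\geq 0$ and every $n$, uniformly in $q_n\in\mathcal{H}_{[0,1]}$.

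With this uniform $\mathcal{E}_0$-bound in hand, Lemma~\ref{le:spliting} applied at $i=0$ decomposes $u_n=v_n+w_n$ with
$$
\norm{(v_n(s_n),(v_n)_t(s_n))}_{\mathcal{E}_0}\leq C\norm{\eta_n}_{\mathcal{E}_0}e^{-\alpha s_n}
\quad\text{and}\quad
\norm{(w_n(s_n),(w_n)_t(s_n))}_{\mathcal{E}_{\alpha_1}}\leq C_R,
$$
with $C,C_R,\alpha$ independent of $n$ and of $q_n\in\mathcal{H}_{[0,1]}$. The first term tends to zero in $\mathcal{E}_0$ because $s_n\to\infty$ and $\eta_n$ is bounded; the sequence formed by the second term is precompact in $\mathcal{E}_0$ thanks to the compact embeddings $\mathbb{H}^{\alpha_1+1}\hookrightarrow H^1_0$ and $\mathbb{H}^{\alpha_1}\hookrightarrow L^2$, both of which hold since $\alpha_1>0$. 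Extracting a convergent subsequence from $(w_n(s_n),(w_n)_t(s_n))$ and combining with the vanishing of the $v_n$ contribution produces a subsequence of $\varphi_\varepsilon(t_n,p_n)\xi_n$ convergent in $\mathcal{E}_0$, which is precisely uniform asymptotic compactness.

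The main concern is essentially bookkeeping rather than analytic: one must track that every constant arising in the cocycle-based reduction and in Lemma~\ref{le:spliting} is genuinely independent of the driving symbol from $\mathcal{H}(f_\varepsilon)$. This uniformity is already explicitly built into Proposition~\ref{prop:abball} (through the last statement on independence of $t_1$ and $B_0$ of $p\in\mathcal{H}_{[0,1]}$) and into Lemma~\ref{le:spliting} (through the final sentence of its statement), so no additional work is required. The higher iterations $i\geq 1$ of Lemma~\ref{le:spliting} are not needed at this stage; they will enter only later, when bootstrapping the attractor up to $\mathcal{E}_1$-regularity.
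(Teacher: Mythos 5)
Your proposal is correct and follows essentially the same route as the paper: absorb into $B_0$ via Proposition~\ref{prop:abball}, apply Lemma~\ref{le:spliting} at the first level to split the trajectory into an exponentially decaying part and a part bounded in $\mathcal{E}_{\alpha_1}$, and conclude by the compactness of the embedding $\mathcal{E}_{\alpha_1}\hookrightarrow\mathcal{E}_0$. The only cosmetic difference is that you verify the sequential characterization of uniform asymptotic compactness, while the paper directly exhibits the compact uniformly attracting set $B_{\alpha_1}$; these are equivalent here.
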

\begin{proof}
Let $B_0$ be an absorbing set from Proposition \ref{prop:abball}. Then for every bounded set $B\subset \mathcal{E}$ there exist $t_0$ such that for every $t\geq t_0$ and every $p_\varepsilon\in \mathcal{H}(f_\varepsilon)$ there holds  $\varphi_\varepsilon(t,p_\varepsilon)\in B_0$. From the previous lemma
there exists the set $B_{\alpha_{1}}\subset \mathcal{E}_{\alpha_1}$ which is compact in $\mathcal{E}_0$ such that
$$
\lim_{t\to \infty}
\sup_{p_\varepsilon\in \mathcal{H}(f_\varepsilon)} 
\dist(\varphi(t,p)B,B_{\alpha_1}) =0,
$$
which shows that the non-autonomous dynamical system $(\varphi_\epsilon,\theta)$ is uniformly asymptotically compact.
\end{proof}
We are in position to formulate the main result of this section, the theorem on non-autonomous attractors. 
\begin{theorem}\label{thm:main}
For every $\varepsilon\in[0,1]$ problem \eqref{eq:prblm} has uniform $\mathcal{A}_\varepsilon$, cocycle $\{\mathcal{A}_\varepsilon(p)\}_
{p\in\mathcal{H}(f_\epsilon)}$ and pullback attractors which are bounded in $\mathcal{E}_1$ uniformly with respect to $\varepsilon$. Moreover there holds
$$
\mathcal{A}_\varepsilon=
\bigcup_{p\in\mathcal{H}(f_\epsilon)}
\mathcal{A}_\varepsilon(p)
$$

\begin{proof}
Because $(\varphi_\epsilon,\theta)$ is asymptotically compact, from Theorem \ref{NoAtr-Rel} we get existence of uniform and cocycle attractors and the relation between them. For $(u_0,u_1) \in \mathcal{A}_\varepsilon$ by Theorem \ref{lifted-inv} there exists the global solution $u(t)$ with $(u(0),u_t(0)) = (u_0,u_1)$.
If $\mathcal{A}_\varepsilon$ is bounded in $\mathcal{E}_{\alpha_i}$ then from Lemma \ref{le:spliting}  we can split this solution into the sum $u(t)=v^n(t)+w^n(t)$ for $t\in [-n,\infty)$ such that
$$
\text
{
$\norm{(v^n(t),v^n_t(t))}_{\mathcal{E}_{\alpha_{i}}}\leq C e^{-(t+n)\alpha}$ 
and 
$\norm{(w^n(t),w^n_t(t))}_{\mathcal{E}_{\alpha_{i+1}}} \leq C$.
}
$$
Then, for the subsequence, there holds $w^n(0) \to w$ and $v^n(0) \to 0$ as $n\to \infty$ for some $w\in \mathcal{E}_{\alpha_{i+1}}$, so $w = (u_0,u_1)$. Because $\mathcal{A_\varepsilon}$ is bounded in $\mathcal{E}_0$  in finite number of steps we obtain the boudedness of the uniform attractors in $\mathcal{E}_1$. Moreover, due to  Proposition \ref{prop:abball} and Lemma \ref{le:spliting} the $\mathcal{E}_1$ bound of these attractors does not depend on $\varepsilon$.

\end{proof}

\end{theorem}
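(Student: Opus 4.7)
The plan is to split the proof into an abstract step that produces the three attractors with the union formula, and a bootstrap step that upgrades the regularity of $\mathcal{A}_\varepsilon$ from $\mathcal{E}_0$ all the way up to $\mathcal{E}_1$ uniformly in $\varepsilon$.

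For the abstract step I would apply Theorem \ref{NoAtr-Rel} directly to the NDS $(\varphi_\varepsilon,\theta)$: the bounded absorbing set $B_0$ is supplied by Proposition \ref{prop:abball}, uniform asymptotic compactness has just been established, the driving semigroup $\theta$ on the compact hull $\mathcal{H}(f_\varepsilon)$ is continuous, and the continuity of the cocycle itself is Proposition \ref{eq:propcont}. These hypotheses immediately deliver the existence of the uniform attractor $\mathcal{A}_\varepsilon$, of the cocycle (hence pullback) family $\{\mathcal{A}_\varepsilon(p)\}_{p\in\mathcal{H}(f_\varepsilon)}$, and the union relation $\mathcal{A}_\varepsilon = \bigcup_{p\in\mathcal{H}(f_\varepsilon)} \mathcal{A}_\varepsilon(p)$.

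Next I would bootstrap regularity along the finite sequence $0=\alpha_0<\alpha_1<\dots<\alpha_k=1$ produced in Lemma \ref{le:spliting}. Arguing by induction on $i\in\{0,\ldots,k-1\}$ I would show that $\mathcal{A}_\varepsilon$ is bounded in $\mathcal{E}_{\alpha_{i+1}}$, with a bound independent of $\varepsilon\in[0,1]$. The base case $i=0$ is clear from $\mathcal{A}_\varepsilon\subset B_0$. For the inductive step, fix $(u_0,u_1)\in\mathcal{A}_\varepsilon$; by the union formula and the lifted invariance (Theorem \ref{lifted-inv}) there is a complete trajectory $u:\mathbb{R}\to \mathcal{A}_\varepsilon$ with $(u(0),u_t(0))=(u_0,u_1)$, uniformly bounded in $\mathcal{E}_{\alpha_i}$ by the inductive hypothesis. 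For each $n\in\mathbb{N}$, apply Lemma \ref{le:spliting} on $[-n,\infty)$ with initial datum $(u(-n),u_t(-n))$ to obtain a splitting $u=v^n+w^n$ with
$$
\norm{(v^n(t),v^n_t(t))}_{\mathcal{E}_{\alpha_i}} \leq C e^{-\alpha(t+n)} R,\qquad \norm{(w^n(t),w^n_t(t))}_{\mathcal{E}_{\alpha_{i+1}}} \leq C_R,
$$
where $R$, $C$, $C_R$, $\alpha$ are independent of $n$ and of $\varepsilon$. Evaluating at $t=0$, the $v^n(0)$ term goes to zero in $\mathcal{E}_{\alpha_i}$ and the $w^n(0)$ term is bounded in $\mathcal{E}_{\alpha_{i+1}}$; extracting a weakly convergent subsequence and using weak lower semicontinuity of the norm together with uniqueness of the limit in $\mathcal{E}_0$, I would conclude $\norm{(u_0,u_1)}_{\mathcal{E}_{\alpha_{i+1}}} \leq C_R$. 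After $k$ iterations this yields the desired $\mathcal{E}_1$-bound.

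The delicate point is to propagate the uniformity correctly along the induction: at each level $i$ the constant $R$ entering Lemma \ref{le:spliting} must bound $\norm{(u(t),u_t(t))}_{\mathcal{E}_{\alpha_i}}$ uniformly in $t\in\mathbb{R}$, in $\varepsilon\in[0,1]$, and in the choice of symbol $p\in\mathcal{H}(f_\varepsilon)\subset\mathcal{H}_{[0,1]}$ used to construct the complete trajectory. This uniformity is available because the absorbing set in Proposition \ref{prop:abball} and the splitting constants in Lemma \ref{le:spliting} are independent of $p\in\mathcal{H}_{[0,1]}$, and because at each inductive step the uniform attractor is strictly invariant under the whole family of shifted cocycles. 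Combining the induction with the abstract existence results completes the proof.
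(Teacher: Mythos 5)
Your proposal is correct and follows essentially the same route as the paper: the abstract existence and union formula come from Theorem \ref{NoAtr-Rel}, and the $\mathcal{E}_1$-bound is obtained by bootstrapping along the sequence $\alpha_0,\dots,\alpha_k$ of Lemma \ref{le:spliting}, applying the splitting on $[-n,\infty)$ to a complete trajectory supplied by Theorem \ref{lifted-inv} and letting $n\to\infty$. Your explicit treatment of the weak limit of $w^n(0)$ and of the uniformity in $p\in\mathcal{H}_{[0,1]}$ and $\varepsilon$ is, if anything, slightly more careful than the paper's.
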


\section{Upper semicontinuous convergence of attractors.}\label{sec:7}
The paper is concluded with the result on upper-semicontinuous convergence of attractors.
\begin{theorem}
 The family of uniform attractors $\{\mathcal{A}_\varepsilon\}_{\varepsilon\in [0,1]}$ for the considered non-autonomous dynamical system $(\varepsilon,\theta_t)$ is upper semi-continuous in Kuratowski and Hausdorff sense in $\mathcal{E}_0$ as $\varepsilon \to 0$.
\end{theorem}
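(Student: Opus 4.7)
My plan is to combine the uniform $\mathcal{E}_1$-bound of the attractors established in Theorem \ref{thm:main} with a trajectory-capturing argument in the spirit of \cite{Carvalho-Langa-Robinson-2012, Mirelson-Kalita-Langa}. The argument splits into a compactness step, a complete-trajectory extraction, and a limit passage in the equation.

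First I would observe that the set $\mathcal{K} := \bigcup_{\varepsilon \in [0,1]} \mathcal{A}_\varepsilon$ is bounded in $\mathcal{E}_1$ uniformly in $\varepsilon$ by Theorem \ref{thm:main}, and the embedding $\mathcal{E}_1 \hookrightarrow \mathcal{E}_0$ is compact, so $\mathcal{K}$ is relatively compact in $\mathcal{E}_0$. Assume for contradiction that the upper semicontinuity fails: there exist $\delta > 0$, $\varepsilon_n \to 0$, and $a_n \in \mathcal{A}_{\varepsilon_n}$ with $\dist_{\mathcal{E}_0}(a_n, \mathcal{A}_0) \geq \delta$. Compactness yields a subsequence $a_n \to a_0$ in $\mathcal{E}_0$, and it suffices to show $a_0 \in \mathcal{A}_0$.

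Next I would use the representation $\mathcal{A}_{\varepsilon_n} = \bigcup_{p \in \mathcal{H}(f_{\varepsilon_n})} \mathcal{A}_{\varepsilon_n}(p)$ from Theorem \ref{thm:main} together with lifted invariance (Theorem \ref{lifted-inv}) to pick, for each $n$, a symbol $p_n \in \mathcal{H}(f_{\varepsilon_n})$ and a complete bounded trajectory $u_n : \R \to \mathcal{E}_0$ of $\varphi_{\varepsilon_n}$ with symbol $p_n$, satisfying $u_n(0) = a_n$ and $(u_n(t), \partial_t u_n(t)) \in \mathcal{K}$ for all $t \in \R$. By Proposition \ref{prop:p} and the last proposition of Section \ref{sec:3}, the symbols satisfy $p_n \to f_0$ uniformly on $\R \times [-R,R]$ for every $R > 0$. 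The uniform $\mathcal{E}_1$ bound combined with the equation gives equicontinuity of $(u_n, \partial_t u_n)$ into $\mathcal{E}_0$ on compact time intervals; an Arzel\`a--Ascoli and diagonal extraction then produces a subsequence with $(u_n, \partial_t u_n) \to (u, \partial_t u)$ in $C_{loc}(\R; \mathcal{E}_0)$, where $u$ is bounded in $\mathcal{E}_1$ uniformly in $t$ and $u(0) = a_0$.

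Finally I would pass to the limit in the weak form of $\partial_{tt} u_n + \partial_t u_n - \Delta u_n = p_n(t, u_n)$. Linear terms pass by weak-$*$ compactness, and for the nonlinear term I would combine the strong $\mathcal{E}_0$-convergence (hence strong $L^{6-\kappa}_{loc}$-convergence through compact Sobolev embedding), the uniform subquintic growth bound for $\mathcal{H}_{[0,1]}$ from Proposition \ref{prop:p}, and the uniform convergence $p_n \to f_0$ on bounded sets, through an estimate analogous to \eqref{eq:conv_a}. The uniform $L^4_{loc}(L^{12})$ bound from Lemma \ref{lem:savostianov} survives in the limit, so $u$ is a Shatah--Struwe solution of $u_{tt} + u_t - \Delta u = f_0(u)$ and hence a bounded complete trajectory of $\varphi_0$, so its image lies in $\mathcal{A}_0$. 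In particular $a_0 = u(0) \in \mathcal{A}_0$, contradicting $\dist_{\mathcal{E}_0}(a_n, \mathcal{A}_0) \geq \delta$.

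The hard part will be the nonlinear limit: one must handle simultaneously a varying nonlinearity $p_n$ that converges only uniformly on bounded sets (not in any strong integral sense in time) and a subquintic term of growth $|u|^{5-\kappa}$ applied to solutions that converge strongly only in subcritical $L^p$-spaces. The $\mathcal{E}_1$-smoothness of the attractors from Section \ref{sec:6} is decisive here, as it upgrades the $\mathcal{E}_0$-strong convergence of the trajectories to strong convergence in sufficiently high $L^p$ spaces to absorb the subquintic term and close the passage to the limit.
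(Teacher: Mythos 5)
Your proposal is correct and follows essentially the same route as the paper: uniform $\mathcal{E}_1$-boundedness of the attractors giving relative compactness in $\mathcal{E}_0$, lifted invariance to extract complete bounded trajectories with symbols $p_n$, passage to the limit in the nonlinear term using the symbol convergence on bounded sets together with the $\mathcal{E}_1$-regularity (the paper uses $H^2\hookrightarrow C^0$ to make the nonlinearity bounded, which is the same idea as your high-$L^p$ upgrade), and identification of the limit as a bounded complete orbit of $\varphi_0$. The only cosmetic differences are your contradiction framing aimed directly at the Hausdorff distance and the use of Arzel\`a--Ascoli in place of the paper's weak-$*$ compactness argument followed by the Kuratowski-to-Hausdorff lemma.
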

\begin{proof}
Let $(u_0^n,u_1^n)\in \mathcal{A}_{\varepsilon_n} $ such that
$(u_0^n,u_1^n)\to (u_0,u_1)$ in $\mathcal{E}_0$. There exist $p_{\varepsilon_n} \in\mathcal{H}_{[0,1]}$ such that there exist global solution $u_n(t,x)$ to problem
$$
\begin{cases}
u^n_{tt}+u^n_t-\Delta u^n = p_{\varepsilon_n}(t,u),
\\u^n(t,x) = 0\; \text{for}\; x\in \partial \Omega,
\\u^n(0,x) = u^n_0(x),
\\u^n_t(0,x) = u^n_1(x).
\end{cases}
$$
As in the proof of Proposition \ref{eq:propcont} it follows that for every $T$ there exist $v\in L^\infty(T,-T;H^1_0)$ with 
$v_t\in L^\infty(T,-T;H^1_0), v_{tt}\in  L^\infty(T,-T;H^{-1})$ and $v\in L^4(-T,T;L^{12})$   such that for the subsequence of $u^n$ there hold the convergences
\begin{align*}
	 & u^n \to v \ \ \textrm{weakly-*}\ \ \textrm{in}\ \ L^\infty(-T,T;H^1_0),\\
	 & u^n_t \to v_t  \ \ \textrm{weakly-*}\ \ \textrm{in}\ \ L^\infty(-T,T;L^2),\\
	  & u^n_{tt} \to v_{tt} \ \ \textrm{weakly-*}\ \ \textrm{in}\ \ L^\infty(-T,T;H^{-1}).	
\end{align*} 
Moreover $(u^n(t),u^n_t(t))\to (v(t),v_t(t))$ weakly in $\mathcal{E}_0$ for every $t\in[-T,T]$ which implies that $(v(0),v_t(0)) = (u_0,u_1)$ and $u^n(t)\to v(t)$ strongly in $L^2$. We will show that $v$ is a weak solution for the autonomous problem, i.e., the problem with $\varepsilon = 0$. It is enough to show that for every 
$w\in L^2(-T,T;H^1_0)$ there holds
$$
\lim_{n\to \infty}\int_{-T}^T (p_{\varepsilon_n}(u^n(t) - f_0(v^n(t)),w(t)) \, dt
=0
$$ 
Let observe that $\norm{u_n(t)}_{C^0}\leq R$ and $\norm{v(t)}_{C^0}\leq R$ due to the fact that all attractors are bounded uniformly in $\mathcal{E}_1$ and the Sobolev embedding $H^2\hookrightarrow C^0$. Hence
\begin{align*}
  & \left| \int_{-T}^T (p_{\varepsilon_n}(u^n(t),t)-f_0(v(t)),w(t))dt \right|\\
& \qquad \leq
\int_{-T}^T|( p_{\varepsilon_n}(u^n(t),t) -f_0(u^n(t)),w(t))|dt +
\int_{-T}^T|( f_0(u^n(t)) -f_0(v(t)),w(t))|dt
\\ 
&\qquad \leq 
\sup_{t\in \mathbb{R}}\sup_{|s|\leq R} 
|( p_{\varepsilon_n}(s,t) -f_0(s))| \norm {w}_{L^1(-T,T;L^2)}
\\
& \qquad \qquad +\sup_{|s|\leq R} |f'_0(s)|
\left(\int_{-T}^T \norm{u^n(t)-v(t)}^2 dt\right)^{\frac{1}{2}}
\norm {w}_{L^2(-T,T;L^2)}^{\frac{1}{2}}
\end{align*}
Due to \hyperref[as:H2]{(H2)} the first term tends to zero. The second term also tends to zero by the Aubin--Lions lemma.  Hence, $v(t)$ is the weak solution on the  interval $[-T,T]$ with $v(0)=(u_0,u_1)$. By the diagonal argument we can extend $v$ to a global weak solution. Moreover, as $v$ is also the Shatah--Struve solution, it is  unique. Moreover $\norm{v(t)}_{\mathcal{E}_1}\leq C$ due to the uniform boudedness of attractors $\mathcal{A}_\varepsilon$ in $\mathcal{E}_1$. Hence $\{v(t)\}_{t\in \R}$ is a global bounded orbit for the autonomous dynamical system $\varphi_0$ which implies that $(u_0,v_0)\in\mathcal{A}_0 $ and shows the upper semi-continuity in the Kuratowski sense. Because all uniform attractors $\mathcal{A}_\varepsilon$ are uniformly bounded in $\mathcal{E}_1$, their sum $\cup _{\varepsilon\in [0,1]}\mathcal{A}_\varepsilon$ is relatively compact in $\mathcal{E}_0$. So, by Lemma \ref{th:semicontinuity} we have also upper semi-continuity in Hausdorff sense.

\end{proof}

\section{Appendix.}\label{sec:8}
\subsection{Non-autonomous attractors}
The results of this section can be found in \cites{Langa,Kloeden}.
\begin{definition}
  Let $X,\Sigma$ be metric spaces, $\{\theta_t\}_{t \geq 0}$ be a semigroup and $\varphi:\mathbb{R}^+\times \Sigma \to C(X)$ is family of continuous maps on $X$. Let the following conditions hold
  \begin{itemize}
    \item $\varphi(0,\sigma) = \textrm{Id}_X$ for every $\sigma\in\Sigma$.
    \item Map $ \mathbb{R}^+\times \Sigma \ni(t,\sigma)\to \varphi(t,\sigma)x \in X $ is continuous for every $x$.
    \item For every $t,s\geq 0$ and $\sigma \in \Sigma $ there holds the cocycle property
  $$\varphi(t+s,\sigma)=\varphi(t,\theta_s \sigma)\varphi(s, \sigma ) .$$

  \end{itemize}
  Then the pair $(\theta,\varphi)_{(X,\Sigma)}$ is called a non-autonomous dynamical (NDS) and map $\phi$ a cocycle semiflow.
\end{definition}

\begin{definition}
  The set $\mathcal{A}\subset X$ is called uniform attractor for the cocycle $\varphi$ on $\Sigma,X$ if
  $\mathcal{A}$ is smallest compact set such that for every bounded sets $B\subset X$ and $\Upsilon	\subset \Sigma$ for every holds
  $$
  \lim_{t\to \infty}\sup_{\sigma \in \Upsilon }  \dist(\varphi(t,\sigma)B,\mathcal{A})\to 0.
  $$
\end{definition}

\begin{definition}
  Let $(\theta,\varphi)_{(X,\Sigma)}$ be an NDS such that $\theta$ is a group i.e $\Sigma$ is invariant for every  $\theta_t$. Then we call the family of compact sets $\{\mathcal{A}(\sigma) \}_{\sigma\in\Sigma}\subset \mathcal{P}(X)$ a cocycle atractor if  there holds
  \begin{itemize}
    \item $\{\mathcal{A}(\sigma)\}_{\sigma\in \Sigma}$ is invariant under the NDS $(\theta,\varphi)_{(X,\Sigma)}$, i.e.,
    $$
    \varphi(t,\sigma)\mathcal{A}(\sigma) = \mathcal{A}(\theta_t \sigma)
    \;\text{ for every $t\geq0$.}
    $$
    \item  $\{\mathcal{A}(\sigma)\}_{\sigma\in \Sigma}$ pullback attracts all bounded subsets $B\subset X$, i.e.,
    $$
    \lim_{t\to\infty}\dist(\varphi(t,\theta_{-t} \sigma) B,\mathcal{A}(\sigma) )) = 0.
    $$
  \end{itemize}
\end{definition}
\begin{remark}
If for some $\sigma\in\Sigma$ we  consider the mapping $S(t,\tau) =\varphi(t,\sigma)$ for an NDS $(\theta,\varphi)_{(X,\Sigma)}$ then the family of mappings $\{S(t,\tau):t\geq \tau \}$ forms an evolution process. Let $\{\mathcal{A}(\sigma) \}_{\sigma\in\Sigma}$ be a cocycyle atrator for NDS.
Then $\mathcal{A}(t) = \mathcal{A}(\theta_t \sigma)$ is called a pullback atractor for $S(t,\tau)$.
\end{remark}
\begin{definition}
  We say that the NDS $(\theta,\varphi)_{(X,\Sigma)}$ is uniformly asymptotically compact if there exist the compact set $K\subset X$ such that
  $$
    \lim_{t\to \infty}\sup_{\sigma \in \Upsilon }
    \dist(\varphi(t,\sigma)B,K)\to 0 .
  $$
\end{definition}

\begin{theorem}\cite[Theorem 3.12.]{Langa}\label{NoAtr-Rel}
  Suppose that the NDS $(\theta,\varphi)_{(X,\Sigma)}$ is
  uniformly asymptotically compact with $\Sigma$ which is compact and invariant under the flow $\theta$. Then the uniform and cocycle attractors exist and there holds
  $$\bigcup_{\sigma\in\Sigma}\mathcal{A}(\sigma) = \mathcal{A}, $$
  where
  $ \{\mathcal{A}(\sigma)\}_{\sigma \in \Sigma}$  is cocycle atractor and
  $\mathcal{A}$ is uniform atractor.
\end{theorem}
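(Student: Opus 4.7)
The plan is to exploit the uniform asymptotic compactness hypothesis, which produces a compact set $K\subset X$ that uniformly attracts all bounded sets. Standard arguments (cf.\ \cite{Langa,Kloeden}) yield the existence of both attractors together with the explicit $\omega$-limit characterizations
$$
\mathcal{A}(\sigma) = \bigcap_{s\geq 0}\overline{\bigcup_{t\geq s}\varphi(t,\theta_{-t}\sigma)K},\qquad
\mathcal{A} = \bigcap_{s\geq 0}\overline{\bigcup_{t\geq s}\bigcup_{\sigma\in\Sigma}\varphi(t,\sigma)K}.
$$
Equivalently, $x\in\mathcal{A}(\sigma)$ iff there exist $t_n\to\infty$ and $k_n\in K$ with $\varphi(t_n,\theta_{-t_n}\sigma)k_n\to x$, whereas $x\in\mathcal{A}$ iff there exist $t_n\to\infty$, $\sigma_n\in\Sigma$, and $k_n\in K$ with $\varphi(t_n,\sigma_n)k_n\to x$. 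I will then verify the set equality by proving the two inclusions separately.

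\textbf{Easy inclusion $\bigcup_{\sigma\in\Sigma}\mathcal{A}(\sigma)\subseteq\mathcal{A}$.} For $x\in\mathcal{A}(\sigma)$ pick approximating sequences $t_n\to\infty$ and $k_n\in K$ with $\varphi(t_n,\theta_{-t_n}\sigma)k_n\to x$. Set $\sigma_n:=\theta_{-t_n}\sigma$; since $\Sigma$ is $\theta$-invariant (and $\theta$ is a group, so $\theta_{-t_n}$ makes sense), $\sigma_n\in\Sigma$ for every $n$. Then $\varphi(t_n,\sigma_n)k_n\to x$ with $t_n\to\infty$, $\sigma_n\in\Sigma$, $k_n\in K$, and by the characterization of $\mathcal{A}$ above this gives $x\in\mathcal{A}$.

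\textbf{Hard inclusion $\mathcal{A}\subseteq\bigcup_{\sigma\in\Sigma}\mathcal{A}(\sigma)$ and main obstacle.} Take $x\in\mathcal{A}$ with $x=\lim_n \varphi(t_n,\sigma_n)k_n$, $t_n\to\infty$, $\sigma_n\in\Sigma$, $k_n\in K$. Set $\tilde\sigma_n:=\theta_{t_n}\sigma_n\in\Sigma$ (again using invariance of $\Sigma$ under the group $\theta$) and, by compactness of $\Sigma$, extract $\tilde\sigma_{n_j}\to\sigma\in\Sigma$. For each integer $m\geq 1$ and sufficiently large $n$, apply the cocycle identity to decompose
$$
\varphi(t_n,\sigma_n)k_n = \varphi\bigl(m,\theta_{-m}\tilde\sigma_n\bigr) y^m_n,\qquad y^m_n:=\varphi(t_n-m,\sigma_n)k_n.
$$
Since $t_n-m\to\infty$ and $k_n\in K$, uniform asymptotic compactness yields, via a diagonal extraction in $m$, a single subsequence (still denoted $n_j$) along which $y^m_{n_j}\to y_m\in K$ for every $m\in\mathbb N$. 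Continuity of $\theta$ gives $\theta_{-m}\tilde\sigma_{n_j}\to\theta_{-m}\sigma$, and passing to the limit in the displayed identity, using joint continuity of $\varphi$ (which in the concrete application of interest is established by Proposition \ref{eq:propcont}), produces
$$
x=\varphi(m,\theta_{-m}\sigma)y_m\quad\text{for every }m\in\mathbb N.
$$
Taking the bounded set $B:=\{y_m\}_{m\geq 1}\subset K$, the constant sequence $\varphi(m,\theta_{-m}\sigma)y_m=x$ lies in $\varphi(m,\theta_{-m}\sigma)B$ for every $m$, so $x$ belongs to the pullback $\omega$-limit of $B$ with respect to $\sigma$, hence $x\in\mathcal{A}(\sigma)$. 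The principal technical difficulty is the diagonal step: one must arrange a single subsequence that simultaneously realizes all the convergences $\tilde\sigma_{n_j}\to\sigma$ and $y^m_{n_j}\to y_m$ for every $m$, and then promote the two separate continuity properties of the cocycle (in $(t,\sigma)$ with $x$ fixed, and in $x$ with $(t,\sigma)$ fixed) into a joint passage to the limit that closes the identity $x=\varphi(m,\theta_{-m}\sigma)y_m$.
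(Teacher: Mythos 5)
The paper itself offers no proof of this statement --- it is quoted from \cite{Langa}, where the argument runs through the associated skew-product semiflow $\Pi(t)(x,\sigma)=(\varphi(t,\sigma)x,\theta_t\sigma)$ on $X\times\Sigma$: uniform asymptotic compactness together with compactness and invariance of $\Sigma$ yield a global attractor $\mathbb{A}$ for $\Pi$, the uniform attractor is the projection of $\mathbb{A}$ onto $X$, the cocycle attractor consists of the sections $\mathcal{A}(\sigma)=\{x:(x,\sigma)\in\mathbb{A}\}$, and the union formula is then immediate. Your direct $\omega$-limit argument is a genuinely different route and is essentially sound: the easy inclusion is correct, and the decomposition $\varphi(t_n,\sigma_n)k_n=\varphi(m,\theta_{-m}\tilde\sigma_n)y^m_n$ with a diagonal extraction is exactly the right device to manufacture the limiting symbol $\sigma$ and the points $y_m$ realizing $x$ in the pullback $\omega$-limit for $\sigma$. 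What the skew-product route buys is that all of this bookkeeping is absorbed into the standard theory of global attractors for semigroups on the product space; what your route buys is that it never mentions the product space and makes the role of the hypotheses visible.

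The one genuine issue is the limit passage $\varphi(m,\theta_{-m}\tilde\sigma_{n_j})y^m_{n_j}\to\varphi(m,\theta_{-m}\sigma)y_m$, which you name as the principal difficulty but do not close. Splitting the difference as
$\bigl[\varphi(m,\theta_{-m}\tilde\sigma_{n_j})y^m_{n_j}-\varphi(m,\theta_{-m}\tilde\sigma_{n_j})y_m\bigr]+\bigl[\varphi(m,\theta_{-m}\tilde\sigma_{n_j})y_m-\varphi(m,\theta_{-m}\sigma)y_m\bigr]$,
the second bracket vanishes by continuity in $\sigma$ for fixed $x$, but the first requires equicontinuity of the family $\{\varphi(m,\theta_{-m}\tilde\sigma_{n_j})\}_j$ at $y_m$, which does not follow from the two \emph{separate} continuity properties in the paper's Definition of an NDS (continuity in $x$ for each fixed $(t,\sigma)$, and continuity in $(t,\sigma)$ for each fixed $x$). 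One needs joint continuity of $(\sigma,x)\mapsto\varphi(t,\sigma)x$, or at least continuity in $\sigma$ uniform for $x$ in compact sets. This is not a removable technicality: it is precisely the hypothesis that makes $\Pi$ a semigroup in the skew-product formulation of \cite{Langa}, and it is what Proposition \ref{eq:propcont} actually verifies for the wave-equation cocycle ($t_n\to t$, $p^n_\varepsilon\to p_\varepsilon$, and $(u_0^n,u_1^n)\to(u_0,u_1)$ simultaneously). Your proof becomes complete once joint continuity is added to the standing assumptions, as it implicitly is in the cited source; under the paper's literal definition of NDS this step remains a gap.
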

\begin{definition}
    Let $(\theta,\varphi)_{(X,\Sigma)}$ be an NDS such that $\theta$ is a group. We call $\xi:\mathbb{R}\to X$ a global solution through $x$ and $\sigma$ if, for all $t\geq s$ it satisfies
    $$
    \varphi(t-s,\theta_s \sigma)\xi(s) = \xi(t)
    \text{ and }
    \xi(0) = x.
    $$
\end{definition}
\begin{definition}\label{lifted}
    Let $(\theta,\varphi)_{(X,\Sigma)}$ be an NDS. We say that a subset $\mathcal{M}\subset X$ is lifted-invariant if for each $x\in \mathcal{M}$ there exist $\sigma$ and bounded global solution $\xi:\mathbb{R}\to X$ through $x$ and $\sigma$.
\end{definition}
\begin{theorem}\cite[Proposition 3.21]{Langa}\label{lifted-inv}
  Suppose that the NDS $(\theta,\varphi)_{(X,\Sigma)}$ is
  uniformly asymptotically compact with $\Sigma$ which is compact and invariant under flow $\theta$. Then the uniform attractor $\mathcal{A}$ is the maximal bounded lifted-invariant set of the NDS $(\theta,\varphi)_{(X,\Sigma)}$.
\end{theorem}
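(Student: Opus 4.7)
My plan is to establish two inclusions separately. First, I would show that $\mathcal{A}$ itself is bounded and lifted-invariant; second, that any bounded lifted-invariant set $\mathcal{M}$ must be contained in $\mathcal{A}$. The bulk of the work lies in the first part; the second is essentially a direct application of the uniform attraction property.

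For lifted-invariance of $\mathcal{A}$, I would invoke Theorem~\ref{NoAtr-Rel} to write $\mathcal{A}=\bigcup_{\sigma\in\Sigma}\mathcal{A}(\sigma)$, with the cocycle attractor satisfying the strict invariance $\varphi(t,\sigma)\mathcal{A}(\sigma)=\mathcal{A}(\theta_t\sigma)$. Given $x\in\mathcal{A}$, pick $\sigma\in\Sigma$ with $x\in\mathcal{A}(\sigma)$. Forward in time the trajectory $t\mapsto\varphi(t,\sigma)x$ already remains in $\bigcup_{t\geq 0}\mathcal{A}(\theta_t\sigma)\subset\mathcal{A}$. To extend backward, strict invariance provides, for every integer $n\geq 1$, a point $x_{-n}\in\mathcal{A}(\theta_{-n}\sigma)$ with $\varphi(n,\theta_{-n}\sigma)x_{-n}=x$; then $\xi_n(t):=\varphi(t+n,\theta_{-n}\sigma)x_{-n}$, defined for $t\geq -n$, is a partial trajectory passing through $x$ at $t=0$ and lying entirely in the compact set $\mathcal{A}$. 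A diagonal extraction over the integer times $-1,-2,\ldots$ then produces a subsequence (still denoted $\xi_n$) converging pointwise on $\mathbb{Z}_{\leq 0}$ to a limit $\xi$; using the joint continuity of $\varphi$ and the compactness of $\Sigma$, one propagates the limit to all of $\mathbb{R}$ and verifies the cocycle identity $\varphi(t-s,\theta_s\sigma)\xi(s)=\xi(t)$, giving the required bounded global solution through $x$ and $\sigma$ in the sense of Definition~\ref{lifted}. Since $\mathcal{A}$ is compact, it is bounded, and lifted-invariance follows.

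For maximality, let $\mathcal{M}\subset X$ be any bounded lifted-invariant set and fix $x\in\mathcal{M}$. By Definition~\ref{lifted} there exist $\sigma\in\Sigma$ and a bounded global solution $\xi:\mathbb{R}\to X$ through $x$ and $\sigma$. The set $B:=\{\xi(-t):t\geq 0\}\subset X$ is bounded, and for every $t\geq 0$ the identity $x=\varphi(t,\theta_{-t}\sigma)\xi(-t)$ yields
\[
\dist(x,\mathcal{A})\leq \sup_{\sigma'\in\Sigma}\dist(\varphi(t,\sigma')B,\mathcal{A}).
\]
The defining uniform attraction of bounded sets by $\mathcal{A}$ drives the right-hand side to zero as $t\to\infty$; since $\mathcal{A}$ is closed, $x\in\mathcal{A}$, hence $\mathcal{M}\subset\mathcal{A}$.

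The hard part is the diagonal construction that assembles the partial trajectories $\{\xi_n\}$ into a single continuous global solution. One must check that the pointwise limit on a countable dense set of times extends continuously to all of $\mathbb{R}$ and is compatible with the cocycle semigroup law; this requires using the joint continuity of $(t,\sigma,x)\mapsto\varphi(t,\sigma)x$ postulated in the NDS definition together with compactness of $\Sigma$ so that the symbols $\theta_s\sigma$ stay in a compact set and one can pass to the limit uniformly on compact time intervals. Everything else reduces to bookkeeping with the invariance of the cocycle attractor and the uniform attraction property.
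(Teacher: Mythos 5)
The paper does not prove this statement at all: it is quoted verbatim from \cite[Proposition 3.21]{Langa} and used as a black box, so there is no internal proof to compare against. Your argument is a correct, self-contained reconstruction along the standard lines of the cited reference. Both halves are sound: the maximality half is exactly the right one-line use of the uniform attraction property applied to the bounded set $B=\{\xi(-t):t\geq 0\}$ and $\Upsilon=\Sigma$ (legitimate since $\Sigma$ is compact, hence bounded, and invariant so that $\theta_{-t}\sigma\in\Sigma$), combined with closedness of $\mathcal{A}$. For the lifted-invariance half, the backward extension via strict invariance of the cocycle attractor and a diagonal extraction is the right mechanism; two small points worth making explicit if you write it out in full are (i) the diagonal extraction works because each fiber $\mathcal{A}(\theta_{-n}\sigma)$ is compact, so the sequences $\{\xi_m(-n)\}_{m\geq n}$ have convergent subsequences, and (ii) the consistency of the limits at integer times, $\varphi(1,\theta_{-n}\sigma)\xi(-n)=\xi(-n+1)$, needs only the continuity in $x$ of each fixed map $\varphi(1,\theta_{-n}\sigma)$, after which the cocycle law lets you interpolate to all real times and verify $\varphi(t-s,\theta_s\sigma)\xi(s)=\xi(t)$ without any further compactness. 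Also note that your limit trajectory lies in $\bigcup_{t\in\mathbb{R}}\mathcal{A}(\theta_t\sigma)\subset\mathcal{A}$, which is compact, so boundedness is automatic.
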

\subsection{Upper semicontinuity} We recall the definitions of Hausdorff and Kuratowski upper-semicontinuous convergence of sets, and the relation between these conditions.
\begin{definition}
Let $(X,d)$ be a metric space and let $\{A_\varepsilon\}_{\varepsilon\in[0,1]}$ be a family of sets in $X$. We say that this family converges to $A_0$  upper-semicontinuously in Hausdorff sense if
$$
\lim_{\varepsilon\to 0^+}\text{dist}_X(A_\varepsilon,A_0) = 0.
$$

\end{definition}
\begin{definition}
Let $(X,d)$ be a metric space and let $\{A_\varepsilon\}_{\varepsilon\in[0,1]}$ be a family of sets in $X$. We say that this family converges to $A_0$  upper-semicontinuously in Kuratowski sense if
$$
X-\limsup_{\varepsilon\to 0^+}A_\varepsilon \subset A_0,
$$
where $X-\limsup_{\varepsilon\to 0^+}A_\varepsilon$ is the Kuratowski upper limit defined by 
$$
X-\limsup_{\varepsilon\to 0^+}A_\varepsilon
=\{x\in X:\lim_{\varepsilon_n\to 0}d(x_{\varepsilon_n},x)=0,\;x_{\varepsilon_n}\in A_{\varepsilon_n} \}.
$$
\end{definition}
The proof of the next result can be found for example in \cite[Proposition 4.7.16]{DMP}.
\begin{lemma}\label{th:semicontinuity} 
Assume that the sets $\{A_\varepsilon \}_{\varepsilon\in[0,1]}$ are nonempty and closed and the set $\cup_{\varepsilon\in[0,1]}A_{\varepsilon}$ is relatively compact. Then if the family $\{A_\varepsilon\}_{\varepsilon\in[0,1]}$ converges to $A_0$ upper-semicontinuously in Kuratowski sense then $\{A_\varepsilon\}_{\varepsilon\in[0,1]}$ converges to $A_0$ upper-semicontinuously in Hausdorff sense.
\end{lemma}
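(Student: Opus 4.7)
The plan is to prove the stated lemma --- that Kuratowski upper semicontinuity, together with relative compactness of the union, implies Hausdorff upper semicontinuity --- by a short compactness and contradiction argument. The strategy has three moves: negate Hausdorff convergence to extract a badly behaved sequence of points, extract a convergent subsequence via relative compactness, and then invoke the Kuratowski hypothesis to place the limit point in $A_0$, contradicting the separation bound.

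Suppose, for contradiction, that $\dist_X(A_\varepsilon,A_0)\not\to 0$ as $\varepsilon\to 0^+$. Then there exist $\delta>0$, a sequence $\varepsilon_n\to 0^+$, and points $x_n\in A_{\varepsilon_n}$ such that $\inf_{a\in A_0}d(x_n,a)>\delta$ for every $n$. Because every $x_n$ lies in $\bigcup_{\varepsilon\in[0,1]}A_\varepsilon$, which is relatively compact by hypothesis, we may extract a subsequence (not relabeled) along which $x_n\to x_\star$ for some $x_\star\in X$. By construction $x_n\in A_{\varepsilon_n}$ with $\varepsilon_n\to 0^+$ and $d(x_n,x_\star)\to 0$, so $x_\star$ lies in $X\text{-}\limsup_{\varepsilon\to 0^+}A_\varepsilon$ in the sense of the definition given in the excerpt. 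The Kuratowski upper semicontinuity hypothesis then yields $x_\star\in A_0$. But then
$$
\delta<\inf_{a\in A_0}d(x_n,a)\leq d(x_n,x_\star)\longrightarrow 0,
$$
which is the desired contradiction, so Hausdorff upper semicontinuity must hold.

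The argument carries no substantive obstacle; the only point requiring care is aligning the definition of the Kuratowski upper limit given in the excerpt --- namely that $x$ lies in it iff it is the limit of some sequence $x_{\varepsilon_n}\in A_{\varepsilon_n}$ with $\varepsilon_n\to 0^+$ --- with the form of sequence produced by the compactness extraction. The nonemptiness and closedness hypotheses on the $A_\varepsilon$ play no role beyond ensuring that the objects under discussion are well defined, and in particular closedness of $A_0$ is not separately invoked because we exploit only the direct distance bound $\inf_{a\in A_0}d(x_n,a)\leq d(x_n,x_\star)$ once $x_\star\in A_0$ is established.
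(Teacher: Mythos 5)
Your proof is correct; the paper itself gives no proof of this lemma, deferring instead to \cite[Proposition 4.7.16]{DMP}, and your compactness-plus-contradiction argument is precisely the standard one for this fact. The only detail worth noting is that the negation of Hausdorff upper semicontinuity gives $\sup_{x\in A_{\varepsilon_n}}\inf_{a\in A_0}d(x,a)>\delta$ along some sequence $\varepsilon_n\to 0^+$, from which the existence of the points $x_n$ with $\inf_{a\in A_0}d(x_n,a)>\delta$ follows immediately, as you use.
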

\section{References.}
\begin{biblist}
\bib{Antil}{article}{
	author={Antil, H.},
	author={Pfefferer, J.},
	author={Rogovs, S.},
	title={Fractional operators with inhomogeneous boundary conditions: analysis, control, and discretization},
	journal={Communications in Mathematical Sciences},
	date={2018},
	volume={16},
	pages={1395--1426},
}

\bib{Arrieta-Carvalho-Hale-1992}{article}{
	title={A damped hyperbolic equation with critical exponent},
	journal={Comm. Partial Differential
	Equations},
	volume={17},
	pages={841--866},
	year={1992},
	author={Arrieta, J.M.},
	author={Carvalho, A.N.},
	author={Hale, J.K.}
}

\bib{Babin-Vishik-1983}{article}{
	title={Regular attractors of semi-groups and
	evolution equations},
	journal={J. Math. Pures et Appl.},
	volume={62},
	pages={441--491},
	year={1983},
	author={Babin, A.V.},
	author={Vishik, M.I.}
}

\bib{Babin_Vishik}{book}{
	title={Attractors of Evolution Equations},
	series={Studies in Mathematics and its Applications},
	volume={25},
	place={Amsterdam},
	publisher={North-Holland Publishing Co.},
	date={1992},
	author={Babin, A.V.},
	author={Vishik, M.I.}
}
	\bib{balibrea}{article}{
			author={Balibrea, F.},
			author={Caraballo, T.},
			author={Kloeden, P.E.},
			author={Valero, J.},
			title={Recent developments in dynamical systems: three perspectives},
			journal={Int. J. Bifurcat. Chaos},
			volume={20},
			date={2010},
			pages={2591--2636},
		}
		
\bib{Blair}{article} {
author={Blair, M.},
author={Smith, H.},
author={Sogge, C.},
title={Strichartz estimates for the wave equation on
manifolds with boundary},
journal={Ann. I. H. Poincar\'{e}-AN},
volume={26},
pages={1817--1829},
year={2009}
}

\bib{Langa}{article}{
	author={Bortolan, M.C},
	author={Carvalho, A.N.},
	author={Langa, J.A.},
	title={Structure of attractors for skew product semiflows},
	journal={Journal of Differential Equations},
	date={2014},
	volume={257(2)},
	pages={490–522},
}

\bib{BoCaL}{book}{
author={Bortolan, M.C},
	author={Carvalho, A.N.},
	author={Langa, J.A.},
	title={Attractors Under Autonomous and Non-autonomous Perturbations},
publisher={American Mathematical Society},	date={2020},
series={Mathematical Surveys and Monographs
},
volume={246},
place={Providence, Rhode Island}
}

\bib{Burq}{article}{ 
author={Burq, N.},
author={Lebeau, G.},
author={Planchon, F.},
title={Global existence for energy critical waves in 3D domains}, 
journal={J. AMS},
volume={21},
pages={831--845},
year={2008}
}

\bib{Carvalho-Chol-Dlot-2009}{article}{
	title={Damped wave equations with fast growing dissipative nonlinearities},
	journal={Discrete and Cont. Dyn. Systems-A},
	volume={24},
	pages={1147--1165},
	year={2009},
		author={Carvalho, A.N.},
	author={Cholewa, J.W.},
	author={D\l{}otko, T.}
}

\bib{Carvalho-Langa-Robinson-2012}{book}{
	title={Attractors for infinite-dimensional non-autonomous dynamical systems},
	series={Applied Mathematical Series},
	volume={182},
	address={New York},
	publisher={Springer},
	year={2013},
	author={Carvalho, A.N.},
	author={Langa, J.A.},
	author={Robinson, J.C.}
}

\bib{clsz}{article}{
title={Deterministic and random attractors for a wave equation with sign changing damping}, 
author={Chang, Q.},
author={Li, D.},
author={Sun, C.},
author={Zelik, S.},
journal={arXiv:1910.02430},
}

\bib{Cheban}{book}{
	title={Global attractors of non-autonomous dissipative dynamical systems},
	publisher={World Scientific},
	year={2004},	
	author={Cheban, D.N.}	
}

\bib{Chepyzhov-2013}{article}{
	title={Uniform attractors of dynamical processes and
	non-autonomous equations of mathematical physics},	
	journal={Russian Mathematical Surveys},	
	volume={68},
	pages={159--196},
	year={2013},	
	author={Chepyzhov. V.V.}
}

\bib{Chepyzhov_Vishik}{book}{
	author={Chepyzhov, V.V.},
	author={Vishik, M.I.},
	title={Attractors for Equations of Mathematical Physics},
	publisher={American Mathematical Society},
	place={Providence, Rhode Island},
	date={2002},
}

\bib{Chueshov_Lasiecka}{book}{
	title={Long-Time Behavior of Second Order Evolution Equations with Nonlinear Damping},
	
	series={Memoirs of the American Mathematical Society},
	volume={912},
	publisher={American Mathematical Society},
	year={2008},
	
	author={Chueshov, I.},
	author={Lasiecka, I.}
}

\bib{DMP}{book}{
	author={Denkowski, Z.},
	author={Mig\'{o}rski, S.},
	author={Papageorgiou, N.S.},
	title={An Introduction to Nonlinear Analysis: Theory},
	publisher={Kluwer Academic Publishers},
	date={2003},
}

\bib{Dlotko_Cholewa}{book}{
	title={Global Attractors in Abstract Parabolic Problems},	
	address={Cambridge},
	publisher={Cambridge University Press},
	year={2000},	
	author={D\l{}otko, T.},
	author={Cholewa, J.W.}
}
\bib{Mirelson-Kalita-Langa}{article}{
    author = {Freitas, M.},
    author = {Kalita, P},
    author = {Langa, J.},
    year = {2017},
    pages = {},
    title = {Continuity of non-autonomous attractors for hyperbolic perturbation of parabolic equations},
    volume = {264},
    journal = {Journal of Differential Equations},
}
\bib{Ghidaglia_Temam}{article}{
	title={Attractors for damped nonlinear hyperbolic equations},
	journal={J. Math. Pures et Appl.},
	volume={66},
	pages={273--319},
	year={1987},
	author={Ghidaglia, J.M.},
	author={Temam. R.}
}

\bib{Hale_1985}{article}{
	title={Asymptotic behavior and dynamics in infinite dimensions},
	author={Hale, J.K.},
	pages={1--42},
	book = {
	title={Nonlinear Differential Equations},
	series={Research Notes in Math.},
	volume={132},
	year={1985},
	publisher={Pitman},
	}
}

\bib{Hale-1988}{book}{
	title={Asymptotic Behavior of Dissipative Systems},
	publisher={American Mathematical Society},
	series={Mathematical Surveys and Monographs},
	volume={25},
	year={1988},
	address={"Providence},
	author={Hale, J.K.}
}

\bib{Haraux1}{article}{
	title={Two remarks on dissipative hyperbolic problems},
    author={Haraux, A.},
	pages={161--179},
    book= {
	title={Nonlinear partial differential equations and their applications, College de France Seminar},
	series={Research Notes in Math.},
	volume={122},
	year={1984},
	publisher={Pitman},
    }
}

\bib{Haraux2}{article}{,
	title={Recent results on semi-linear hyperbolic problems in bounded domains},
	author={Haraux, A.},
	pages={118--126},
	book={
	title={Partial Differential Equations},
	series={Lecture Notes in Math.},
	volume={1324},
	year={1988},
	publisher={Springer},
	address={Berlin, Heidelberg},
    }
}

\bib{Har_book}{book}{
	title={Semi-Linear Hyperbolic Problems in Bounded Domains},
	
	publisher={CRC Press},
	year={1987},
	author={Haraux, A.}
}

\bib{Henry-1981}{book}{
	title={Geometric Theory of Semilinear Parabolic Equations},
	publisher={Springer},
	series={Lecture Notes in Math.},
	volume={840},
	year={1981},
	address={Berlin},
	author={Henry, D.}
}

\bib{Savostianov}{article}{
	author={Kalantarov, V.},
	author={Savostianov, A.},
	author={Zelik, S.},
	title={Attractors for damped quintic wave equations in bounded domains},
	journal={Ann. Henri Poincar\'{e}},
	date={2016},
	volume={17},
	pages={2555--2584},
}

\bib{Kloeden}{book}{
author = {Kloeden, P.E.},
author = {Rasmussen, M.},
title= {Nonautonomous Dynamical Systems},
date = {2010},
publisher = {American Mathematical Society},
place = {Providence, Rhode Island},
}

\bib{LMS}{article}{
title = {Well-posedness and attractors for a super-cubic weakly damped wave equation with $H^{-1}$ source term},
journal = {Journal of Differential Equations},
volume = {263},
pages = {8718--8748},
year = {2017},
author = {Liu, C.},
author = {Meng, F.},
author={Sun, C.}
}

\bib{Mei_Sun}{article}{
title = {Uniform attractors for a weakly damped wave equation with sup-cubic nonlinearity},
journal = {Applied Mathematics Letters},
volume = {95},
pages = {179--185},
year = {2019},
author = {Mei, X.},
author={Sun, C.} 
}

\bib{mcs}{article}{
journal={Discrete and Continuous Dynamical Systems},
volume={41},
year={2021},
pages={569--600},
title={Pullback attractor for a weakly damped wave equation with sup-cubic nonlinearity},
author={Mei, X.},
author={Xiong, Y.},
author={Sun, C.}
}

\bib{Meng_Liu}{article}{
title = {Exponential attractors for weakly damped wave equation with sub-quintic nonlinearity},
journal = {Computers \& Mathematics with Applications},
volume = {78},
pages = {1026--1036},
year = {2019},
author = {Meng, F.},
author={Liu, C.}
}

\bib{PataZelik}{article}{
    author={Pata, V.},
    author={Zelik, S.},
    title={A remark on the damped wave equation},
    journal={Communications on Pure \& Applied Analysis},
    year={2006},
    pages={611--616},
    volume={5}
}

\bib{Robinson}{book}{
	title={Infinite-Dimensional Dynamical Systems},
	publisher={Cambridge University Press},
	place={Cambridge},
	date={2001},
	author={Robinson, J.C.}
}

\bib{Savo2}{article}{
author={Savostianov, A.},
title={Strichartz estimates and smooth attractors for a sub-quintic wave equation with fractional damping in bounded domains}, 
journal={Advances in Differential Equations},
volume={20},
pages={495--530},
year={2015}
}

\bib{Savo1}{article}{
author={Savostianov, A.},
author={Zelik, S.},
title={Smooth attractors for the quintic wave equations with fractional damping},
journal={Asymptotic Analysis},
volume={87},
pages={191--221},
date={2014}
}

\bib{Savostianov_measure}{article}{
author={Savostianov, A.},
author={Zelik, S.},
title={Uniform attractors for measure-driven quintic wave equations},
journal={Russian Mathematical Surveys},
volume={75},
pages={253--320},
year={2020},
}

\bib{Sogge}{article}{
    author = {Smith, H.F.},
    author = {Sogge,  C.D.},
    
    title = {Global {S}trichartz estimates for nonthapping perturbations of
    the {L}aplacian},
    date = {2000},
    journal = {Communications in Partial Differential Equations},
    volume={25},
    pages={2171-2183}
} 

\bib{Tao}{book}{
	author={Tao, T.},
	title={Nonlinear dispersive equations: local and global analysis},
	publisher={CBMS Regional Conference Series in Mathematics},
	date={2006},
}

\bib{Temam}{book}{,
	title={Infinite-Dimensional Dynamical Systems in Mechanics and Physics},
	publisher={Springer},
	place={New York},
	date={1988},
	author={Temam, R.}
}

\end{biblist}
   
\end{document}